\documentclass[11pt,reqno,a4paper]{amsart}
\usepackage{amsfonts}
\usepackage{amsmath, amscd,amssymb,bm,amsbsy,epsf}
    \usepackage{enumerate}
  \usepackage{paralist}
\usepackage{bbm, dsfont}
\usepackage{graphicx,color}
\usepackage{subfigure}
\usepackage{mathrsfs}
\usepackage{verbatim}
\usepackage{inputenc}
\usepackage{bbm}
\usepackage{multicol}
\usepackage{balance}
\usepackage{verbatim}
\usepackage{accents}
\usepackage{enumitem}

\newtheorem{definition}{Definition}

\newtheorem{theorem}{Theorem}
\newtheorem{lemma}[theorem]{Lemma}

\newtheorem{proposition}[theorem]{Proposition}




\def\N{\bm{\mathbb{N} } }
\def\R{\mathbb{R}}






\newcommand{\la}{\lambda}

\newcommand{\red}{\textcolor{red}}

\def\ep{\varepsilon}

\def\NN{\mathcal{N}}
\def\MM{\mathcal{M}}
\numberwithin{equation}{section}

\setlength{\parindent}{0pt}

\title[]{Existence and Morse Index of least energy nodal solution of the $(p,2)$-laplacian}

\author[O. Agudelo]{Oscar Agudelo}
\address{\noindent O. Agudelo - NTIS, Department of Mathematics,
Zapadoceska Univerzita v Plzni, Plzen, Czech Republic.}
\email{oiagudel@ntis.zcu.cz}

\author[D. Restrepo]{Daniel Restrepo}
\address{\noindent D. Restrepo - Department of Mathematics, University of Texas at Austin, Austin TX, United States.}
\email{daerestrepomo@utexas.edu}

\address{\noindent D. Restrepo - Escuela de Matem\'{a}ticas, Universidad Nacional de Colombia Sede Medell\'{i}n, Medell\'{i}n, Colombia.}

\author[C. V\'{e}lez]{Carlos V\'{e}lez}
\address{\noindent C. V\'{e}lez - Escuela de Matem\'{a}ticas, Universidad Nacional de Colombia Sede Medell\'{i}n, Medell\'{i}n, Colombia.}
\email{cauvelez@unal.edu.co}

\begin{document}
\maketitle

\begin{abstract}
In this paper we study the quasilinear equation $- \ep^2 \Delta u-\Delta_p u=f(u)$ in a smooth bounded domain $\Omega$ with Dirichlet boundary condition. For $\ep \geq 0$, we review existence of a least energy nodal solution and then present information about the Morse Index of least nodal energy solutions this BVP. In particular we provide Morse Index information for the case $\ep =0$. 
\end{abstract}
%
%


\section{Introduction}

\label{Introduction}
Consider the {\it boundary value problem} (BVP):
\begin{equation}\label{eq mean problem}\tag{$\mathcal{P}_\varepsilon$}
\left\{
\begin{aligned}
- \ep^2 \Delta u-\Delta_p u=&f(u) &\hbox{in} &\quad \,\,\,\Omega, \,\\
u =& 0     & \hbox{on} & \quad \partial \Omega, 
\end{aligned}
\right.
\end{equation}
where $\ep \in \R$ is a parameter, $\Omega\subset \R^N$, $N \geq 1$, is a smooth bounded domain when $N \geq 2$ and an open bounded interval when $N=1$. We assume that $p\in (2,+ \infty)$ and we denote by $\Delta u:= {\rm div}(\nabla u)$ and $\Delta_p :={\rm div}(|\nabla u|^{p-2}\nabla u)$ the Laplace and the $p$-Laplace operators of a function $u$, respectively.

\medskip
In what follows, we set  
$$
p^*:=\left\{\begin{array}{cc}
\frac{Np}{N-p},& \quad \hbox{if} \quad p\in (2,N),\\
+ \infty,& \quad \,\,\,\,\hbox{if} \quad p \in [N, \infty) 
\end{array}
\right.
$$ 
and notice that $p\in (2,p^*)$.

\medskip
As for the nonlinearity $f:\R \to \R$, define $F(t):= \int_{0}^t f(s)ds$ for $t\in \R$ and consider the following set of hypotheses. 
\begin{enumerate}[label=({f}{\arabic*})]

\item $f\in C^1(\R)$ and there exist $q\in (p,p^*)$ and $A>0$ such that for every $t\in \R$,
\begin{equation}\label{subcriticalgrowthf}
|f'(t)|\leq A(1+|t|^{q-2}).
\end{equation}

\item  There exist $m\in (p,p^*)$ and $T>0$ such that for every $t\in \R$ with $|t|\geq T$, 
		\begin{equation}\label{p-1superlineargrowth}
		f(t)t \geq m F(t)>0.
		\end{equation}


\item $f'_p(0):= \limsup \limits_{t\to 0}\frac{f(t)}{|t|^{p-2}t} \in (-\infty, \lambda_{1,p})$, where $\lambda_{1,p}>0$ is the first eigenvalue of $-\Delta_{p}$ with homegeneous Dirichlet boundary condition.


\item[(f3)'] $f(0)=0$.
\medskip

\item The function $
\R-\{0\} \ni t \mapsto \frac{f(t)}{|t|^{p-1}}$ is strictly increasing or equivalently, for every $t\in \R-\{0\}$, 
\begin{equation*}
f'(t)>(p-1)\frac{f(t)}{t}.
\end{equation*}
\end{enumerate}

Notice that (f3) implies (f3)'. \\

For any function $v:\Omega \to \R$, define $v^+(x):=\max\{0,v(x)\}$ and $v^-(x):= \min\{0,v(x)\}$ for a.e. $x\in \Omega$ and any connected component of the set $\{v \neq 0\}$ will be called a {\it nodal domain} or {\it nodal region} of the function $v$.

\medskip

The concepts of solutions we will be working with throughout this paper are next defined. Let $\ep \in \R $ and assume that $f$ satisfies hypothesis (f1).  

\begin{definition}\label{solution}
A solution of \eqref{eq mean problem} is a function $u\in W_0^{1,p}(\Omega)$ such that for every $\varphi \in W_0^{1,p} (\Omega)$,
\begin{equation}\label{eq weak mean problem}
\int_{\Omega}\left( \ep^2 + |\nabla {u}|^{p-2}\right)\nabla {u}\cdot\nabla \varphi dx  =\int_{\Omega} f({u})\varphi dx.
\end{equation}

A {\it nodal solution} of \eqref{eq mean problem} is a solution $u$ such that $u^+,u^-\neq 0$ a.e. in $\Omega$.  
\end{definition}

The energy functional $J_{\ep}:W^{1,p}_0(\Omega) \to \R$, associated with \eqref{eq weak mean problem} is defined by
$$
J_{\ep}(v):= \int_{\Omega} \left(\frac{\ep^2}{2}|\nabla v|^2 + \frac{1}{p}|\nabla v|^p - F(v)\right)dx \quad \hbox{for} \quad v\in W^{1,p}_0(\Omega).
$$

Since $p>2$ and $f$ satisfies (f1), $J_{\ep} \in C^2(W^{1,p}_0(\Omega))$  (see for instance \cite{CASTORINAESPOSITOSCIUNZI2009} and \cite{CingolaniVannella2003}) with first derivative
\begin{equation*}
DJ_{\ep}(v)\varphi = \int_{\Omega} \left(\ep^2 + |\nabla v|^{p-2}\right)\nabla v\cdot \nabla \varphi dx - \int_{\Omega} f(v)\varphi dx
\end{equation*}
and second derivative
\begin{multline}\label{2ndDerivJep}
D^2J_{\ep}(v)(\varphi, \psi) = \int_{\Omega} \left(\ep^2 + |\nabla v|^{p-2}\right)\nabla \varphi\cdot \nabla \psi dx \\
+ \int_{\Omega}(p-2)|\nabla v|^{p-4}(\nabla v \cdot \nabla \varphi)(\nabla v\cdot \nabla \psi) dx - \int_{\Omega} f'(v)\varphi \psi dx
\end{multline}
for every $v,\varphi,\psi\in W^{1,p}_0(\Omega)$. Solutions to \eqref{eq mean problem} correspond exactly to the critical points of $J_{\ep}$ in $W^{1,p}_0(\Omega)$.

\medskip
In this work we study existence, qualitative properties and {\it Morse index} computations for least energy nodal solutions of \eqref{eq mean problem}.

\medskip
If $u\in W^{1,p}_0(\Omega)$ is a solution of \eqref{eq mean problem}, its {\it Morse index}, $m_{\ep}(u)$, is defined as the maximal dimension of a linear subspace $E$ of $W^{1,p}_0(\Omega)$ such that 
\begin{equation*}
D^2J_{\ep}(u)(\varphi,\varphi) < 0 \quad \hbox{for} \quad \varphi \in E.
\end{equation*}

Similar questions to the ones addressed in this work were studied in \cite{CASTROCOSSIONEUBERGER1997} in the autonomous case, when $\ep =0$, $p=2$ and under similar hypotheses on the nonlinearity. The non-autonomous case, still for $\varepsilon=0$ and $p=2$, has been treated in \cite{BartschWeth2003} under slightly weaker assumptions.

\medskip
Existence and qualitative properties of nodal solutions to \eqref{eq mean problem} in the case $\ep=0$ and $p>1$ have been studied in \cite{BARTSCHLIU2005}.

\medskip 
Recently, the authors in \cite{BarileDeFigueiredo2014} studied the existence of a least energy nodal solution for a related BVP were a more general class of diffusions are considered and under similar assumptions as (f1)-(f4), but assuming in (f3) that $f'_p(0)=0$ and that as $|t|\to \infty$, $f(t)$ has slower growth rate than $|t|^{q-2}t$. We remark that in \cite{BarileDeFigueiredo2014} no information regarding the Morse index of nodal solutions is provided.

\medskip
The main technique used in the aforementioned works is the Nehari manifold method. Our goal is to adapt this technique to our setting in order to study in more detail nodal solutions of \eqref{eq mean problem} and to extend the results in \cite{CASTROCOSSIONEUBERGER1997,BartschWeth2003, BarileDeFigueiredo2014}.

\medskip
In order to state our main results, we introduce some required terminology. For any $\ep \in \R$, the {\it Nehari Manifold} associated to the energy $J_{\ep}$ is the set
\begin{equation}\label{Neharimanifolddef}
\NN_{\ep}:=\{v\in W^{1,p}_0(\Omega)\,:\, v \neq 0 \quad \hbox{and}\quad DJ_{\ep}(v)v=0\}.
\end{equation}

Observe that $\NN_{\ep}$ contains all the non-zero solutions of \eqref{eq mean problem}. Since we are interested in nodal solutions, we consider also the set
\begin{equation}\label{Hemispheres}
\MM_{\ep}:= \{v\in W^{1,p}_0(\Omega)\,:\, v^+ \in \NN_{\ep}\quad \hbox{and} \quad v^- \in \NN_{\ep}\}.
\end{equation}

Observe also that $\MM_{\ep}\subset \NN_{\varepsilon}$ and that $\MM_{\varepsilon}$ contains all the nodal solutions of \eqref{eq mean problem}. In view of this remark, we say that a solution $u\in W^{1,p}_0(\Omega)$ of \eqref{eq mean problem} is a least energy nodal solution if 
\begin{equation}\label{leastnodalenergyslncondition}
u\in \MM_{\ep} \quad \hbox{and} \quad J_{\ep}(u)= \min \limits_{v\in \MM_{\ep}} J_{\ep}(v).
\end{equation}

\medskip
Our main results read as follows.

\begin{theorem}\label{Theorem1}
Let $f$ satisfy hypotheses (f1)-(f4). For any $\ep \in \R$, there exists a least energy nodal solution $u_{\ep}\in W^{1,p}_0(\Omega)$ of \eqref{eq mean problem} and 
\begin{equation}\label{nodalcharacteerization}
J_{\varepsilon}(u_{\varepsilon}) = \min \limits_{\substack{v\in \MM_{\varepsilon}}} J_{\varepsilon}(v)= \min\limits_{\substack{v\in W^{1,p}_0(\Omega),\\ v^+,v^- \neq 0}} \left(\max \limits_{t,s \geq 0}J_{\varepsilon}(tv^+ + sv^-)\right).
\end{equation}
\end{theorem}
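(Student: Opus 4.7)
I would follow the Nehari-manifold strategy for sign-changing critical points in the spirit of \cite{BartschWeth2003}, adapted to the two-homogeneity energy $J_\varepsilon$. The preparatory step is a fibering analysis: for every fixed $v\in W^{1,p}_0(\Omega)$ with $v^+,v^-\neq 0$, I study
$$
\Phi_v(t,s):=J_\varepsilon(tv^++sv^-),\qquad (t,s)\in [0,\infty)^2,
$$
and show it admits a unique critical point $(t_v,s_v)\in(0,\infty)^2$, which is the global maximum of $\Phi_v$ on $[0,\infty)^2$. Hypothesis (f2) forces $\Phi_v(t,s)\to-\infty$ as $t+s\to\infty$, while (f3) makes the origin a strict local minimum along each axis, yielding existence of $(t_v,s_v)$. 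Uniqueness follows from (f4): the critical-point equation in $t$,
$$
\varepsilon^2 t\|\nabla v^+\|_2^2+t^{p-1}\|\nabla v^+\|_p^p=\int_\Omega f(tv^+)v^+\,dx,
$$
and its analogue in $s$ can be rewritten so that dividing by $t^{p-1}$ (resp.\ $s^{p-1}$) produces a strictly monotone right-hand side, thanks to $\tau\mapsto f(\tau)/|\tau|^{p-1}$ being strictly increasing, while the left-hand side is non-increasing in $t$ (resp.\ $s$). The resulting projection $v\mapsto t_vv^++s_vv^-\in\MM_\varepsilon$ immediately delivers the min-max identity in \eqref{nodalcharacteerization}, since every $w\in\MM_\varepsilon$ satisfies $(t_w,s_w)=(1,1)$.

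Next, set $c_\varepsilon:=\inf_{\MM_\varepsilon}J_\varepsilon$. The Nehari identity $DJ_\varepsilon(v^\pm)v^\pm=0$ combined with (f1), (f3) and the Sobolev embedding $W^{1,p}_0(\Omega)\hookrightarrow L^q(\Omega)$ produces a uniform lower bound $\|v^\pm\|_{W^{1,p}_0}\geq\rho>0$ on $\MM_\varepsilon$, and (f2) gives $J_\varepsilon(v)\geq(\tfrac{1}{p}-\tfrac{1}{m})\int_\Omega(\varepsilon^2|\nabla v|^2+|\nabla v|^p)\,dx$ modulo lower-order terms; this shows $c_\varepsilon>0$ and that $J_\varepsilon$ is coercive on $\MM_\varepsilon$. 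A minimizing sequence $\{u_n\}\subset\MM_\varepsilon$ is therefore bounded, so up to a subsequence $u_n^\pm\rightharpoonup u^\pm$ in $W^{1,p}_0(\Omega)$, strongly in $L^q(\Omega)$ for $q\in[1,p^*)$, and a.e.\ in $\Omega$, with $u:=u^++u^-$ the weak limit.

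The key identification step is to show $u^\pm\neq 0$ and $J_\varepsilon(u)=c_\varepsilon$. The $L^q$ convergence and (f1) allow passing to the limit in $\int f(u_n^\pm)u_n^\pm\,dx\geq\rho^p$, preventing $u^\pm\equiv 0$. Applying the fibering map to $u$ yields $(t,s):=(t_u,s_u)$ with $tu^++su^-\in\MM_\varepsilon$, hence $c_\varepsilon\leq J_\varepsilon(tu^++su^-)$. Weak lower semicontinuity of the kinetic term and strong $L^q$-convergence of the $\int F(u_n)\,dx$ term give
$$
J_\varepsilon(tu^++su^-)\leq\liminf_{n\to\infty}J_\varepsilon(tu_n^++su_n^-)\leq\liminf_{n\to\infty}J_\varepsilon(u_n)=c_\varepsilon,
$$
where the middle inequality uses the maximality of $(1,1)$ for $\Phi_{u_n}$ since $u_n\in\MM_\varepsilon$. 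The resulting chain of equalities forces $(t,s)=(1,1)$ and $J_\varepsilon(u)=c_\varepsilon$, so $u$ minimizes $J_\varepsilon$ on $\MM_\varepsilon$.

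Finally, to upgrade the constrained minimizer $u$ to a critical point of $J_\varepsilon$, I argue by contradiction: if $DJ_\varepsilon(u)\neq 0$, the strict maximality of $(1,1)$ for $\Phi_u$ (which follows from the uniqueness step) together with a pseudo-gradient deformation on a small neighbourhood of $u$ produces $\tilde u\in\MM_\varepsilon$ with $J_\varepsilon(\tilde u)<c_\varepsilon$, a contradiction. The main obstacle, and the reason the $(p,2)$-setting requires care beyond the $p$-Laplacian case of \cite{BARTSCHLIU2005}, is the interaction between the two homogeneities $2$ and $p$ in the kinetic part of $J_\varepsilon$: both uniqueness of the fibering maximizer and the passage-to-the-limit above rely on phrasing (f4) relative to the $p$-scaling, so that the superlinear contribution dominates as $t,s$ grow while the quadratic term $\varepsilon^2\|\nabla\cdot\|_2^2$ remains compatible with the strict monotonicity of the fibered derivatives.
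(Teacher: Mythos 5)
Your proposal is correct and follows essentially the same Nehari-manifold/fibering-map scheme the paper uses: hypothesis (f4) gives uniqueness of the maximizer of the fibered energy (your monotonicity-after-dividing-by-$t^{p-1}$ argument is an equivalent repackaging of the paper's sign computation for $\frac{d^2 g_v}{dt^2}$ at a critical point in Lemma \ref{Mnonempty}), (f1)--(f3) give the uniform lower bound on $\NN_\varepsilon$ and coercivity, weak lower semicontinuity plus Vainberg's lemma identify the weak limit as a minimizer on $\MM_\varepsilon$, and a quantitative deformation lemma upgrades the constrained minimizer to a free critical point. The one step your sketch leaves implicit --- why the pseudo-gradient deformation of a neighbourhood of $u$ still meets $\MM_\varepsilon$, so that the deformed point really produces a contradiction --- is exactly the content of the paper's Lemma \ref{Deformation}, which fixes it by a Brouwer degree comparison on a small square $D$ around $(1,1)$: the undeformed map $\Psi_0(t,s)=(DJ_\varepsilon(tu^+)u^+,DJ_\varepsilon(su^-)u^-)$ has degree one at $0$ over $D$, the deformation is the identity on $\partial D$, so the deformed map $\Psi_1$ also has degree one and hence vanishes somewhere in $D$, giving a point of $\MM_\varepsilon$ with strictly lower energy.
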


Although the proof of the existence of the solution $u_{\varepsilon}$, stated in Theorem \ref{Theorem1}, is essentially  contained in Theorem 1.1 in \cite{BarileDeFigueiredo2014}, we include the detailed proof since many of the elements in it will be used in the computation of the Morse Index of the solution $u_{\ep}$. 

\medskip
Also, up to our knowledge, the {\it min-max} characterization of the energy level $J_{\varepsilon}(u_{\varepsilon})$ in \eqref{nodalcharacteerization} is new and it is alternative to Nehari manifold approach.

\medskip
We point out that Theorem \ref{Theorem1} still holds for $\varepsilon \neq 0$ replacing (f3) by (f3)' and the hypothesis that $f'(0)<\varepsilon^2 \lambda_1$, where $\lambda_1>0$ is the first eigenvalue of $-\Delta$ with homogeneous Dirichlet boundary condition. We impose (f3) instead of these weaker conditions in order to obtain some uniform estimates which will be used later on.

\medskip

\medskip
Our second result is concerned with qualitative properties and the Morse index computation for least energy nodal solutions in the case $\ep\neq 0$.

\begin{theorem}\label{Theorem2}
Let $f$ satisfy (f1), (f2), (f3)' and (f4). For any $\ep\neq 0$ and any local minimizer, $u\in \MM_{\varepsilon}\cap C^1(\overline{\Omega})$, of $J_\varepsilon|_{\MM_\varepsilon}$,
\begin{itemize}
\item[(i)] $m_{\ep}(u) =2$ and

\item[(ii)] $u$ has exactly two nodal domains, i.e. the sets $\{u>0\}$ and $\{u<0\}$ are connected and
$$
\{u \neq 0\} = \{u>0\} \cup \{u<0\}.
$$ 
\end{itemize}

In particular, any least energy nodal solution $u$ of \eqref{eq mean problem} satisfies {\it (i)} and {\it (ii)}.
\end{theorem}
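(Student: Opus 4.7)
The plan is to prove (i) by sandwiching $m_\ep(u)$ between $2$ and $2$, and then to deduce (ii) from (i) by contradiction.

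For the lower bound $m_\ep(u) \geq 2$, I would use the two-dimensional subspace $\mathrm{span}\{u^+, u^-\}$. Since the supports of $u^+$ and $u^-$ are essentially disjoint, formula (\ref{2ndDerivJep}) immediately gives $D^2 J_\ep(u)(u^+, u^-) = 0$; moreover, testing $DJ_\ep(u) = 0$ against $u^\pm$ yields the two separated Nehari identities $u^\pm \in \NN_\ep$. A direct expansion of (\ref{2ndDerivJep}), using that on $\{u>0\}$ one has $\nabla u = \nabla u^+$ and $|\nabla u|^{p-4}(\nabla u\cdot \nabla u^+)^2 = |\nabla u^+|^p$, gives
\begin{equation*}
D^2 J_\ep(u)(u^+, u^+) = \int_\Omega \ep^2 |\nabla u^+|^2 + (p-1)|\nabla u^+|^p - f'(u^+)(u^+)^2\,dx,
\end{equation*}
and subtracting $(p-1)$ times the Nehari identity for $u^+$ rewrites this as
\begin{equation*}
-(p-2)\ep^2 \int_\Omega |\nabla u^+|^2\,dx + \int_\Omega \bigl[(p-1)f(u^+)u^+ - f'(u^+)(u^+)^2\bigr]\,dx.
\end{equation*}
Hypothesis (f4) in its pointwise form $f'(t)t^2 > (p-1)f(t)t$ for $t \neq 0$ makes the second integral strictly negative, while $\ep\neq 0$ and $p>2$ make the first term strictly negative; a symmetric argument handles $u^-$. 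Hence $D^2 J_\ep(u)$ is negative definite on $\mathrm{span}\{u^+, u^-\}$.

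The hard part is the matching upper bound $m_\ep(u) \leq 2$. The strategy is to exploit that $u$ is a local minimizer of $J_\ep$ on the codimension-two constraint set $\MM_\ep$, so that $D^2 J_\ep(u) \geq 0$ on a codimension-two subspace. This requires showing that $\MM_\ep$ is locally a $C^1$-submanifold of $W^{1,p}_0(\Omega)$ near $u$, and producing, for any $\varphi$ in its tangent space at $u$, a $C^1$-curve $\gamma:(-\delta,\delta)\to \MM_\ep$ with $\gamma(0)=u$ and $\gamma'(0)=\varphi$. The delicate issue is that the splitting $v\mapsto v^\pm$ is not Fr\'echet differentiable in $W^{1,p}_0(\Omega)$; the remedy is to use the $C^1(\overline{\Omega})$-regularity of $u$, which makes $\{u>0\}$ and $\{u<0\}$ have sufficiently regular boundaries, together with the implicit function theorem applied to the two-parameter map
\begin{equation*}
H(t,s,\varphi)=\bigl(DJ_\ep(tu^+ + su^- + \varphi)(u^+),\ DJ_\ep(tu^+ + su^- + \varphi)(u^-)\bigr),
\end{equation*}
whose Jacobian in $(t,s)$ at $(1,1,0)$ is the diagonal matrix with (nonzero) entries $D^2 J_\ep(u)(u^\pm,u^\pm)$ from the lower-bound step. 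Once $\gamma$ is produced, $J_\ep\circ\gamma$ having a local minimum at $0$ together with $DJ_\ep(u)=0$ yields $D^2 J_\ep(u)(\varphi,\varphi)\geq 0$, and combined with the lower bound this forces $m_\ep(u)=2$.

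Finally, assertion (ii) follows from (i) by contradiction. If $u$ had $k \geq 3$ nodal domains $\Omega_1,\dots,\Omega_k$, then, using $u\in C^1(\overline{\Omega})$ and $u\equiv 0$ on $\partial\Omega_i$, the functions $\varphi_i := u\,\chi_{\Omega_i}$ lie in $W^{1,p}_0(\Omega)$ and have pairwise essentially disjoint supports. Testing $DJ_\ep(u)=0$ against $\varphi_i$ gives $\varphi_i\in \NN_\ep$; the disjointness kills all cross terms $D^2 J_\ep(u)(\varphi_i,\varphi_j)$ for $i\neq j$; and the computation of the lower-bound step with $u^+$ replaced by $\varphi_i$ yields $D^2 J_\ep(u)(\varphi_i,\varphi_i) < 0$. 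The three-dimensional subspace $\mathrm{span}\{\varphi_1,\varphi_2,\varphi_3\}$ would then contradict $m_\ep(u)=2$ established in (i).
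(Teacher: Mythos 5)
Your lower-bound argument ($m_{\ep}(u)\geq 2$ via $D^2J_{\ep}(u)<0$ on ${\rm span}\{u^+,u^-\}$) and your argument for part (ii) (at least two nodal domains from $u\in\MM_{\ep}$, at most two from $m_{\ep}(u)=2$ using that $D^2J_{\ep}(u)$ is negative on directions $u\,\chi_{\Omega_i}$ with pairwise disjoint support) both match the paper. The gap is in the upper bound $m_{\ep}(u)\leq 2$, and it is a real one.

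First, the map
$$
H(t,s,\varphi)=\bigl(DJ_{\ep}(tu^++su^-+\varphi)(u^+),\ DJ_{\ep}(tu^++su^-+\varphi)(u^-)\bigr)
$$
does \emph{not} have $\MM_{\ep}$ as its zero level set. Membership in $\MM_{\ep}$ requires $DJ_{\ep}(v)(v^+)=DJ_{\ep}(v)(v^-)=0$, i.e.\ testing against the positive and negative parts \emph{of $v$ itself}, not against the fixed functions $u^{\pm}$. For $v=tu^++su^-+\varphi$ with $\varphi$ arbitrary small in $W^{1,p}_0(\Omega)$, one generally has $v^+\neq tu^+$ (the perturbation $\varphi$ can create mass of the opposite sign on $\{u\leq 0\}$). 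So the curve you obtain from the implicit function theorem applied to $H$ does not live in $\MM_{\ep}$, and there is no reason for $J_{\ep}$ restricted to that curve to have a local minimum at the base point. Replacing $u^\pm$ by $v^\pm$ in $H$ restores the correct zero set but destroys the differentiability you need, which is precisely the obstruction the paper identifies.

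Second, you write that one should show ``$\MM_{\ep}$ is locally a $C^1$-submanifold of $W^{1,p}_0(\Omega)$ near $u$,'' appealing only to $u\in C^1(\overline\Omega)$. The paper explicitly states that $\MM_{\ep}$ is \emph{not} a submanifold of $W^{1,p}_0(\Omega)$ (due to the non-differentiability of $v\mapsto v^{\pm}$), and regularity of the single point $u$ does not repair this, since one must differentiate the constraint along perturbations $v$ that need not share that regularity. What the paper actually does is substantially different: it first proves (Proposition \ref{regularityoftheNehariequator}) that $u\in W^{2,r}(\Omega)$ for every $r$, works in the Banach space $W:=W^{1,p}_0(\Omega)\cap W^{2,p}(\Omega)$ with the stronger $W^{2,p}$-norm, proves in Lemma \ref{dif} that the constraint maps $P^{\pm}_{\ep}-Q^{\pm}$ are $C^1$ \emph{on $W$} (not on $W^{1,p}_0(\Omega)$), deduces in Lemma \ref{lemma manifold} that $\MM_{\ep}\cap W$ is a $C^1$-manifold of codimension two \emph{in $W$}, obtains $D^2J_{\ep}(u)\geq 0$ on the tangent space $T_u(\MM_{\ep}\cap W)$, and finally transfers the codimension bound from $W$ back to $W^{1,p}_0(\Omega)$ by density and the closure-of-subspace Lemma \ref{lemma codimension}. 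Your outline contains neither the passage to the smaller space $W$ where the constraint becomes differentiable, nor the density/closure step needed to conclude in $W^{1,p}_0(\Omega)$; without these the upper bound is not established.
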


For $\ep=0$ and $p=2$ (the semilinear case), Theorem \ref{Theorem2} was already hinted in \cite{CASTROCOSSIONEUBERGER1997} and proved in \cite{BartschWeth2003}. In fact our proof of Theorem \ref{Theorem2} closely follows the scheme of \cite{BartschWeth2003}.


\medskip 
Regarding the case $\ep=0$, our next result extends Theorem 1.1 in \cite{BARTSCHLIU2005} and Theorem 1.1 in \cite{BarileDeFigueiredo2014} by providing an example of a nodal solution of $(\mathcal{P}_0)$ having Morse index two.

\begin{theorem}\label{Theorem3}
Let $f$ satisfy hypotheses (f1)-(f4). Then the BVP $(\mathcal{P}_0)$ has a least energy nodal solution $u_0 \in W^{1,p}_0(\Omega) \cap C^{1}(\overline{\Omega})$ with $m_0(u_0)=2$ and with two exactly two nodal domains.
\end{theorem}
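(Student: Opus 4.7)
The plan is to produce $u_0$ as a $C^1(\overline{\Omega})$--limit of the least energy nodal solutions $u_{\varepsilon_n}$ furnished by Theorems \ref{Theorem1} and \ref{Theorem2} along a sequence $\varepsilon_n \to 0^+$, and to transfer the Morse index and the nodal count from $u_{\varepsilon_n}$ to $u_0$. Existence of some least energy nodal solution $u_0\in W^{1,p}_0(\Omega)$ of $(\mathcal{P}_0)$ is already provided by Theorem \ref{Theorem1}; the $C^1(\overline{\Omega})$ regularity follows by first establishing $u_0\in L^{\infty}(\Omega)$ via Moser iteration (using the subcritical bound in (f1)), and then invoking Tolksdorf--DiBenedetto type regularity for the $p$-Laplacian as quoted in \cite{CASTORINAESPOSITOSCIUNZI2009, CingolaniVannella2003}.

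Fix $\varepsilon_n\to 0$ and let $u_n:=u_{\varepsilon_n}$ be the least energy nodal solutions of $(\mathcal{P}_{\varepsilon_n})$ given by Theorem \ref{Theorem1}. By Theorem \ref{Theorem2} each $u_n$ belongs to $C^1(\overline{\Omega})$, has exactly two nodal domains, and satisfies $m_{\varepsilon_n}(u_n)=2$. Testing the min-max formula \eqref{nodalcharacteerization} against a fixed nodal pair produces a uniform upper bound on $J_{\varepsilon_n}(u_n)$; combined with the Nehari identity and the superlinearity (f2) this yields a uniform $W^{1,p}_0(\Omega)$-bound. Moser iteration using (f1) then delivers a uniform $L^{\infty}$-bound, and regularity estimates for $-\varepsilon^2\Delta-\Delta_p$ (with structure constants independent of $\varepsilon_n\in[0,1]$, since the extra linear term only improves coercivity) furnish a uniform $C^{1,\alpha}(\overline{\Omega})$-bound. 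Passing to a subsequence, $u_n\to \tilde u_0$ in $C^1(\overline{\Omega})$ for some $\tilde u_0$ which, by direct passage to the limit in \eqref{eq weak mean problem}, solves $(\mathcal{P}_0)$. Using \eqref{nodalcharacteerization} in both directions (lower semicontinuity gives $J_0(\tilde u_0)\leq \lim J_{\varepsilon_n}(u_n)$, while testing a fixed nodal pair of $(\mathcal{P}_0)$ in each $J_{\varepsilon_n}$ gives the reverse inequality), one checks that $\tilde u_0^{\pm}\neq 0$ and that $\tilde u_0$ is itself a least energy nodal solution of $(\mathcal{P}_0)$, which we designate as $u_0$.

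The lower bound $m_0(u_0)\geq 2$ holds for any nodal critical point of $J_0$: the Nehari identity $DJ_0(u_0)u_0^{\pm}=0$ combined with (f4) gives
\begin{equation*}
D^2 J_0(u_0)(u_0^{\pm},u_0^{\pm}) = -\int_{\{\pm u_0>0\}}\left[f'(u_0^{\pm})\,u_0^{\pm}-(p-1)f(u_0^{\pm})\right]u_0^{\pm}\,dx<0,
\end{equation*}
while the cross term $D^2 J_0(u_0)(u_0^+,u_0^-)$ vanishes because $\nabla u_0^+$ and $\nabla u_0^-$ have essentially disjoint supports and $u_0^+u_0^-\equiv 0$; hence $D^2 J_0(u_0)$ is negative definite on $\mathrm{span}\{u_0^+,u_0^-\}$. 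For the matching upper bound $m_0(u_0)\leq 2$, suppose for contradiction that $D^2 J_0(u_0)$ is negative definite on some three-dimensional $E\subset W^{1,p}_0(\Omega)$. By density, one may take $E$ spanned by smooth functions; then the coefficients $\varepsilon_n^2+|\nabla u_n|^{p-2}$ and $(p-2)|\nabla u_n|^{p-4}\nabla u_n\otimes \nabla u_n$ appearing in \eqref{2ndDerivJep} (the second factor being uniformly controlled by $|\nabla u_n|^{p-2}$) converge uniformly on $\overline{\Omega}$ as $n\to\infty$ thanks to the $C^1$-convergence $u_n\to u_0$, so $D^2 J_{\varepsilon_n}(u_n)$ remains negative definite on $E$ for large $n$, contradicting $m_{\varepsilon_n}(u_n)=2$.

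The main obstacle is the propagation of the ``exactly two nodal domains'' property to the limit, since nodal components could in principle pinch off or merge. The $C^1(\overline{\Omega})$ convergence $u_n\to u_0$, combined with the strong maximum principle and Hopf's lemma applied inside each nodal component of $u_n$ (yielding uniform lower bounds for $|u_n|$ on compact subsets of the nodal components and for $|\nabla u_n|$ transversal to $\partial\Omega$), ensures that any nodal component of $u_0$ arises as a $C^1$-limit of a nodal component of $u_n$. Since $u_0^{\pm}\neq 0$, $u_0$ has at least two nodal domains; since $u_n$ has exactly two, the limit $u_0$ has at most two, completing the proof.
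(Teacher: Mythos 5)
Your Morse index upper bound takes a genuinely different route from the paper's. The paper only establishes strong $W^{1,p}_0(\Omega)$-convergence $u_{\varepsilon_n}\to {\rm u}_0$ (Lemma \ref{convergence least energies}) and then runs a delicate tangent-space argument: it introduces the functionals $\Phi^{\pm}_{\varepsilon_n}$, shows that the closure in $W^{1,p}_0(\Omega)$ of the tangent space $T_{u_{\varepsilon_n}}(\MM_{\varepsilon_n}\cap W)$ equals the codimension-two kernel $E_n=\{\Phi^{\pm}_{\varepsilon_n}=0\}$, and then, given an arbitrary $\varphi$ in the analogous limit space $V=\{\Phi^{\pm}_0=0\}$, projects it onto $V_n$ via coefficients $a_n,b_n\to 0$ and passes to the limit in $D^2J_{\varepsilon_n}(u_{\varepsilon_n})$. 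You instead upgrade to $C^1(\overline{\Omega})$-convergence (plausible: the paper's Lemmas \ref{lemmaregul1}--\ref{lemmaregul2} do give $\varepsilon$-uniform $L^{\infty}$ and $C^{1,\beta}$ bounds once one has the uniform $W^{1,p}_0$ bound, although the paper never states the $C^1$-compactness explicitly), which lets you argue by contradiction with a fixed three-dimensional $E$ on which $D^2J_{\varepsilon_n}(u_n)\to D^2 J_0(u_0)$ uniformly on bounded sets, so negative-definiteness transfers. This is cleaner, but it carries a hidden dependency: you need the $C^1$ convergence, which the paper's compactness lemma does not give you directly and which must be established separately.

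There is, however, a genuine gap in your last paragraph. The claim that ``any nodal component of $u_0$ arises as a $C^1$-limit of a nodal component of $u_n$,'' and hence that the number of nodal domains cannot increase in the limit, is not justified by the maximum principle or Hopf's lemma alone. Even with $C^1$-convergence, a connected nodal set $\{u_n>0\}$ could shrink to a set whose closure has several components joined only through degenerate portions of $\{u_0=0\}$ (where $u_0$ and $\nabla u_0$ both vanish); nothing you invoke excludes this. The clean way to finish is the one the paper uses: having established $m_0(u_0)=2$, apply Lemma \ref{nodal regions}, which shows that the number of nodal regions of any solution is bounded above by its Morse index, hence $u_0$ has at most two nodal domains. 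Combined with $u_0^{\pm}\neq 0$, this gives exactly two nodal domains without any topological continuity argument. You already have everything needed to invoke that lemma, so the fix is immediate; but as written the nodal-count step does not go through.
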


\medskip
If a least energy nodal solutions of $(\mathcal{P}_0)$ has some nondegeneracy, then it can be approximated by local minimizers associated to \eqref{eq mean problem}. This is the content of our next result.

\begin{theorem}\label{Theorem4}
Let $f$ satisfy hypotheses (f1)-(f4) and let  $u\in \MM_0$ be a strict local minimizer of $J_0|_{\MM_0}$. Then, there exists a decresing sequence $\{\varepsilon_n\}_{n\in \N}\subset (0,\infty)$ with $\varepsilon_n\to 0$ and there exists a sequence of functions  $\{u_n\}_{n\in \N}\subset W^{1,p}_0(\Omega)$ such that
\begin{itemize}
	\item[(i)] for every $n\in \N$, $u_n \in \MM_{\varepsilon_n}$ and $u_n$ is a local minimizer of  $J_{\varepsilon_n}|_{\MM_{\varepsilon_n}}$,
	\item[(ii)] $u_n \to u$ strongly in $W_0^{1,p}(\Omega)$ as $n\to \infty$ and 
	\item[(iii)] $m_{\varepsilon_n}(u_n)=2$ and $u_n$ has exactly two nodal domains.
	\item[(iv)] $m_0(u)=2$ and $u$ has exactly two nodal domains.
\end{itemize}
\end{theorem}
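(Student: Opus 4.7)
The plan is to construct each $u_n$ as a constrained minimizer of $J_{\varepsilon_n}$ on the intersection of $\MM_{\varepsilon_n}$ with a fixed closed $W_0^{1,p}$-ball around $u$, and then to exploit the strict local minimality of $u$ to force $u_n\to u$ strongly, so that eventually $u_n$ lies in the relative interior of that intersection and becomes a genuine local minimizer of $J_{\varepsilon_n}|_{\MM_{\varepsilon_n}}$.

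\emph{Setup and existence of the $u_n$.} By strict local minimality, fix $r>0$ so that $J_0(u)<J_0(v)$ for every $v\in \overline{B}_r(u)\cap \MM_0$ with $v\neq u$, where $\overline{B}_r(u)\subset W_0^{1,p}(\Omega)$. For each $\varepsilon>0$, the fiber-map argument developed in the proof of Theorem \ref{Theorem1} produces unique $t_\varepsilon,s_\varepsilon>0$ with $t_\varepsilon u^+ + s_\varepsilon u^-\in \MM_\varepsilon$, and $(t_\varepsilon,s_\varepsilon)\to (1,1)$ as $\varepsilon\to 0^+$ by continuity of the Nehari projection in $\varepsilon^2$; hence $\MM_\varepsilon\cap \overline{B}_r(u)\neq \emptyset$ for all sufficiently small $\varepsilon$. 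Pick any decreasing sequence $\varepsilon_n\downarrow 0$ in this regime and set $c_n:=\inf\{J_{\varepsilon_n}(v):v\in \MM_{\varepsilon_n}\cap \overline{B}_r(u)\}$. The direct method, combining weak lower semicontinuity, the subcritical growth (f1), the uniform positive $W^{1,p}$-lower bound on $\|v^\pm\|$ for $v\in \MM_{\varepsilon_n}$, and the $(S_+)$ property of $-\varepsilon^2\Delta - \Delta_p$, then yields a minimizer $u_n\in \MM_{\varepsilon_n}\cap\overline{B}_r(u)$.

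\emph{Strong convergence.} From $c_n\le J_{\varepsilon_n}(t_{\varepsilon_n}u^+ + s_{\varepsilon_n}u^-)\to J_0(u)$ and (f2), $\{u_n\}$ is bounded in $W_0^{1,p}$, so along a subsequence $u_n\rightharpoonup \tilde u$ weakly, strongly in $L^q$ for $q<p^*$, and a.e.\ pointwise. Because $p>2$, $\varepsilon_n^2\int|\nabla u_n|^2\to 0$, and weak lower semicontinuity gives $J_0(\tilde u)\le \liminf J_{\varepsilon_n}(u_n)\le J_0(u)$. Passing to the limit in the Nehari identity $\varepsilon_n^2\int|\nabla u_n^\pm|^2+\int|\nabla u_n^\pm|^p=\int f(u_n^\pm)u_n^\pm$ yields $\int|\nabla \tilde u^\pm|^p\le \int f(\tilde u^\pm)\tilde u^\pm$, while the uniform positive lower bound on $\|u_n^\pm\|$ together with $L^q$-compactness forces $\tilde u^\pm\neq 0$. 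The fiber-map analysis based on (f4) then yields $\tilde t,\tilde s\in (0,1]$ with $\tilde t\tilde u^+ + \tilde s\tilde u^-\in \MM_0$, and the monotonicity of $t\mapsto \int[\tfrac{1}{p}f(tv)tv-F(tv)]$ provided by (f4), together with the identity $\lim J_{\varepsilon_n}(u_n)=\int[\tfrac{1}{p}f(\tilde u)\tilde u-F(\tilde u)]$ obtained by eliminating $\int|\nabla u_n|^p$ via the Nehari relation, gives
\[
J_0(\tilde t\tilde u^+ + \tilde s\tilde u^-)\le \lim J_{\varepsilon_n}(u_n)\le J_0(u),
\]
with equality iff $(\tilde t,\tilde s)=(1,1)$. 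A careful calibration of $r$ (ensuring the Nehari projection near $u$ stays inside $\overline{B}_r(u)$) together with strict local minimality then forces $(\tilde t,\tilde s)=(1,1)$, $\tilde u\in \MM_0\cap \overline{B}_r(u)$, and finally $\tilde u=u$. Strong convergence follows from $\int|\nabla u_n|^p\to \int|\nabla u|^p$ and uniform convexity of $L^p$.

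\emph{Conclusion and Morse index.} Strong convergence puts $u_n\in B_r(u)$ for all $n$ large, so $u_n$ is a local minimizer of $J_{\varepsilon_n}|_{\MM_{\varepsilon_n}}$, proving (i) and (ii). Elliptic regularity gives $u_n\in C^1(\overline{\Omega})$, so Theorem \ref{Theorem2} yields (iii). For (iv), the bound $m_0(u)\le 2$ follows because $u$ minimizes $J_0$ on the codimension-two manifold $\MM_0$ near $u$, forcing $D^2J_0(u)\ge 0$ on $T_u\MM_0$; the reverse $m_0(u)\ge 2$ is obtained by testing $D^2J_0(u)$ on $\mathrm{span}\{u^+,u^-\}$ and invoking (f4) as in Theorem \ref{Theorem2}. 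The main obstacle in this plan is the strong-convergence step: ruling out $\tilde u\notin \MM_0$ requires the weak-limit projection $\tilde t\tilde u^+ + \tilde s\tilde u^-$ to remain inside $\overline{B}_r(u)$, which forces one to calibrate $r$ to the local continuity constants of the Nehari projection and to exploit strict local minimality simultaneously.
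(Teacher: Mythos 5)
Your roadmap matches the paper's: minimize $J_{\varepsilon_n}$ over $\MM_{\varepsilon_n}\cap\overline{B}_r(u)$, show the minimizers converge strongly to $u$, and then invoke Theorem~\ref{Theorem2} for (iii) and a Theorem~\ref{Theorem3}-style limit for (iv). But there are two genuine gaps.

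First, the existence of $u_n$ is not a direct application of the direct method. $\MM_{\varepsilon_n}$ is not weakly closed: if $v_k\in\MM_{\varepsilon_n}\cap\overline{B}_r(u)$ is a minimizing sequence with $v_k\rightharpoonup v$, one cannot conclude $v\in\MM_{\varepsilon_n}$ without an extra ingredient. The relevant device in the paper is Lemma~\ref{Nehari compactness}, whose hypothesis requires the Nehari projection $\tau_{\varepsilon_n,v^+}v^++\tau_{\varepsilon_n,v^-}v^-$ of the weak limit to lie again in the ball $B$. Your proposal tacitly assumes this, and you flag the difficulty yourself (``careful calibration of $r$ \dots is the main obstacle''), but the calibration is the entire content of the paper's Claims~1--3 and is not elementary: Claim~1 establishes a \emph{uniform quantitative} estimate $|\tau_{\varepsilon,v}-\tau_{0,v}|\le K\varepsilon^2$ on a ball, whose proof needs a uniform positive lower bound on $\int_\Omega\big(f'(w)w^2-(p-1)f(w)w\big)\,dx$ over $\{w\in B_R(0):\|w\|_{L^q}\ge\rho\}$, obtained through a compactness argument with (f4) and Vainberg's lemma; Claim~2 turns strict local minimality into a strict energy gap $C_{s_1}>0$ on the annulus $s_1\le\|v-u\|\le s_2$; and the inequality \eqref{second ineq} plus the choice of a strictly smaller inner radius $s'<s$ together rule out minimizing sequences escaping to the boundary of the ball, which is precisely where Lemma~\ref{Nehari compactness} would otherwise fail to apply. ``Continuity of the Nehari projection in $\varepsilon^2$'' alone does not give the uniformity in $v$ that is needed here.

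Second, and more seriously, your argument for $m_0(u)\le 2$ in part (iv) is incorrect. You assert that ``$u$ minimizes $J_0$ on the codimension-two manifold $\MM_0$ near $u$, forcing $D^2J_0(u)\ge 0$ on $T_u\MM_0$.'' But $\MM_0$ is \emph{not} a $C^1$-submanifold of $W_0^{1,p}(\Omega)$: as the paper recalls after Lemma~\ref{alpha=beta}, the maps $v\mapsto v^\pm$ are not differentiable, so $\MM_\varepsilon$ has no tangent space in $W_0^{1,p}$. The manifold structure used in the proof of Theorem~\ref{Theorem2} lives in $W=W_0^{1,p}\cap W^{2,p}$ (Lemma~\ref{lemma manifold}), and both that lemma and the $W^{2,r}$-regularity of Proposition~\ref{regularityoftheNehariequator} require $\varepsilon\neq 0$ because the $p$-Laplacian is degenerate at $\varepsilon=0$. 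One cannot place $u\in\MM_0$ in $W$ nor read off a codimension-two tangent space. The paper instead obtains $m_0(u)\le 2$ by the approximation scheme of Theorem~\ref{Theorem3}: one forms, for each $\varepsilon_n\neq 0$, the codimension-two subspace $V_n\subset W_0^{1,p}(\Omega)$ on which $D^2J_{\varepsilon_n}(u_{\varepsilon_n})\ge 0$, then passes to the limit using the strong convergence $u_{\varepsilon_n}\to u$ to conclude $D^2J_0(u)\ge 0$ on a limit codimension-two subspace $V$. You should replace your tangent-space argument by this limiting construction; the lower bound $m_0(u)\ge 2$ you give is fine.
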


\medskip
We believe the property of the least energy nodal solution stated in Theorem \ref{Theorem3} to be generic for such solutions. 
In this regards,  Theorem \ref{Theorem4} provides a partial reciprocal to Theorem \ref{Theorem3}. We remark also that Theorem \ref{Theorem4} is true if the solution $u \in \MM_0$ is a limit point of isolated local minizers of $J_0|_{\MM_0}$ (see Remark at the end of the paper).

\medskip
For $\varepsilon=1$ and $p>2$, S. Cingolani and G. Vanella in \cite{CingolaniVannella2003}, obtained  critical groups estimates at any solution of $(\mathcal{P}_1)$, in the spirit of the generalized Morse lemma and assuming only hypothesis (f1). 

\medskip
Recently the authors in \cite{BOBKOVKOLONITSKII2017}, obtained symmetry results and sign changing properties of least energy nodal solutions for the case $\varepsilon=0$ and $p>1$ under similiar assumption on the nonlinearity. In \cite{BOBKOVKOLONITSKII2017} the authors used the domain derivative method to approach a domain optimization problem.


\medskip
 
%

\medskip
The paper is organized as follows. In  section 2 we prove several lemmas that will be useful throught the rest of the paper and we present the proof of the Theorem \ref{Theorem1}. Section 3 is devoted to some regularity results needed for the computation of the Morse index of the solutions for $\ep\neq 0$. In section 4 we give the detailed proof of the Theorem \ref{Theorem2} and in section 5 we handle the limiting case $\ep=0$ and present the proofs of the rest of the Theorems.

\section{Qualitative lemmas and existence}\label{Existence}
In this part, we present the existence of a least energy nodal solution for \eqref{eq mean problem}. Our arguments are motivated by those in \cite{CASTROCOSSIONEUBERGER1997}, but we refer the reader to \cite{BarileDeFigueiredo2014}, \cite{BARTSCHLIU2005} and \cite{BartschWeth2003} (and references therein), where existence of least energy nodal solutions was already treated in other settings.

\medskip
We begin with a some remarks that will be crucial throughout this work and introduce also some further notation.

\medskip
Assume hypotheses (f1), (f3). Since $2<p<q$, given any $\mu\in (f'_p(0), \infty)$, there exists $C_{\mu}>0$ such that for any $t\in \R$,
\begin{equation}\label{estimateimportant1}
f(t)t \leq \mu |t|^{p} + C_{\mu}|t|^{q}.
\end{equation}

Assuming hypothesis (f2) and integrating \eqref{p-1superlineargrowth}, we find constants $a,b>0$ such that 
\begin{equation}\label{p-1superlinear}
F(t) \geq a|t|^m - b \quad \hbox{for} \quad t\in \R.
\end{equation}


\medskip
Let $\ep \in \R$ be arbitrary, but fixed and consider the function $\gamma_{\ep}: W^{1,p}_0(\Omega) \to \R$ defined by 
$$
\begin{aligned}
\gamma_{\ep}(v):= &DJ_{\ep}(v)(v) \\
=&\int_{\Omega}\left(\ep^2|\nabla v|^2 + |\nabla v|^p - f(v)v\right)dx
\end{aligned}
$$ 
for $v\in W^{1,p}_0(\Omega)$. Observe that $\gamma_{\ep}\in C^1(W^{1,p}_0(\Omega))$ with derivative
\begin{equation}\label{derivgamma}
D\gamma_{\ep}(v)\varphi = \int_{\Omega} \left(\left[2 \ep^2  + p |\nabla v|^{p-2}\right]\nabla v \cdot \nabla \varphi - f'(v)v\varphi - f(v)\varphi\right)dx
\end{equation}
for $v, \varphi \in  W^{1,p}_0(\Omega)$. The set $\NN_{\ep}$, defined in \eqref{Neharimanifolddef}, reads as 
$$
\mathcal{N}_{\ep}:= \{v\in W^{1,p}_0(\Omega)-\{0\}\,:\, \gamma_{\ep}(v)=0\}
$$
and $v \in \NN_{\ep}$ if and only if $v\in W^{1,p}_0(\Omega)-\{0\}$ and
\begin{equation}\label{eqnNN}
\int_{\Omega} \left(\ep^2 |\nabla v|^2 + |\nabla v|^p\right)dx = \int_{\Omega}f(v)vdx.
\end{equation}

Next lemma states that the energy $J_{\ep}|_{\NN_{\ep}}$ is uniformly coercive and the sets $\NN_{\ep}$ are uniformly bounded away from zero. 

\begin{lemma}\label{QualitLemma} Let $f$ satisfy (f1)-(f3). Then, the following properties are satisfied.
	\begin{itemize}
		\item[(i)] There exist $C\in \mathbb{R}$ such that for any $\ep \in \R$ and any $v \in \NN_{\ep}$, 
\begin{equation}\label{coercivity}
J_{\ep}(v) \geq \left(\frac{1}{p}- \frac{1}{m}\right)\|v\|_{W^{1,p}_0(\Omega)}^p+C.
\end{equation}
\item [(ii)] There exists $\rho>0$ such that for every $\ep \in \R$ and every $v\in \NN_{\ep}$, 
\begin{equation}\label{boundedawayfromzero}
\|v\|_{W^{1,p}_0(\Omega)} \geq \rho.
\end{equation}
Moreover, there exists  $\rho_0>0$ such that for every $\ep \geq 0$ and every $v\in \NN_{\ep}$, 
\begin{equation}\label{boundedawayfromzerolq}
\|v\|_{L^q(\Omega)} \geq \rho_0,
\end{equation}
where $q$ is the exponent introduced in the condition (f1). 
\item [(iii)] For every $\ep \in \R$, $\NN_{\ep}$ is a closed subset of $W^{1,p}_0(\Omega)$.
\end{itemize}
\end{lemma}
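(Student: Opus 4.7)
The plan is to prove the three items in order, each by a short computation that leverages the Nehari identity $\gamma_{\ep}(v)=0$ together with the structural assumptions on $f$, and to arrange things so that the resulting constants are manifestly independent of $\ep$.

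For item (i), I would use that on $\NN_\ep$ one has $J_\ep(v) = J_\ep(v) - \tfrac{1}{m}\gamma_\ep(v)$. Expanding this identity produces
$$J_\ep(v) = \Bigl(\tfrac{1}{2}-\tfrac{1}{m}\Bigr)\ep^2\int_\Omega |\nabla v|^2\,dx + \Bigl(\tfrac{1}{p}-\tfrac{1}{m}\Bigr)\|v\|_{W^{1,p}_0(\Omega)}^p + \int_\Omega \Bigl(\tfrac{1}{m}f(v)v - F(v)\Bigr)dx.$$
Since $m>p>2$, both coefficients are positive and the $\ep^2$ contribution is nonnegative, so it can be discarded. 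By (f2) the integrand $\tfrac{1}{m}f(v)v - F(v)$ is nonnegative wherever $|v|\geq T$, and on $\{|v|\leq T\}$ it is bounded by a constant depending only on $f$ and $T$, yielding a universal lower bound $C\in\R$ independent of $\ep$ and $v$.

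For item (ii), I would invoke (f1) and (f3) to pick $\mu\in(f'_p(0),\lambda_{1,p})$ and apply estimate \eqref{estimateimportant1}: $f(t)t \leq \mu|t|^p + C_\mu|t|^q$ for all $t\in\R$. Combining with \eqref{eqnNN}, dropping the nonnegative $\ep^2$ term on the left, and using the variational characterization $\lambda_{1,p}\|v\|_{L^p}^p \leq \|v\|_{W^{1,p}_0(\Omega)}^p$ to absorb $\mu\|v\|_{L^p}^p$, gives
$$\Bigl(1 - \tfrac{\mu}{\lambda_{1,p}}\Bigr)\|v\|_{W^{1,p}_0(\Omega)}^p \leq C_\mu\|v\|_{L^q(\Omega)}^q.$$
The estimate \eqref{boundedawayfromzero} then follows by applying the Sobolev embedding $W^{1,p}_0(\Omega)\hookrightarrow L^q(\Omega)$ (available because $q<p^*$) on the right-hand side and exploiting $q>p$; the $L^q$ estimate \eqref{boundedawayfromzerolq} follows by applying the same embedding on the left to pass from $\|v\|_{W^{1,p}_0(\Omega)}^p$ down to a multiple of $\|v\|_{L^q(\Omega)}^p$, again cancelling using $q>p$.

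Part (iii) is then immediate from (ii) together with the continuity of $\gamma_\ep\colon W^{1,p}_0(\Omega)\to \R$, which is inherited from $J_\ep\in C^2$. Indeed, if $v_n\in\NN_\ep$ and $v_n\to v$ in $W^{1,p}_0(\Omega)$, then $\gamma_\ep(v)=\lim \gamma_\ep(v_n)=0$, while (ii) guarantees $\|v\|_{W^{1,p}_0(\Omega)}\geq \rho$, so $v\neq 0$ and $v\in\NN_\ep$.

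The only delicate point in the whole argument is the choice of $\mu$ in (ii): it must be chosen strictly below $\lambda_{1,p}$ so that the prefactor $1-\mu/\lambda_{1,p}$ is strictly positive, and this is precisely what the strict inequality $f'_p(0)<\lambda_{1,p}$ in (f3) provides. Uniformity of the constants in $\ep$ is automatic, since $\ep^2\geq 0$ lets us discard the $\ep$-dependent term in both (i) and (ii), while the remaining constants depend only on $f$, $\Omega$, $p$, $q$ and $m$.
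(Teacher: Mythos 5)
Your proof is correct and follows essentially the same route as the paper: the Nehari identity $J_\ep(v)=J_\ep(v)-\tfrac{1}{m}\gamma_\ep(v)$ plus (f2) for (i), the estimate \eqref{estimateimportant1} with $\mu<\lambda_{1,p}$ and the Sobolev/Poincar\'e inequalities for (ii), and continuity of $\gamma_\ep$ combined with (ii) for (iii). The one small variation is in deriving \eqref{boundedawayfromzerolq}: you apply the Sobolev embedding directly on the left of $(1-\mu/\lambda_{1,p})\|v\|_{W^{1,p}_0(\Omega)}^p\leq C_\mu\|v\|_{L^q(\Omega)}^q$ and cancel a power of $\|v\|_{L^q(\Omega)}$, while the paper instead reuses the already-established bound $\rho$ together with H\"older's inequality and a case distinction on whether $\|v\|_{L^q(\Omega)}\geq 1$; both are valid, and your version is marginally cleaner since it avoids the case split.
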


\begin{proof}
Let $\ep \in \R$ and $v\in \NN_{\ep}$. Using (f2) and the fact that $m>p>2$,
$$
\begin{aligned}
J_{\ep}(v) =& J_{\ep}(v) - \frac{1}{m}\gamma_{\ep}(v) \\
=& \ep^2\left(\frac{1}{2}- \frac{1}{m}\right)\int_{\Omega}|\nabla v|^2dx + \left(\frac{1}{p}- \frac{1}{m}\right)\int_{\Omega} |\nabla v|^p dx \\
&\hspace{5cm}- \frac{1}{m}\int_{\Omega}\left(m F(v) - f(v)v\right)dx\\
\geq &   \left(\frac{1}{p}- \frac{1}{m}\right)\int_{\Omega} |\nabla v|^p dx - \frac{1}{m}\int_{\{|v|\leq T\}}\left(m F(v) - f(v)v\right)dx.
\end{aligned}
$$

Since the function $[-T,T]\ni t\mapsto mF(t)-f(t)t$ is bounded, there exists $C\in \R$, depeding only on $f$ and $\Omega$, such that
$$
J_{\ep}(v) \geq \left(\frac{1}{p}- \frac{1}{m}\right)\int_{\Omega} |\nabla v|^p dx
+C$$
and this proves {\it(i)}.

\medskip
To prove {\it (ii)}, we proceed as follows. Let $\mu \in (f'_p(0), \lambda_{1,p})$ be fixed and choose $C_{\mu}>0$ so that \eqref{estimateimportant1} holds. Using that $v\in \NN_{\ep}$,
$$
\int_{\Omega}|\nabla v|^p dx \leq   
\int_{\Omega} \left(\ep^2 + |\nabla v|^{p-2}\right)|\nabla v|^2 dx
= \int_{\Omega}f(v)v dx
$$
and from \eqref{estimateimportant1} we find that
\begin{equation}\label{inequality0}
\int_{\Omega}|\nabla v|^p dx \leq   \mu\int_{\Omega}|v|^p dx + C_{\mu} \int_{\Omega} |v|^q dx. 
\end{equation}

From the Sobolev inequalities, there exists $\tilde{C}_{\mu}$ such that
$$
\int_{\Omega}|\nabla v|^p dx \leq \frac{ \mu}{\lambda_{1,p}}\int_{\Omega}|\nabla v|^p dx + \tilde{C}_{\mu}\left(\int_{\Omega} |\nabla v|^p dx \right)^\frac{q}{p}
$$
so that 
$$
\left(1-\frac{\mu}{\lambda_{1,p}}\right) \leq \tilde{C}_{\mu} \|v\|_{W^{1,p}_0(\Omega)}^{q-p}.
$$

This proves \eqref{boundedawayfromzero} with $\rho = \left(1-\frac{\mu}{\lambda_{1,p}}\right)^{\frac{1}{q-p}}$.

\medskip
To prove \eqref{boundedawayfromzerolq}, we argue by cases. If $\|v\|_{L^q(\Omega)} \geq 1$, then \eqref{boundedawayfromzerolq} is proven with any $\rho_0 \in (0,1]$. If $\|v\|_{L^q(\Omega)}\in (0,1)$, then $\|v\|_{L^q(\Omega)}^q<\|v\|_{L^q(\Omega)}^p$ and using \eqref{inequality0} and H\"{o}lder inequality, 
$$
\rho^p\leq \mu {\rm meas}(\Omega)^{\frac{q}{q-p}}\|v\|_{L^q(\Omega)}^p +  C_{\mu}\|v\|_{L^q(\Omega)}^q \leq \left(\mu + C_{\mu}\right)\|v\|_{L^p(\Omega)}^p,
$$
where ${\rm meas}(\Omega)$ is the Lebesgue measure of $\Omega$. 

\medskip 
This implies the existence of $\rho_0>0$ such that $\Vert v\Vert_{L^q(\Omega)}\geq \rho_0$ proving \eqref{boundedawayfromzerolq} and consequently concluding the proof of {\it (ii)}.

\medskip
Next, we prove {\it (iii)}. Let $\{v_n\}_{n\in \mathbb{N}} \subset \NN_{\ep}$ such that $v_n \to v$ strongly in $W^{1,p}_0(\Omega)$. Since $\gamma_{\ep} \in C^1(W^{1,p}_0(\Omega))$ and for any $n\in \mathbb{N}$, $\gamma_{\ep}(v_n) =0$, it follows that $\gamma_{\ep}(v)=0$. Finally, \eqref{boundedawayfromzero} yields that $\|v_n\|_{W^{1,p}_0(\Omega)} \geq \rho$ for every $n\in \N$. Therefore, $\|v\|_{W^{1,p}_0(\Omega)} \geq \rho>0$ and so $v\in \NN_{\ep}$. This completes the proof.
\end{proof} $\blacksquare$

\begin{lemma}\label{Mnonempty}
Let $f$ satisfy (f1)-(f4). For any $\ep \in \R$ and for any $v \in 
W^{1,p}_0(\Omega)$ such that $v \neq 0$ there exists a unique $\tau_{\ep,v} >0$ such that 
\begin{itemize}
\item[(i)] $\tau_{\ep,v}v \in \NN_{\ep}$,

\item[(ii)] The function $[0,\infty) \ni t \mapsto J_{\ep}(tv)$ is strictly increasing in $[0, \tau_{\ep,v})$ and strictly decreasing in $(\tau_{\ep,v}, \infty)$ and

\item[(iii)] if $DJ_\varepsilon(v)(v)>0$ then $\tau_{\varepsilon,v}>1$ and if $DJ_\varepsilon(v)(v)<0$ then $\tau_{\varepsilon,v}<1$.
\end{itemize}

\end{lemma}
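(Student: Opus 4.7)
My approach is to pass to the scalar fibering map $h(t) := J_\varepsilon(tv)$ for $t \geq 0$, show it has a unique positive critical point, and deduce (i)--(iii) from its sign analysis. Since $f \in C^1(\R)$ the map $h$ is $C^1$ on $[0,\infty)$ with
\[
h'(t) = \varepsilon^2 t\int_\Omega|\nabla v|^2\,dx + t^{p-1}\int_\Omega|\nabla v|^p\,dx - \int_\Omega f(tv)v\,dx,
\]
and a direct computation gives $\gamma_\varepsilon(tv) = t\,h'(t)$, so $\tau v \in \NN_\varepsilon$ with $\tau>0$ is equivalent to $h'(\tau) = 0$. For $t>0$ I would work with the renormalization
\[
g(t) := \frac{h'(t)}{t^{p-1}} = \varepsilon^2 t^{2-p}\int_\Omega|\nabla v|^2\,dx + \int_\Omega|\nabla v|^p\,dx - \int_\Omega \frac{f(tv)v}{t^{p-1}}\,dx,
\]
whose positive zeros coincide with those of $h'$.

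The heart of the argument is to show that $g$ is strictly decreasing on $(0,\infty)$. The first summand is nonincreasing (strictly if $\varepsilon \neq 0$, since $2-p < 0$) and the second is constant, so it suffices to prove that $t \mapsto \int_\Omega f(tv)v\,t^{1-p}\,dx$ is strictly increasing. For a.e.\ $x$ with $c := v(x) \neq 0$ one rewrites the integrand pointwise as
\[
\frac{f(tc)c}{t^{p-1}} = \mathrm{sign}(c)\,|c|^p\,\frac{f(tc)}{|tc|^{p-1}},
\]
and a short case split on the sign of $c$ combined with (f4) (the strict monotonicity of $s\mapsto f(s)/|s|^{p-1}$ on $\R\setminus\{0\}$) yields pointwise strict monotonicity in $t$ on $\{v \neq 0\}$; since $\{v \neq 0\}$ has positive measure, integrating preserves the strict inequality and $g$ is strictly decreasing.

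For the boundary behaviour of $g$, at $0^+$, if $\varepsilon \neq 0$ the term $\varepsilon^2 t^{2-p}\int|\nabla v|^2$ blows up while the remaining terms stay bounded by \eqref{estimateimportant1}; if $\varepsilon = 0$, choosing $\mu \in (f_p'(0),\lambda_{1,p})$ and using \eqref{estimateimportant1} together with the variational characterization of $\lambda_{1,p}$ yields
\[
g(t) \geq \bigl(1 - \mu/\lambda_{1,p}\bigr)\int_\Omega|\nabla v|^p\,dx - C_\mu\,t^{q-p}\int_\Omega|v|^q\,dx,
\]
which is strictly positive for all sufficiently small $t>0$. At $+\infty$, integrating (f2) as in \eqref{p-1superlinear} and using the boundedness of $f(s)s$ on $[-T,T]$ produces a universal lower bound $f(s)s \geq ma|s|^m - M'$ valid for every $s \in \R$, whence
\[
\int_\Omega \frac{f(tv)v}{t^{p-1}}\,dx \geq ma\,t^{m-p}\int_\Omega|v|^m\,dx - \frac{M'|\Omega|}{t^p} \longrightarrow +\infty
\]
as $t\to\infty$, since $m > p$ and $v \neq 0$; hence $g(t)\to -\infty$.

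Continuity, strict monotonicity and the limiting behaviour of $g$ then produce, via the intermediate value theorem, a unique $\tau_{\varepsilon,v}>0$ with $g(\tau_{\varepsilon,v}) = 0$, which is (i). Since $h'(t) = t^{p-1} g(t)$ and $h'(0) = 0$ (because $f(0)=0$ by (f3)), the sign of $h'$ on $(0,\infty)$ equals the sign of $g$, which gives (ii). Finally (iii) is immediate from $DJ_\varepsilon(v)v = h'(1) = g(1)$ together with the monotonicity of $g$ about its zero $\tau_{\varepsilon,v}$. I expect the main obstacle to be the pointwise monotonicity argument in the second paragraph, where the noninteger $p$-th power makes the sign bookkeeping in (f4) slightly more delicate than in the semilinear case $p=2$; the remaining analytical ingredients are essentially routine applications of the a-priori estimates \eqref{estimateimportant1} and \eqref{p-1superlinear} already recorded in the paper.
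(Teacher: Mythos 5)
Your proof is correct, and it takes a genuinely different route from the paper's at the key uniqueness step. The paper works with $g_v(t):=J_\varepsilon(tv)$ directly and shows that at any stationary point $t^*>0$ of $g_v$ one has $g_v''(t^*)<0$, using the differential form $f'(s)>(p-1)f(s)/s$ of (f4) after substituting the equation $g_v'(t^*)=0$; this forces every positive critical point to be a strict local maximum, hence there is at most one, and existence then follows from $g_v(t)\to-\infty$ and $g_v'(t)>0$ for small $t>0$. You instead renormalize, introducing $g(t):=g_v'(t)/t^{p-1}$, and prove that $g$ is strictly decreasing on $(0,\infty)$ via a pointwise argument on $\{v\neq0\}$ that uses the monotonicity form of (f4), i.e.\ $s\mapsto f(s)/|s|^{p-1}$ strictly increasing; your sign-bookkeeping with $\mathrm{sign}(c)|c|^p\,f(tc)/|tc|^{p-1}$ is correct, and the endpoint estimates ($g>0$ near $0^+$ from \eqref{estimateimportant1} and (f3), $g\to-\infty$ from \eqref{p-1superlinear} and the uniform lower bound $f(s)s\geq ma|s|^m-M'$) close the argument. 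The two approaches deliver the same conclusion, but yours gives slightly more at once --- the global strict monotonicity of $g$, rather than just the second-order information at critical points --- and it cleanly encapsulates the role of (f4) as a single monotonicity statement instead of re-deriving it through the second derivative; the paper's version has the small advantage of computing $g_v''$ explicitly, which is reused elsewhere (e.g.\ in Lemma~\ref{furtherpropofNN} and Lemma~\ref{nodal regions}).
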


\begin{proof}
Consider the function $g_{v}:[0,\infty) \to \R$ defined by
$$
\begin{aligned}
g_{v}(t):= &J_{\ep}(tv)\\
=& \frac{\ep^2 t^2}{2}\int_{\Omega}|\nabla v|^2 dx + \frac{t^p}{p}\int_{\Omega}|\nabla v|^pdx - \int_{\Omega}F(tv)dx \quad \hbox{for}\quad t\geq 0. 
\end{aligned}
$$

Since $J\in C^2(W^{1,p}_0(\Omega))$, it follows that $g_{v}\in C^2([0,\infty))$ with
\begin{equation}\label{derivative g}
\frac{d g_{v}}{dt}(t)=\ep^2 t\int_{\Omega}|\nabla v|^2 dx + t^{p-1}\int_{\Omega}|\nabla v|^pdx - \int_{\Omega}f(tv)vdx
\end{equation}

and
\begin{equation}\label{second derivative g}
\frac{d^2 g_{v}}{dt^2}(t)=\ep^2\int_{\Omega}|\nabla v|^2 dx + (p-1)t^{p-2}\int_{\Omega}|\nabla v|^pdx - \int_{\Omega}f'(tv)v^2dx
\end{equation}
for any $t\geq 0$.

\medskip
Let $t^*>0$ be such that  $\frac{d g_{v}}{dt}(t^*)=0$. From \eqref{derivative g} and \eqref{second derivative g},
\begin{align*}
\frac{d ^2g_{v}}{d t^2}(t^*)=&\ep^2\int_{\Omega}|\nabla v|^2 dx + (p-1)\left(\int_{\Omega}\frac{f(t^*v)}{t}vdx-\ep^2 \int_{\Omega}|\nabla v|^2 dx\right)\\
& \hspace{8cm} - \int_{\Omega}f'(t^*v)v^2dx\\
=&-(p-2)\ep^2\int_{\Omega}|\nabla v|^2dx+\int_{\Omega}\left( (p-1)\frac{f(t^*v)}{t^*v}-f'(t^*v)\right)v^2dx<0.
\end{align*}

Consequently, there exists at most one critical point of $g_{v}$ in $(0, \infty)$, and if such point exists, it must necessarily  be the unique global strict maximum of $g_v$ in $(0,\infty)$. Next, we show the existence of such a critical point. Using \eqref{p-1superlinear},
$$
\begin{aligned}
g_{v}(t) \leq \frac{\ep^2 t^2}{2}\int_{\Omega}|\nabla v|^2 dx + \frac{t^p}{p}\int_{\Omega}|\nabla v|^pdx - a t^m\int_{\Omega}|v|^m dx + b\,{\rm meas}(\Omega)  
\end{aligned}
$$
for any $t\geq 0$.

\medskip 
 
Since $m>p$, taking $t \to \infty$ yields
$$
\lim \limits_{t \to \infty}g_{v}(t)= - \infty,
$$
so that there exists $\tau_{\ep,v} \in [0,\infty)$ such that 
\begin{equation*}
g_{v}(\tau_{\ep,v})= \max \limits_{t \geq 0} g_{v}(t).
\end{equation*}

\medskip
Now we show that $\tau_{\ep,v}>0$. Observe that $g_v(0)=0$, so it suffices to prove that $\frac{d g_{v}}{dt}(t)>0$ for $t\in (0,\delta)$ with $\delta$ small enough. Indeed from \eqref{estimateimportant1} and \eqref{derivative g} we have for $t>0$
\begin{align} \label{derivative interior}
\begin{split}
\frac{d g_{v}}{dt}(t)&\geq t^{p-1}\left(\int_{\Omega}|\nabla v|^pdx-\int_{\Omega}\frac{f(tv)tv}{t^p}dx\right)\\
&\geq t^{p-1}\left(\int_{\Omega}|\nabla v|^pdx-\mu\int_{\Omega} |v|^pdx-C_\mu t^{q-p}\int_{\Omega}|v|^q dx\right)\\ 
&\geq t^{p-1}\left(\left(1-\frac{\mu}{\lambda_{1,p}}\right)\int_{\Omega}|\nabla v|^pdx-C_\mu t^{q-p}\int_{\Omega}|v|^q dx\right). 
\end{split}
\end{align}

%
%

From this it is clear that there exists the required $\delta>0$. Since $v \neq 0$, $g_{v}(\tau_{\ep,v})>0$ and so $\tau_{\ep,v}>0$. Therefore, $\frac{d g_{v}}{dt}(\tau_{\ep,v})=0$, i.e. $DJ_{\ep}(\tau_{\ep,v}v)(\tau_{\ep,v}v)=0$ and hence $\tau_{\ep,v}v \in \NN_{\ep}$ as claimed. 

\medskip
Finally, from the previous discussion, we conclude that $g_{v}$ is strictly increasing in $(0,\tau_{\ep,v})$ and strictly decreasing in $(\tau_{\ep,v},\infty)$. On the other hand, $\frac{dg_v}{dt}(1)=DJ_\varepsilon(v)v$ and $\tau_{\ep,v}>0$ is the only critical point of $g_v$. Thus, if $ \frac{dg_v}{dt}(1)>0$, then $0<1<\tau_{\ep,v})$. If $\frac{dg_v}{dt}(1)<0$, then $1<\tau_{\ep,v}<\infty$ and this concludes the proof. $\blacksquare$
\end{proof}

\begin{lemma}\label{furtherpropofNN}
Let $f$ satisfy hypotheses (f1)-(f4) and let $\ep \in \R$. Then,
\begin{itemize}
\item[(i)] $\mathcal{N}_{\ep}$ is a $C^1$-manifold embedded in $W^{1,p}_0(\Omega)$,
\item[(ii)] the tangent space $T_{v}\NN_{\ep}$ of $\NN_{\ep}$ at $v$ is the set $\{\varphi \in W^{1,p}_0(\Omega)\,:\, D\gamma_{\ep}(v)\varphi =0\}$ and

\item[(iii)] for any $v\in \NN_{\ep}$, $v\notin T_{v}\NN_{\ep}$.

\end{itemize}
\end{lemma}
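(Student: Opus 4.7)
The plan is to apply the regular value theorem to the $C^1$ functional $\gamma_\ep$ on the open set $W^{1,p}_0(\Omega)\setminus\{0\}$, since by definition $\NN_\ep = \gamma_\ep^{-1}(0)\cap(W^{1,p}_0(\Omega)\setminus\{0\})$. All three claims will follow once we show that $D\gamma_\ep(v)\neq 0$ at every $v\in\NN_\ep$; in fact we will show the stronger statement that $D\gamma_\ep(v)v<0$, which simultaneously yields the nondegeneracy needed for the manifold structure and the noncontainment $v\notin T_v\NN_\ep$ stated in (iii).

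The key computation is to evaluate \eqref{derivgamma} at $\varphi=v$, namely
\begin{equation*}
D\gamma_\ep(v)v = 2\ep^2\!\int_\Omega |\nabla v|^2\,dx + p\!\int_\Omega|\nabla v|^p\,dx - \int_\Omega f'(v)v^2\,dx - \int_\Omega f(v)v\,dx,
\end{equation*}
and then to use the Nehari identity \eqref{eqnNN} to replace $\int_\Omega f(v)v\,dx$ by $\int_\Omega(\ep^2|\nabla v|^2+|\nabla v|^p)\,dx$, which yields
\begin{equation*}
D\gamma_\ep(v)v = \ep^2\!\int_\Omega|\nabla v|^2\,dx+(p-1)\!\int_\Omega|\nabla v|^p\,dx - \int_\Omega f'(v)v^2\,dx.
\end{equation*}
Now I would invoke hypothesis (f4) in the multiplied form $f'(t)t^2>(p-1)f(t)t$, valid a.e.\ on $\{v\neq 0\}$, to obtain $\int_\Omega f'(v)v^2\,dx>(p-1)\int_\Omega f(v)v\,dx$, with strict inequality since $v\not\equiv 0$. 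Substituting the Nehari identity once more gives
\begin{equation*}
D\gamma_\ep(v)v < -(p-2)\ep^2\!\int_\Omega|\nabla v|^2\,dx \leq 0.
\end{equation*}
This is essentially the same calculation already carried out in Lemma \ref{Mnonempty} for $\frac{d^2 g_v}{dt^2}$ at a critical point (indeed, writing $\gamma_\ep(tv)=t\,g_v'(t)$ one sees that $D\gamma_\ep(v)v=g_v''(1)$ whenever $v\in\NN_\ep$), so no new estimate is required.

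From here the three conclusions follow routinely. The bound $D\gamma_\ep(v)v<0$ shows that the continuous linear functional $D\gamma_\ep(v)\in (W^{1,p}_0(\Omega))^*$ is not the zero functional, so $0$ is a regular value of $\gamma_\ep|_{W^{1,p}_0(\Omega)\setminus\{0\}}$; the implicit function theorem in Banach spaces then gives (i), namely that $\NN_\ep$ is a $C^1$-embedded submanifold of codimension one. For (ii), the standard identification of the tangent space of a regular level set yields $T_v\NN_\ep=\ker D\gamma_\ep(v)=\{\varphi\in W^{1,p}_0(\Omega):D\gamma_\ep(v)\varphi=0\}$. Finally, (iii) is immediate from $D\gamma_\ep(v)v<0$, because $v\in T_v\NN_\ep$ would require $D\gamma_\ep(v)v=0$.

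There is no serious obstacle: the whole argument reduces to the sign computation above, and the only point requiring a small amount of care is the bookkeeping when applying (f4) (the strict inequality $f'(t)t^2>(p-1)f(t)t$ must be integrated over $\{v\neq 0\}$, which has positive measure since $v\in\NN_\ep$ implies $v\not\equiv 0$). Everything else—the implicit function theorem, the tangent space identification, and the deduction of (iii)—is standard submanifold theory in Banach spaces.
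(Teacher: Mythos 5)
Your proposal is correct and follows essentially the same route as the paper: compute $D\gamma_\ep(v)v$ from \eqref{derivgamma}, simplify using the Nehari identity \eqref{eqnNN}, invoke (f4) to get $D\gamma_\ep(v)v < -(p-2)\ep^2\int_\Omega|\nabla v|^2\,dx\leq 0$, and then apply the Implicit Function Theorem and standard level-set facts. The bookkeeping is slightly different (you substitute the Nehari identity twice instead of once), but the estimate and its consequences are identical to the paper's argument.
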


\begin{proof}
First, we prove {\it (i) and (ii)} by showing that zero is regular value of $\gamma_{\ep}$. Let $v\in \NN_{\ep}$ be arbitrary. Using \eqref{derivgamma} and \eqref{eqnNN},
\begin{align*}
D\gamma_{\ep}(v)(v)&=\int_{\Omega} \left(2 \ep^2|\nabla v|^2+p|\nabla v|^p\right) dx - \int_{\Omega}\left(f'(v)v^2-f(v)v \right) dx \\
&= -(p-2)\ep^2\int_{\Omega} |\nabla v|^2 dx  +  \int_{\Omega}\left( (p-1)f(v)v- f'(v)v^2 \right)dx
\end{align*}
and from hypothesis (f4),
\begin{equation}\label{zeroregvaluegamma}
D\gamma_{\ep}(v)(v)\red{<} \int_{\{v\neq 0\}}\left((p-1)\frac{f(v)}{v}-f'(v)\right)v^2 dx <0.
\end{equation}

Since $W^{1,p}_0(\Omega)= \R v \oplus {\rm Ker} D\gamma_{\ep}(v)$ and $D\gamma_{\ep}(v)|_{\R v}:\R v \to \R $ is bijective, the {\it Implicit Function Theorem} and the fact that $v \in \NN_{\ep}$ is arbitrary yield part {\it (i)}.

\medskip

We also conclude that for any $v\in \NN_{\ep}$,
\begin{equation*}
T_{v}\NN_{\ep} = {\rm Ker} D\gamma_{\ep}(v)=\{\varphi \in W^{1,p}_0(\Omega)\,:\, D\gamma_{\ep}(v)\varphi =0\}
\end{equation*}
proving {\it (ii)}.  To prove {\it (iii)}, observe from \eqref{zeroregvaluegamma} that $v\notin T_v \mathcal{N}_{\ep}$. This completes the proof of the lemma.   $\blacksquare$

\end{proof}

\medskip
Denote
$$
S_{\infty}:= \left\{v\in W^{1,p}_0(\Omega)\,:\, \|v\|_{W^{1,p}_0(\Omega)}=1\right\}.
$$

\begin{lemma}\label{NNdiffeomSphere}
Let $f$ satisfy (f1)-(f4). For any $\ep \in \R$, the manifold $\NN_{\ep}$ is diffeomorphic to $S_{\infty}$. In particular, $\NN_{\ep}$ is path-connected.
\end{lemma}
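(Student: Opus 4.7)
The plan is to exhibit an explicit diffeomorphism between $S_\infty$ and $\NN_\ep$ using the scaling map furnished by Lemma \ref{Mnonempty}. Concretely, define
\[
\Phi \colon S_\infty \to \NN_\ep, \qquad \Phi(v) := \tau_{\ep,v}\, v,
\]
where $\tau_{\ep,v}>0$ is the unique positive number produced by Lemma \ref{Mnonempty}. Its candidate inverse is
\[
\Psi \colon \NN_\ep \to S_\infty, \qquad \Psi(u) := \frac{u}{\|u\|_{W^{1,p}_0(\Omega)}},
\]
which is well defined and smooth because, by Lemma \ref{QualitLemma}(ii), elements of $\NN_\ep$ are uniformly bounded away from $0$ in $W^{1,p}_0(\Omega)$. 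The identities $\Psi\circ\Phi=\mathrm{id}_{S_\infty}$ and $\Phi\circ\Psi=\mathrm{id}_{\NN_\ep}$ follow at once from the uniqueness of $\tau_{\ep,v}$ in Lemma \ref{Mnonempty}: any $u\in\NN_\ep$ satisfies $u=\|u\|\cdot (u/\|u\|)$, and $\|u\|$ is forced to coincide with $\tau_{\ep,\,u/\|u\|}$.

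The step that carries the actual content is the $C^1$-regularity of $\Phi$, for which I would invoke the implicit function theorem on the auxiliary map
\[
G\colon (0,\infty)\times W^{1,p}_0(\Omega) \to \R, \qquad G(t,v) := \gamma_{\ep}(tv).
\]
Since $\gamma_\ep\in C^1$ (noted just before Lemma \ref{QualitLemma}), so is $G$. For $v\in S_\infty$ one has $G(\tau_{\ep,v},v)=0$, and
\[
\partial_t G(t,v) = D\gamma_{\ep}(tv)(v) = \frac{1}{t}\, D\gamma_{\ep}(tv)(tv).
\]
Evaluating at $t=\tau_{\ep,v}$ and using that $\tau_{\ep,v} v\in\NN_\ep$, the key inequality \eqref{zeroregvaluegamma} from the proof of Lemma \ref{furtherpropofNN} gives $D\gamma_\ep(\tau_{\ep,v}v)(\tau_{\ep,v}v)<0$, hence $\partial_t G(\tau_{\ep,v},v)<0$. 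The implicit function theorem then provides a local $C^1$ solution $v\mapsto \tau_{\ep,v}$, which by uniqueness agrees globally with the one produced by Lemma \ref{Mnonempty}. Thus $v\mapsto \tau_{\ep,v}$ is of class $C^1$ on $S_\infty$ and therefore so is $\Phi$.

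This shows $\Phi$ is a $C^1$-diffeomorphism between $S_\infty$ and $\NN_\ep$. For the final assertion, the unit sphere $S_\infty$ of the infinite-dimensional Banach space $W^{1,p}_0(\Omega)$ is path-connected (in fact contractible), so $\NN_\ep$ inherits path-connectedness through $\Phi$. The main obstacle is purely the regularity of $v\mapsto\tau_{\ep,v}$, and this is already packaged in the strict sign given by hypothesis (f4) via \eqref{zeroregvaluegamma}; everything else is a bookkeeping verification of the inverse identities and smoothness of $\Psi$.
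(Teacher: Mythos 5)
Your proposal is correct and follows essentially the same route as the paper: define $\Phi(v)=\tau_{\ep,v}v$, exhibit the explicit inverse $u\mapsto u/\|u\|$, and obtain $C^1$ regularity of $v\mapsto\tau_{\ep,v}$ via the Implicit Function Theorem applied to $\gamma_\ep(tv)$, with the strict sign of $\partial_t$ coming from (f4). The only minor differences are cosmetic — you reuse the inequality \eqref{zeroregvaluegamma} from Lemma \ref{furtherpropofNN} rather than recomputing it, and the paper also carries $\ep$ as an explicit variable in the auxiliary map to record $C^1$ dependence on $\ep$ for later use (see the remark following the lemma).
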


\begin{proof}
In this proof we use the same notations as in Lemma \ref{Mnonempty}. Let $\ep \in \R$ be fixed and consider the function $\lambda_{\ep}: S_{\infty} \to \NN_{\ep}$ defined by
$$
\lambda_{\ep}(v):= \tau_{\ep,v}v \quad \hbox{for} \quad v\in S_{\infty},
$$ 
where $\tau_{\ep,v}>0$ is the unique positive value such that $\tau_{\ep,v}v \in \NN_{\ep}$ as described in Lemma \ref{Mnonempty}. 

\medskip
It is direct to verify that $\lambda_{\ep} : S_{\infty} \to \NN_{\ep}$ is injective. On the other hand, given ${\rm v}\in \NN_{\ep}$, $\tau_{\ep,{\rm v}}=1$. Set
$$
v:=\frac{{\rm v}}{\|{\rm v}\|_{W^{1,p}_{0}(\Omega)}} \in S_{\infty}
$$
so that $\tau_{\ep,{v}}= \|{\rm v}\|_{W^{1,p}_0(\Omega)}$ and $\lambda_{\ep}\left({v}\right) = \tau_{\ep,{v}} {v} = {\rm v}$. Therefore, $\lambda_{\ep}$ is bijective.

\medskip
Next, we prove that $\lambda_{\ep}\in C^1(S_{\infty})$. Consider the function 
$$
\xi:\R \times (0,\infty)\times \left(W^{1,p}_0(\Omega)\setminus \{0\}\right)\to \R, \qquad \xi(\varepsilon, t,v):=\gamma_{\ep}(tv).
$$ 

\medskip
Observe that for any $\ep \in \R$, $t>0$ and any $v\in W^{1,p}_0(\Omega)$ with $v\neq 0$, 
$$
\xi(\varepsilon,t,v)=0 \quad \Longleftrightarrow \quad tv\in \NN_{\ep} \quad \Longleftrightarrow \quad \lambda_{\ep}(v)=tv.
$$

Also, $\xi$ is a $C^1-$function and from \eqref{derivgamma} for any $\ep \in \R$, any $t>0$ and $v\in W^{1,p}_0(\Omega)$ with $v\neq 0$ and $\xi(\varepsilon, t,v)=0$,
\begin{align*}
\frac{\partial \xi}{\partial t}(\varepsilon, t,v)&= D\gamma_{\ep}(tv)v\\
&=\int_{\Omega} \left(2 \ep^2 t|\nabla v|^2+pt^{p-1}|\nabla v|^p\right) dx - \int_{\Omega}\left(f'(tv)t v^2-f(tv)v \right) dx \\
&= -(p-2)\ep^2 t\int_{\Omega} |\nabla v|^2 dx  +  \int_{\Omega}\left( (p-1)f(t v)v- f'(tv)t v^2 \right)dx.
\end{align*}

From hypothesis (f4),
\begin{equation*}
\frac{\partial \xi}{\partial t}(\varepsilon, t,v)< \int_{\{v\neq 0\}}\left((p-1)\frac{f(tv)}{tv}-f'(tv)\right)t v^2 dx <0.
\end{equation*}

Thus, the {\it Implicit Function Theorem} yields that the mapping 
\begin{equation*}
(\varepsilon,v) \mapsto \tau_{\ep,v}>0
\end{equation*}
belongs to $C^1\left(\R \times \left( W^{1,p}_0(\Omega)\setminus\{0\}\right)\right)$ and we conclude, in particular, that $\lambda_{\ep} \in C^1(S_{\infty})$. Finally, from Lemma \ref{QualitLemma}, ${\rm Img}(\lambda_{\ep})\subset W^{1,p}_0(\Omega)- B_{\rho}(0)$ and since $(\lambda_\varepsilon)^{-1}(\rm v)=\frac{{\rm v}}{\|{\rm v}\|_{W^{1,p}_{0}(\Omega)}}$, we conclude that $\lambda_{\ep}$ is a diffeomorphism. $\blacksquare$
\end{proof}

{\bf Remark.} An important corollary of the previous proof, is the $C^1-$ dependence on $\ep \in \R$ of $\tau_{\varepsilon,v}$.\\

%
%
%
%
%

Now, we study the set $\MM_{\ep}$ defined in \eqref{Hemispheres} and the minimization problem \eqref{leastnodalenergyslncondition}. Observe that 
$$
\begin{aligned}
\MM_{\ep}=& \{v\in W^{1,p}_0(\Omega)\,:\, v^+,v^- \neq 0, \quad \gamma_{\ep}(v^+) = \gamma_{\ep}(v^-)=0\}\\
=& \{v\in W^{1,p}_0(\Omega)\,:\, v^+,v^- \neq 0, \quad DJ_{\ep}(v)v^+ = DJ_{\ep}(v)v^- =0\},
\end{aligned}
$$
where we recall that for any $v\in W^{1,p}_0(\Omega)$,
$$
J_{\ep}(v)= \int_{\Omega}\left(\frac{\ep^2}{2}|\nabla v|^2 + \frac{1}{p}|\nabla v|^p - F(v)\right)dx.
$$

\begin{lemma}\label{alpha=beta}
Let $f$ satisfy (f1)-(f4). For any $\ep \in \R$, the numbers
$$
\alpha_{\ep}:= \inf \limits_{v\in \MM_{\ep}} J_{\ep}(v)
\quad \hbox{and}
\quad \beta_{\ep}:= \inf \limits_{v\in W^{1,p}_0(\Omega), \,\,
	v^+ ,\, v^-\neq 0} \left(\max \limits_{t,\,s > 0} J_{\ep}(tv^+ + sv^-)\right),
$$
are well defined and $\alpha_{\ep} = \beta_{\ep}$.
\end{lemma}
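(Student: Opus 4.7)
The plan rests on the decomposition identity that whenever $v^+,v^- \neq 0$, the functions $tv^+$ and $sv^-$ have essentially disjoint supports and orthogonal gradients a.e., so
\begin{equation*}
J_{\varepsilon}(tv^+ + sv^-) \;=\; J_{\varepsilon}(tv^+) + J_{\varepsilon}(sv^-) \;=\; g_{v^+}(t) + g_{v^-}(s)
\end{equation*}
for all $t,s\geq 0$, using the notation $g_{w}(t):=J_{\varepsilon}(tw)$ from Lemma \ref{Mnonempty} (note $F(0)=0$, so the $F$-integral splits cleanly over $\{v>0\}\cup\{v<0\}\cup\{v=0\}$). Given this, the inner supremum defining $\beta_{\varepsilon}$ decouples into two independent one-variable problems to which Lemma \ref{Mnonempty} applies, producing unique maximizers $\tau_{\varepsilon,v^+},\tau_{\varepsilon,v^-}>0$. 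I will use this as the single engine of the proof.

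First, I would dispatch well-definedness. For any $v\in\MM_{\varepsilon}$, both $v^\pm\in\NN_{\varepsilon}$, so Lemma \ref{QualitLemma}(i) gives $J_{\varepsilon}(v^\pm)\geq (\tfrac{1}{p}-\tfrac{1}{m})\|v^\pm\|_{W^{1,p}_0}^p+C\geq C$, and by the splitting identity $J_{\varepsilon}(v)=J_{\varepsilon}(v^+)+J_{\varepsilon}(v^-)\geq 2C$, hence $\alpha_{\varepsilon}>-\infty$. For $\beta_{\varepsilon}$, given any admissible $v$, the inner max is attained at $(\tau_{\varepsilon,v^+},\tau_{\varepsilon,v^-})$ and equals $J_{\varepsilon}(w)$ where $w:=\tau_{\varepsilon,v^+}v^++\tau_{\varepsilon,v^-}v^-\in\MM_{\varepsilon}$, so the inner max lies in $[\alpha_{\varepsilon},+\infty)$, and in particular $\beta_{\varepsilon}>-\infty$.

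Next, the two inequalities, which now come for free. The inequality $\beta_{\varepsilon}\geq\alpha_{\varepsilon}$ follows immediately from the construction above: every inner supremum realizes the energy of some element of $\MM_{\varepsilon}$, so it is bounded below by $\alpha_{\varepsilon}$, and taking infimum preserves the bound. Conversely, for $\beta_{\varepsilon}\leq\alpha_{\varepsilon}$, pick any $v\in\MM_{\varepsilon}$; then $v^\pm\in\NN_{\varepsilon}$, which by the uniqueness clause of Lemma \ref{Mnonempty} forces $\tau_{\varepsilon,v^+}=\tau_{\varepsilon,v^-}=1$. Hence
\begin{equation*}
\max_{t,s>0}J_{\varepsilon}(tv^++sv^-) \;=\; g_{v^+}(1)+g_{v^-}(1) \;=\; J_{\varepsilon}(v),
\end{equation*}
and since $v$ is admissible for the outer infimum in the definition of $\beta_{\varepsilon}$, we get $\beta_{\varepsilon}\leq J_{\varepsilon}(v)$; taking infimum over $\MM_{\varepsilon}$ yields $\beta_{\varepsilon}\leq\alpha_{\varepsilon}$.

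I do not anticipate any real obstacle: the whole argument is a clean reduction to Lemma \ref{Mnonempty} via the support-disjointness decomposition of $J_{\varepsilon}$ on $v^+$ and $v^-$. The only mildly subtle point worth double-checking in the write-up is that the supremum over $t,s>0$ really is a maximum (so that $\beta_{\varepsilon}$ is expressible via a max), which is precisely Lemma \ref{Mnonempty}(ii) applied separately to $v^+$ and $v^-$; this justifies replacing $\sup$ with $\max$ as in the statement of \eqref{nodalcharacteerization}.
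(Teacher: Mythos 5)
Your proof is correct and takes essentially the same route as the paper: both hinge on the splitting $J_{\varepsilon}(tv^+ + sv^-) = g_{v^+}(t) + g_{v^-}(s)$, then invoke Lemma \ref{Mnonempty} to get unique one-variable maximizers, and finish by observing that for $v\in\MM_{\varepsilon}$ the scalars $\tau_{\varepsilon,v^\pm}$ equal $1$. The paper phrases the decoupling through a two-variable auxiliary function $h_{\varepsilon}(t,s)$ and discusses its critical point and coercivity in $(t,s)$, whereas you apply Lemma \ref{Mnonempty}(ii) directly to $v^+$ and $v^-$ separately; this is a streamlining, not a different argument. The only cosmetic point worth fixing is the order of the well-definedness argument: you bound $J_{\varepsilon}$ on $\MM_{\varepsilon}$ before establishing $\MM_{\varepsilon}\neq\emptyset$, whereas the latter (which you do supply a few lines later via $w=\tau_{\varepsilon,v^+}v^++\tau_{\varepsilon,v^-}v^-\in\MM_{\varepsilon}$) should logically come first.
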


\begin{proof}
Let $\ep \in \R$ be arbitrary, but fixed and let $v\in W^{1,p}_0(\Omega)$ be also arbitrary and such that $v^+,v^- \neq 0$. Consider the function $h_{\ep}:[0,\infty)\times [0,\infty) \to \R$ defined by
$$
h_{\ep}(t,s):= J_{\ep}(t v^+ + sv^-) \quad \hbox{for}\quad t,s \geq 0.
$$

Observe that
$$
\begin{aligned}
h_{\ep}(t,s) =&\int_{\Omega} \left( \frac{\ep^2 t^2}{2} |\nabla v^+|^2 + \frac{t^p}{p}|\nabla v^+|^p -  F(t v^+) \right) dx \\
& + \int_{\Omega} \left( \frac{\ep^2 s^2}{2} |\nabla v^-|^2 + \frac{s^p}{p}|\nabla v^-|^p -  F(s v^-) \right) dx
\end{aligned}
$$ 
so that $h_{\ep}(t,s)=J_{\ep}(tv^+) + J_{\ep}(sv^-)$ for any $t,s \geq 0$. Using the the notation of the proof of Lemma \ref{Mnonempty} we get the decomposition $h_{\ep}(t,s)=g_{v^+}(t)+g_{v^-}(s)$. Also, since $J_{\ep}\in C^2(W^{1,p}_0(\Omega))$, $h_{\ep}\in C^2\left([0,\infty)\times[0,\infty)\right)$ and
\begin{equation*}
\begin{aligned}
\nabla h_{\ep}(t,s)
=&\left(DJ_{\ep}(tv^+)v^+, DJ_{\ep}(sv^-)v^-\right)=\left(\frac{d g_{v^+}}{dt}(t),\frac{d g_{v^-}}{ds}(s)\right).
\end{aligned}
\end{equation*}

Hence, since $g_{v^+}$ and $g_{v^-}$ have only one critical point each, namely $t_{\ep,v}>0$ and $s_{\ep,v}>0$, respectively, we have that this pair provides the unique critical point of $h$. On the other hand, we have that $h_{\ep}(t,s)= g_{v^+}(t)+g_{v^-}(s)\to - \infty $, as $|(t,s)| \to + \infty$. Also, $h_{\ep}(0,0)=0$ and by \eqref{derivative interior}, we find $\delta>0$ small and such that if $s\geq 0$ and $t \in (0,\delta)$ then $\partial_{t}h_{\ep}(t,s)>0$. Similarly, if $t\geq 0$ and  $s\in (0,\delta)$ then $\partial_{s}h_{\ep}(t,s)>0$. 

\medskip
We conclude that for any $v\in W^{1,p}_0(\Omega)$, with $v^+,v^- \neq 0$, there exists a unique pair $(t_{\ep,v},s_{\ep,v}) \in (0,\infty) \times (0,\infty)$ such that
$$
J_{\ep}(t_{\ep,v} v^+ + s_{\ep,v}v^-) = \max \limits_{t,s \geq 0} J_{\ep}(tv^+ + sv^-)=\max \limits_{t,s > 0} J_{\ep}(tv^+ + sv^-).
$$

In particular, $t_{\ep,v} v^+ + s_{\ep,v} v^- \in \MM_{\ep}$. From this fact it follows that $\MM_{\ep}$ is non-empty and combining it with \eqref{coercivity} it follows that $\alpha_{\ep}$ and $\beta_{\ep}$ are well defined.

\medskip
Moreover,
$$
\begin{aligned}
\alpha_{\ep} \leq & J_{\ep}(t_{\ep,v} v^+ + s_{\ep,v} v^-)\\
=& \max \limits_{t,s \geq 0} J_{\ep}(tv^+ + sv^-).
\end{aligned}
$$

Since $v$ is arbitrary, we conclude that $\alpha_{\ep}\leq \beta_{\ep}$.

\medskip
Next, we prove the reverse inequality. Let $v\in \MM_{\ep}$ be arbitrary. Since $v^+,v^- \in \NN_{\ep}$, the proof of Lemma \ref{Mnonempty} yields that $J_\varepsilon(v^+)=\max \limits_{t\geq 0} J_{\ep}(tv^+)$ and $J_\varepsilon(v^-)=\max \limits_{s\geq 0} J_{\ep}(sv^-)$, i.e. $\tau_{\ep,v^+}=\tau_{\ep,v^-} =1$.

\medskip
Let $t_{\ep,v},s_{\ep,v}>0$ be such that $J_{\ep}(t_{\ep,v}v^+ + s_{\ep,v}v^-)= \max \limits_{t,s\geq 0}J_{\ep}(tv^+ + sv^-)$. Then, 
$$
\max \limits_{t,s \geq 0} J_{\ep}(tv^+ + sv^-)=\max \limits_{t,s \geq 0} \{J_{\ep}(tv^+) +J_\varepsilon (sv^-)\}=J_\varepsilon(v^+)+J_\varepsilon(v^-)=J_\varepsilon(v),
$$
implying that $(t_{\ep,v},s_{\ep,v})=(1,1)$.

\medskip
Therefore, 
$$
\begin{aligned}
\beta_{\ep} \leq & \max \limits_{t,s \geq 0} J_{\ep}(tv^+ + sv^-)\\
=& J_{\ep}(v). 
\end{aligned}
$$

Since $v\in \MM_{\ep}$ is arbitrary, we find that $\beta_{\ep} \leq \alpha_{\ep}$. This concludes the proof.  $ \blacksquare$
\end{proof}

\medskip

\medskip
As pointed out in \cite{BartschWeth2003}, due to the lack of regularity of the functions 
$$
W^{1,p}_0(\Omega)\ni v \mapsto v^{\pm} \in W^{1,p}_0(\Omega),
$$
(even in the case $p=2$), the set $\MM_{\ep}$ is not a submanifold of $W^{1,p}_0(\Omega)$ and hence it is not automatically clear that $u \in \MM_{\ep}$ solving \eqref{leastnodalenergyslncondition} is a critical point of $J_{\ep}$. For this reason we use a version of the deformation lemma (see Lemma 2.13 in \cite{WILLEMBOOK}) to show that any local minimizer of $J_{\ep}|_{\MM_{\ep}}$ is a critical point of $J_{\ep}$.

\medskip

\begin{lemma}\label{Deformation}
Let $f$ satisfy (f1)-(f4) and let $\ep \in \R$. Assume $u\in \MM_{\ep}\cap V$, where $V\subset W^{1,p}_0(\Omega)$ is an open set.  If 
$$
J_{\ep}(u)= \min \limits_{v\in \MM_{\ep}\cap V}J_\varepsilon(v),
$$
then $u$ is a critical point of $J_\varepsilon$ in $W^{1,p}_0(\Omega)$.
\end{lemma}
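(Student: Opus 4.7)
I would argue by contradiction: assume $DJ_{\ep}(u)\neq 0$ in $W^{1,p}_0(\Omega)^*$. By continuity of $DJ_\ep$ and the fact that $V$ is open, there exist $\mu,r>0$ such that $\overline{B_{2r}(u)}\subset V$ and $\|DJ_\ep(w)\|\geq \mu$ for every $w\in \overline{B_{2r}(u)}$. The plan is to combine this lower bound with the quantitative deformation lemma (Lemma 2.3 in \cite{WILLEMBOOK}) and a topological/degree argument built on the two-parameter family $(t,s)\mapsto tu^++su^-$, in order to produce an element of $\MM_\ep\cap V$ whose energy is strictly below $J_\ep(u)$.

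Set $c:=J_\ep(u)$. Since $u\in \MM_\ep$, the analysis carried out in the proof of Lemma \ref{alpha=beta} shows that the function $h_\ep(t,s)=J_\ep(tu^++su^-)=g_{u^+}(t)+g_{u^-}(s)$ has $(1,1)$ as its unique (strict) global maximum on $[0,\infty)^2$, with value exactly $c$. Fix a small $\delta_0\in(0,1)$ and set $Q:=[1-\delta_0,1+\delta_0]^2$. By the strict maximum property, one has
\begin{equation*}
\kappa:=c-\max\{h_\ep(t,s):(t,s)\in \partial Q\}>0.
\end{equation*}
Now choose $\delta_0$ even smaller if necessary so that $\|tu^++su^--u\|_{W^{1,p}_0(\Omega)}\leq r$ for every $(t,s)\in Q$, and pick $\epsilon\in(0,\kappa/2)$ sufficiently small that the hypothesis $\|DJ_\ep\|\geq 8\epsilon/r$ of the deformation lemma holds on $J_\ep^{-1}([c-2\epsilon,c+2\epsilon])\cap \overline{B_{2r}(u)}$. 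This yields a continuous $\eta\in C([0,1]\times W^{1,p}_0(\Omega),W^{1,p}_0(\Omega))$ with the standard properties: $\eta(t,\cdot)$ is the identity outside $J_\ep^{-1}([c-2\epsilon,c+2\epsilon])\cap \overline{B_{2r}(u)}$, $J_\ep\circ \eta(\cdot,w)$ is non-increasing, and $J_\ep(\eta(1,w))<c$ whenever $J_\ep(w)\leq c$ and $\|w-u\|\leq r$.

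The topological step is to define $\Psi:Q\to\R^2$ by
\begin{equation*}
\Psi(t,s):=\Bigl(\gamma_\ep(\eta(1,tu^++su^-)^+),\ \gamma_\ep(\eta(1,tu^++su^-)^-)\Bigr),
\end{equation*}
and to find $(t^*,s^*)\in Q$ with $\Psi(t^*,s^*)=0$. On $\partial Q$, $h_\ep\leq c-\kappa<c-2\epsilon$, so $\eta(1,\cdot)$ is the identity there and $\Psi(t,s)=(\gamma_\ep(tu^+),\gamma_\ep(su^-))=(t\,g'_{u^+}(t),s\,g'_{u^-}(s))$. Because $g_{u^\pm}$ has $1$ as its unique critical point with $g''_{u^\pm}(1)<0$ (see the proof of Lemma \ref{Mnonempty}), the first coordinate of $\Psi$ is strictly positive on $\{1-\delta_0\}\times[1-\delta_0,1+\delta_0]$ and strictly negative on $\{1+\delta_0\}\times[1-\delta_0,1+\delta_0]$, and similarly for the second coordinate. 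This sign pattern gives a nonzero Brouwer degree $\deg(\Psi,\interior Q,0)\neq 0$, hence a zero $(t^*,s^*)$ in the interior of $Q$. (As a sanity check, for small enough $\delta_0$ and $\epsilon$ the function $w^*:=\eta(1,t^*u^++s^*u^-)$ remains close to $u$ in $W^{1,p}_0(\Omega)$, so both $(w^*)^+$ and $(w^*)^-$ are nonzero, and therefore $w^*\in \MM_\ep$; moreover $w^*\in B_{2r}(u)\subset V$.)

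To conclude I would compare energies: since $h_\ep(t^*,s^*)\leq c$ and $\|t^*u^++s^*u^--u\|\leq r$, the fourth property of $\eta$ above gives $J_\ep(w^*)<c=J_\ep(u)$. This contradicts the hypothesis $J_\ep(u)=\min_{\MM_\ep\cap V}J_\ep$, proving that $DJ_\ep(u)=0$. The main obstacle I anticipate is the bookkeeping at the topological step: I must arrange $\delta_0$ and $\epsilon$ small simultaneously so that (a) the deformation is nontrivial only on points whose $\pm$-parts are comparable to those of $u$, (b) $(w^*)^\pm\neq 0$, and (c) $w^*\in V$. All three are quantitative consequences of the strict maximality of $(1,1)$ for $h_\ep$ together with continuity of the map $w\mapsto (w^+,w^-)$ in $W^{1,p}_0(\Omega)$, but they must be verified carefully before invoking the degree argument.
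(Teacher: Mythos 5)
Your proposal is correct and follows essentially the same scheme as the paper's proof: contradict the gradient lower bound with Willem's quantitative deformation lemma, deform the two-parameter surface $(t,s)\mapsto tu^++su^-$, and use a Brouwer degree argument on the boundary of a small square around $(1,1)$ to locate a deformed point that still lies in $\MM_\ep\cap V$ but has strictly smaller energy. The only differences are cosmetic: the paper introduces the auxiliary map $\Psi_0$ and invokes homotopy invariance to pass from $\deg(\Psi_0,D,0)$ to $\deg(\Psi_1,D,0)$, whereas you read the nonzero degree directly from the boundary sign pattern of $\Psi(t,s)=(tg'_{u^+}(t),sg'_{u^-}(s))$ — and your explicit flag that one must shrink the parameters to guarantee $(w^*)^\pm\neq 0$ (so that the degree-theoretic zero really lands in $\MM_\ep$) is a point the paper handles only implicitly through its choice of $\delta<r_\ep/3$.
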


\begin{proof}
Arguing by contradiction, assume that $u$ is not a critical point in $W^{1,p}_0(\Omega)$ of $J_\varepsilon$ so that 
$$
2\eta_{\ep}:=\Vert DJ_{\ep}(u)\Vert_{(W_0^{1,p}(\Omega))^*}>0.
$$ 
	
Choose $r_{\ep}>0$ such that $B_{r_{\ep}}(u)\subset V$ and for every $v\in B_{r_{\ep}}(u)$,
$$
	\Vert DJ_{\ep}(v)\Vert_{(W_0^{1,p}(\Omega))^*}>\eta_{\ep} >0. 
	$$
	
Fix $a,b \in \R$ with $a<1<b$. Set $D:=(a,b)\times(a,b)\subset \R^2$ and assume further that $a,b$ are such that $\{tu^+ + su^- \,:\,(t,s)\in  \overline{D}\}\subset V$.

\medskip
Consider the function $h_{\ep}(t,s):=J_{\ep}(tu^+ + s u^-)$ for $(t,s) \in [0,\infty) \times [0,\infty)$, introduced in the proof of Lemma \ref{alpha=beta} and recall that for any $(t,s) \in \R^2-\{(1,1)\}$,
	\begin{equation}\label{inequality h}
	h_{\ep}(t,s)=J_{\ep}(tu^+) + J_{\ep}(su^-)< J_{\ep}(u^+) + J_{\ep}(u^-) =J_{\ep}(u) = h_{\ep}(1,1).
	\end{equation}

In particular  
	\begin{equation*}
	\max \limits_{(t,s)\in \partial D} h_{\ep}(t,s) < h_{\ep}(1,1).
	\end{equation*}
	
	\medskip
Let $\delta\in \left(0,\frac{r_\varepsilon}{3}\right)$ be fixed and small enough so that $B_{3\delta}(u) \subset V$ and
	\begin{equation}\label{delta deformation}
	 \{tu^+ + su^- \,:\,(t,s)\in \partial D\}\subset V\setminus B_{3\delta}(u).
	\end{equation}
		
Set $S:=\overline{B}_{\delta}(u)$, $c:=J_\varepsilon (u)$ and fix $\theta_\varepsilon\in \left(0,\frac{\delta \eta_\varepsilon}{8}\right)$. Using Lemma 2.3 in \cite{WILLEMBOOK} we find a continuous deformation $\Lambda_{\ep}:[0,1]\times W^{1,p}_0(\Omega) \to W^{1,p}_0(\Omega)$ such that
	\begin{itemize}
		\item[(a)] $\Lambda_{\ep}(1,v) =v$ for $v\notin J_{\ep}^{-1}\left([c-2\theta_{\ep},c+2\theta_{\ep}]\right)\cap B_{3\delta}(u)$,
		
		\item[(b)] $\Lambda_{\ep}(1, J_{\ep}^{c + \theta_{\ep}} \cap \overline{B}_{\delta}(u)) \subset J_{\ep}^{c - \theta_{\ep}}$,
		
		\item[(c)] $J_{\ep}(\Lambda_{\ep}(1,v)) \leq J_{\ep}(v)$ for every $v\in W^{1,p}_0(\Omega)$,
		\item[(d)] $\Lambda_{\ep}(t,\cdot)$ is an homeomorphism of $W_0^{1,p}(\Omega)$ for every $t\in [0,1]$.	
	\end{itemize}  

From {\rm (c)} and \eqref{inequality h} for every $(t,s)\in \overline{D}-\{(1,1)\}$,
$$
J_{\ep}(\Lambda_{\ep}(1, tu^+ + s u^-)) \leq J_{\ep}(tu^+ + s u^-) < J_{\ep}(u).
$$

On the other hand, item {\rm(c)} also implies that $J_{\ep}(\Lambda_{\ep}(1,u))\leq J_{\ep}(u)$ and since $u\in J_{\ep}^{c+ \theta_{\ep}}\cap B_{\delta}(u)$ then item {\rm (b)} yields
$$
J_{\ep}(\Lambda_{\ep}(1,u)) \leq c- \theta_{\ep} <c = J_{\ep}(u).
$$

Therefore,
\begin{equation}\label{contradiction1}
\max \limits_{(t,s)\in \overline{D}} J_{\ep}(\Lambda_{\ep}(1, tu^+ + s u^-)) < J_{\ep}(u).
\end{equation}

Consider the mapping $\red{\sigma}_{\ep}(t,s):= \Lambda_{\ep}(1, tu^+ + su^-)$ for $(t,s)\in \overline{D}$. Next, we claim that $\red{\sigma}_{\ep}(D) \cap \MM_{\ep}\cap V$ is non-empty. If this is the case, then 
$$
J_{\ep}(u) \leq \min \limits_{v\in \sigma_{\ep}(D) \cap \MM_{\ep}\cap V}J_{\ep}(v)
$$
and hence contradicting \eqref{contradiction1}.

	\medskip
To prove the claim we proceed as follows. Consider the functions
	$$
	\begin{aligned}
	\Psi_0(t,s):=& \left(DJ_{\ep}(tu^+)u^+, DJ_{\ep}(su^-)u^-\right),\\
	\Psi_1(t,s):=&\left(\frac{1}{t}DJ_{\ep}(\red{\sigma}_{\ep}^+(t,s))\red{\sigma}_{\ep}^+(t,s),\frac{1}{s}DJ_{\ep}(\red{\sigma}_{\ep}^-(t,s))\red{\sigma}_{\ep}^-(t,s)\right)
	\end{aligned}
	$$
	for $(t,s)\in \overline{D}$.
	
	\medskip
	Since $u\in \MM_{\ep}$, $(1,1)\in D$ is the unique point of maximum of $h_{\ep}$. We conclude that
	$$
	\deg(\Psi_0,D,0)=1.
	$$
	
	From \eqref{delta deformation} and item {\rm (a)}, $\red{\sigma}_{\ep}(t,s)= tu^+ + su^-$ for $(t,s)\in \partial D$, so that $\Psi_0=\Psi_1$ on $\partial D$ and consequently,
	$$
	{\deg}(\Psi_1,D,0)={\rm deg}(\Psi_0,D,0)=1.
	$$ 
	
Therefore, for some $(t_1,s_1) \in D$, $\Psi_1(t_1,s_1)=(0,0)$, i.e. $\red{\sigma}_{\ep}(t_1,s_1)\in \MM_{\ep}$. Finally, items {\rm(a)} and {\rm (d)} imply that $\Lambda_{\ep}(1,\cdot)$ is a homeomorphism with $\Lambda_{\ep}(1, \cdot)|_{W^{1,p}_0(\Omega)-B_{3\delta}(u)}= Id|_{W^{1,p}_0(\Omega)-B_{3\delta}(u)}$. Consequently, $\Lambda_{\ep}(1,V) \subset V$ and since $(t_1,s_1)\in D$, then $\Lambda_{\ep}(1,t_1u^+ + s_1 u^-) \in V$, i.e $\sigma_{\ep}(t_1,s_1)\in V$. This concludes the proof of the claim and also the proof of the lemma. $\blacksquare$
\end{proof}

\begin{lemma}\label{Nehari compactness}
Let $f$ satisfy (f1)-(f4) and let $\ep \in \R$.  Assume that $B\subset W^{1,p}_0(\Omega)$ is either an open ball or $B=W^{1,p}_0(\Omega)$. Let $\{v_n\}_{n\in \N}\subset \MM_{\ep}\cap B$ and $v\in W^{1,p}_0(\Omega)$ be such that $v_{n}\rightharpoonup v$ weakly in $W_0^{1,p}(\Omega)$ and let $\tau_{\varepsilon,v^+},\tau_{\varepsilon,v^-}>0$ be such that $\tau_{\varepsilon,v^+}v^+ + \tau_{\varepsilon,v^-}v^- \in \MM_{\ep}\cap B$. Then the following holds true. 
\begin{itemize}
\item[(i)] $v^+\neq 0$ and $v^-\neq 0$. 
\item[(ii)] If $\{v_n\}_{n\in \N}$ is a minimizing sequence for $J_\varepsilon|_{\MM_\varepsilon\cap B}$, i.e. $J_{\ep}(v_n) \to \min \limits_{w\in \MM_{\ep}\cap B}J_{\ep}(w)$, then $v_{n} \to v$ strongly in $W^{1,p}_0(\Omega)$, $v\in \MM_{\ep}\cap B$ and $J_{\ep}(v)= \min \limits_{w\in \MM_{\ep}\cap B} J_{\ep}(w)$.
\end{itemize}
In particular, if $J_{\ep}(v_n) \to \min \limits_{w\in \MM_{\ep}}J_{\ep}(w)$, then $v\in \MM_{\ep}$ and  $J_{\ep}(v)= \min \limits_{w\in \MM_{\ep}
} J_{\ep}(w)$.
\end{lemma}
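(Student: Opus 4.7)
My strategy is to dispatch (i) by a short compactness argument and to treat (ii) by sandwiching $J_\varepsilon(\phi)$, where $\phi := \tau_{\varepsilon,v^+}v^+ + \tau_{\varepsilon,v^-}v^-$, between inequalities that pin down the infimum $\alpha := \min_{w\in \MM_\varepsilon \cap B} J_\varepsilon(w)$ and force strong convergence as a by-product.

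For (i), since each $v_n^\pm \in \NN_\varepsilon$, Lemma \ref{QualitLemma}(ii) gives $\|v_n^\pm\|_{L^q(\Omega)}\geq \rho_0$. The weak convergence $v_n \rightharpoonup v$ in $W^{1,p}_0(\Omega)$ together with Rellich--Kondrachov (using $q<p^*$) yields $v_n \to v$ strongly in $L^q(\Omega)$, and continuity of $w\mapsto w^\pm$ in $L^q(\Omega)$ gives $v_n^\pm \to v^\pm$ in $L^q(\Omega)$. Passing to the limit, $\|v^\pm\|_{L^q(\Omega)}\geq \rho_0>0$, so $v^\pm \neq 0$.

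For (ii), set $\phi_n := \tau_{\varepsilon,v^+}v_n^+ + \tau_{\varepsilon,v^-}v_n^-$. Since $v_n \in \MM_\varepsilon$, Lemma \ref{Mnonempty} gives $\tau_{\varepsilon,v_n^\pm}=1$, so the decomposition $J_\varepsilon(\phi_n) = J_\varepsilon(\tau_{\varepsilon,v^+}v_n^+) + J_\varepsilon(\tau_{\varepsilon,v^-}v_n^-) \leq J_\varepsilon(v_n^+) + J_\varepsilon(v_n^-) = J_\varepsilon(v_n)$ holds. The weak convergence $\phi_n \rightharpoonup \phi$, weak lower semicontinuity of the convex gradient terms, and Rellich convergence $\int_\Omega F(\phi_n)\,dx \to \int_\Omega F(\phi)\,dx$ (using the growth from (f1)) yield $J_\varepsilon(\phi) \leq \liminf J_\varepsilon(\phi_n)$. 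Since $\phi \in \MM_\varepsilon \cap B$ by hypothesis, $J_\varepsilon(\phi) \geq \alpha$, while $J_\varepsilon(v_n)\to\alpha$; combining the three inequalities forces each of them to be an equality, and in particular $J_\varepsilon(\phi_n) \to J_\varepsilon(\phi) = \alpha$.

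The decisive step is upgrading to strong convergence. Combining $J_\varepsilon(\phi_n) \to J_\varepsilon(\phi)$ with the Rellich convergence of the $F$-term shows that $\tfrac{\varepsilon^2}{2}\|\nabla\phi_n\|_{L^2}^2 + \tfrac{1}{p}\|\nabla\phi_n\|_{L^p}^p$ converges to its analogue for $\phi$. Each summand is individually weakly lower semicontinuous, so neither may admit slack in $\liminf$; this forces $\|\nabla\phi_n\|_{L^p} \to \|\nabla\phi\|_{L^p}$. Together with weak convergence and uniform convexity of $L^p$, this yields $\phi_n \to \phi$ strongly in $W^{1,p}_0(\Omega)$. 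Since $\tau_{\varepsilon,v^\pm}>0$ and $w\mapsto w^\pm$ is continuous on $W^{1,p}_0(\Omega)$, we deduce $v_n^\pm \to v^\pm$, hence $v_n \to v$ strongly. Closedness of $\NN_\varepsilon$ (Lemma \ref{QualitLemma}(iii)) gives $v^\pm \in \NN_\varepsilon$, and uniqueness in Lemma \ref{Mnonempty} forces $\tau_{\varepsilon,v^\pm}=1$; hence $v=\phi \in \MM_\varepsilon \cap B$ and $J_\varepsilon(v)=\alpha$ by continuity. The rigidity step---extracting separate norm convergence from convergence of a sum of two weakly lower semicontinuous terms, and then invoking uniform convexity of $L^p$---is the main obstacle; everything else transfers through the positive constants $\tau_{\varepsilon,v^\pm}$.
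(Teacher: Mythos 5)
Your proof is correct and relies on the same machinery as the paper's: weak lower semicontinuity, Vainberg/Rellich compactness, the projection scalars of Lemma~\ref{Mnonempty}, and uniform convexity of $W_0^{1,p}(\Omega)$. The differences are cosmetic rather than structural. For (i), you pass to the limit in the uniform lower bound $\|v_n^\pm\|_{L^q}\geq\rho_0$ from Lemma~\ref{QualitLemma}(ii), whereas the paper passes to the limit in the Nehari identity $\int f(v_n^\pm)v_n^\pm\geq\rho^p$; both deliver $v^\pm\neq 0$. For (ii), the paper argues by contradiction, assuming a strict drop in one of the two norms and deriving $\alpha<\alpha$ via the same sandwich; you instead run the sandwich directly to force $J_\varepsilon(\phi_n)\to J_\varepsilon(\phi)$ and then peel off separate norm convergence for the $L^2$- and $L^p$-gradient terms (both weakly lower semicontinuous, so neither can retain slack once their sum converges), which is a slightly cleaner way to reach norm convergence and avoids the paper's need to pre-extract convergent subsequences of norms. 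One small point to make explicit in your write-up: the deduction $v_n^\pm\to v^\pm$ from $\phi_n\to\phi$ requires knowing that the truncation $w\mapsto w^\pm$ is \emph{strongly} continuous on $W_0^{1,p}(\Omega)$, which is true (dominated convergence on $\nabla w$, using that $\nabla\phi=0$ a.e.\ on $\{\phi=0\}$) but not entirely trivial; the paper sidesteps this by obtaining $v_n^\pm\to v^\pm$ directly from norm convergence of $v_n^\pm$ and uniform convexity.
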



\begin{proof}
We proceed as in section 3 of \cite{CASTROCOSSIONEUBERGER1997}. First, notice that 
\begin{equation}\label{subsequence1}
v^+_n \rightharpoonup v^+, \quad v_n^- \rightharpoonup v^- \quad \hbox{weakly in}\quad W^{1,p}_0(\Omega)   
\end{equation}
and
\begin{equation}\label{subsequence3}
\|v^{+} \| _{W^{1,p}_0(\Omega)} \leq \lim\limits_{n\rightarrow \infty} \|v^{+}_n \| _{W^{1,p}_0(\Omega)} \ \text{ and } \ \|v^{-} \| _{W^{1,p}_0(\Omega)} \leq \lim\limits_{n\rightarrow \infty} \|v^{-}_n \| _{W^{1,p}_0(\Omega)} .
\end{equation}

The compactness of the Sobolev embeddings and \eqref{subsequence1} imply that, up to a subsequence,
\begin{equation*}
v^+_n \to v^+, \quad v_n^- \to v^- \quad \hbox{strongly in}\quad L^r(\Omega)   
\end{equation*}
for any $r\in [1,p^*)$. Also, $v^+ \geq 0$ and $v^- \leq 0$ a.e. in $\Omega$. By taking further subsequences, if necessary, we may also assume that the sequences $\{ \|v^{+}_n \| \} _n$ and $\{ \|v^{-}_n \| \} _n$ converge in $\Bbb R$.

\medskip
Vainberg's Lemma (see \cite{VAINBERG64}) yields
\begin{equation*}
\int_{\Omega}F(v^{\pm}_n)\to \int_{\Omega} F(v^{\pm}),
\end{equation*}
\begin{equation*}
\int_{\Omega}f(v_n^\pm)v_n^\pm\to \int_{\Omega} f(v^\pm)v^\pm,
\end{equation*}
as $n\to \infty$. Since $v_n\in \MM_{\ep}$, 
\begin{equation*}
DJ_{\ep}(v_n^{\pm})v^{\pm}_{n} =0 \quad \hbox{for every} \quad n\in \mathbb{N}
\end{equation*}
and using Lemma \ref{QualitLemma} we find that
\begin{equation*}
\begin{aligned}
\int_{\Omega}f(v^{\pm})v^{\pm} dx=&\lim \limits_{n\to \infty}\int_{\Omega}f(v_n^{\pm})v_n^{\pm}dx \\ 
\geq& \lim \limits_{n\to \infty}\int_{\Omega}|\nabla v_n^{\pm}|^p dx\\\geq&  \rho^p,
\end{aligned}
\end{equation*}
so that $v^{\pm} \neq 0$ in $W^{1,p}_0(\Omega)$. This proves {\it (i)}.\\

Next, we prove {\it (ii)}. First we prove that $v_n \to v$ strongly in $W^{1,p}_0(\Omega)$. We proceed by showing that in \eqref{subsequence3} both equalities hold.

\medskip
Let us argue by contradiction. Assume either 
\begin{equation}\label{subsequence4}
\|v^{+} \| _{W^{1,p}_0(\Omega)} < \lim\limits_{n\rightarrow \infty} \|v^{+}_n \| _{W^{1,p}_0(\Omega)} \ \text{ or } \ \|v^{-} \| _{W^{1,p}_0(\Omega)} < \lim\limits_{n\rightarrow \infty} \|v^{-}_n \| _{W^{1,p}_0(\Omega)} .
\end{equation}
Set $a:=\tau_{\varepsilon,v^+}$ and $b:=\tau_{\varepsilon,v^-}$. Since $av^+ + bv^- \in \MM_{\ep}\cap B$,
$$
\begin{aligned}
\inf \limits_{w\in \MM_{\ep}\cap B}J_{\ep}(w) 
\leq & J_{\ep}(av^+ + bv^-)=J_{\ep}(av^+ ) + J_{\ep} (bv^-)\\
< & \lim_{n\to \infty} J_{\ep}( a v_n^+)+\lim \limits_{n\to \infty}J_{\ep}( bv_n^-) \ \ \text{ (from }  \eqref{subsequence4}) \\
\leq & \lim_{n\to \infty} J_{\ep}(  v_n^+)+\lim \limits_{n\to \infty}J_{\ep}( v_n^-)  \ \ \text{ (since } v_n^+ , v_n^- \in \NN_{\ep}) \\
=& \lim_{n\to \infty} J_{\ep}(  v_n^+ + v_n^-) =\inf \limits_{w\in \MM_{\ep}\cap B}J_{\ep}(w),
\end{aligned}
$$
which is a contradiction.  Thus, the equalities hold in \eqref{subsequence3}. Since $W_0^{1,p}(\Omega)$ is uniformly convex (see Theorem 2.6 in \cite{AdamsFournier}) the convergence of $\{v_n\}_{n\in \N}$ to $v$ in $W^{1,p}_0(\Omega)$ is strong. 

\medskip
Since $v_n \to v$ strongly in $W^{1,p}_0(\Omega)$ and $J_{\ep}$ is a $C^1-$ function, we conclude that $v^+,v^-\in \NN_{\ep}$, i.e. $v\in \MM_{\ep}$ or equivalently $t_{\ep,v}=s_{\ep,v}=1$. From our assumptions, this in turn implies that $v\in B$.


\medskip
Finally, If $\{v_n\}_{n\in \N}$ is a minimizing sequence for $J_\varepsilon|_{\MM_\varepsilon\cap B}$, then $J_{\ep}(v)=\min \limits_{w\in \MM_{\ep}\cap B}J_{\ep}(v)$. This completes the proof of the lemma. $\blacksquare$
\end{proof}   

\begin{proof}{\it Proof of Theorem \ref{Theorem1}.} 
Let $\{v_n\}_{n\in \N} \subset \MM_\varepsilon$  be a minimizing sequence, i.e. $J_\varepsilon(v_n)\to \alpha_\varepsilon$. From Lemma \ref{QualitLemma}, $J_\varepsilon|_{\NN_{\ep}}$ is coercive. Consequently, $\{v_n\}_{n\in \N}$ is bounded. From the reflexi\-vity of $W^{1,p}_0(\Omega)$, there exists $u\in W_0^{1,p}(\Omega)$ such that, up to a subsequence $v_n \rightharpoonup u$ weakly in $W_0^{1,p}(\Omega)$. A direct application of Lemma \ref{Nehari compactness} implies that  $u\in \MM_{\ep}$ solves \eqref{leastnodalenergyslncondition}. Finally,  Lemma \ref{Deformation} implies that $u$ is a least energy nodal solution of \eqref{eq mean problem} and Lemma \ref{alpha=beta} yields \eqref{nodalcharacteerization}. This completes the proof of the theorem. $\blacksquare$
\end{proof} 

\medskip

We conclude this section providing a result that relates the number of nodal regions of a solution with its Morse index.
\begin{lemma}\label{nodal regions}
Let $f$ satisfy (f1) and (f4) and let $\ep \in \R$. Assume that $u\in W_0^{1,p}(\Omega)$ is a weak solution of \eqref{eq mean problem} with $N_{\ep}(u)$ nodal regions. Then $N_{\ep}(u)\leq m_\varepsilon(u)$.
\end{lemma}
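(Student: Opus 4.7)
The plan is to build an $N_\varepsilon(u)$-dimensional subspace of $W^{1,p}_0(\Omega)$ on which the quadratic form $D^2J_\varepsilon(u)$ is negative definite. To this end, let $\Omega_1,\dots,\Omega_{N_\varepsilon(u)}$ denote the nodal regions of $u$ and define
\[
u_i(x):=u(x)\,\ind_{\Omega_i}(x),\qquad i=1,\dots,N_\varepsilon(u).
\]
Since each $\Omega_i$ is a connected component of $\{u\neq 0\}$ and $u\in W^{1,p}_0(\Omega)$, each $u_i$ belongs to $W^{1,p}_0(\Omega)$, and the supports $\overline{\Omega}_i$ being pairwise disjoint up to null sets immediately yields linear independence as well as $\nabla u_i\cdot \nabla u_j=0$ and $u_i u_j=0$ a.e.\ for $i\neq j$. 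Consequently, the mixed terms in \eqref{2ndDerivJep} vanish and it suffices to prove that $D^2J_\varepsilon(u)(u_i,u_i)<0$ for every $i$, because then for any nonzero $\varphi=\sum c_i u_i$ in the span,
\[
D^2J_\varepsilon(u)(\varphi,\varphi)=\sum_i c_i^2\, D^2J_\varepsilon(u)(u_i,u_i)<0.
\]

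For the diagonal entries I would use that $u_i$ coincides with $u$ on $\Omega_i$ and vanishes elsewhere, so $\nabla u_i=\nabla u\,\ind_{\Omega_i}$. Plugging $\varphi=\psi=u_i$ into \eqref{2ndDerivJep} and collecting terms yields
\[
D^2J_\varepsilon(u)(u_i,u_i)=\int_{\Omega_i}\!\left(\varepsilon^2|\nabla u|^2+(p-1)|\nabla u|^p\right)dx-\int_{\Omega_i} f'(u)u^2\,dx.
\]
Next, since $u$ solves \eqref{eq weak mean problem} and $u_i\in W^{1,p}_0(\Omega)$ is an admissible test function, choosing $\varphi=u_i$ in \eqref{eq weak mean problem} gives
\[
\int_{\Omega_i}\!\left(\varepsilon^2|\nabla u|^2+|\nabla u|^p\right)dx=\int_{\Omega_i} f(u)u\,dx,
\]
which I substitute to eliminate $\int_{\Omega_i}|\nabla u|^p\,dx$. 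The result is
\[
D^2J_\varepsilon(u)(u_i,u_i)=-(p-2)\varepsilon^2\!\int_{\Omega_i}|\nabla u|^2\,dx+\int_{\Omega_i}\!\left[(p-1)\frac{f(u)}{u}-f'(u)\right]u^2\,dx.
\]

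Because $p>2$, the first term is nonpositive. For the second term, on each nodal region $\Omega_i$ one has $u\neq 0$ a.e., and hypothesis (f4) forces the bracket to be strictly negative a.e.\ on $\Omega_i$, so the integral is strictly negative. Hence $D^2J_\varepsilon(u)(u_i,u_i)<0$, and together with the orthogonality $D^2J_\varepsilon(u)(u_i,u_j)=0$ for $i\neq j$ this delivers the desired $N_\varepsilon(u)$-dimensional subspace of negativity, proving $N_\varepsilon(u)\leq m_\varepsilon(u)$. The only non-routine point is justifying that $u\,\ind_{\Omega_i}\in W^{1,p}_0(\Omega)$; I would invoke the fact that nodal domains are open sets together with the chain rule for Sobolev functions applied to suitable truncations, which is standard once one notes that $u\ind_{\{u>0\}}=u^+$ and $u\ind_{\{u<0\}}=u^-$ are in $W^{1,p}_0(\Omega)$ and that the argument localizes to connected components.
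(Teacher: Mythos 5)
Your proposal is correct and follows essentially the same approach as the paper: decompose $u$ over its nodal regions via $u_i:=u\,\ind_{\Omega_i}$, observe that the cross terms of $D^2J_\varepsilon(u)$ vanish, and then use the weak equation tested against $u_i$ together with (f4) to make each diagonal entry strictly negative (the paper has a small typo writing $(p-2)\frac{f(v)}{v}$ where it should read $(p-1)\frac{f(v)}{v}$, as in your computation). The one point you rightly single out as non-routine — that $u\,\ind_{\Omega_i}\in W^{1,p}_0(\Omega)$ with $\nabla(u\ind_{\Omega_i})=\ind_{\Omega_i}\nabla u$ a.e. — is where your sketch is thinner than the paper's: invoking $u^\pm\in W^{1,p}_0(\Omega)$ and saying the argument ``localizes to connected components'' does not by itself establish that the restriction to a \emph{single} connected component of $\{u>0\}$ remains in $W^{1,p}_0(\Omega)$, since the indicator of such a component is not a composition with a Lipschitz function of $u$ and the usual chain rule does not directly apply. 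The paper closes this gap by citing Lemma~1 of M\"uller-Pfeiffer \cite{MULLERPFEIFFER1985}, and your write-up should do the same (or reproduce that argument) rather than leave it as a remark.
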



\begin{proof}
Let $C$ be a nodal region of $u$ and define $v:=\mathds{1}_{C}u$. Lemma 1 in \cite{MULLERPFEIFFER1985} yields that $v\in W_0^{1,p}(\Omega)$ and
$$
\nabla v(x)=\left\{
\begin{array}{ccc}
0,&\quad \hbox{a.e.} &  x\in \Omega \setminus C\\
\nabla u(x), &\quad  \hbox{a.e.} & x\in C.  
\end{array}
\right.
$$

Thus, if $D$ is another nodal region of $u$ and $w:=\mathds{1}_{D}u$, then from \eqref{2ndDerivJep} we have that
	\begin{equation*}
	D^2J_\varepsilon(u)(v,w)=0.
	\end{equation*}

We conclude that given any $t,s \in \R$,
	\begin{equation*}
	D^2J_\varepsilon(u)(tv+sw,tv+sw)=t^2D^2J_\varepsilon(u)(v,v)+s^2D^2J_\varepsilon(u)(w,w).
	\end{equation*}

Therefore, to prove the lemma, it suffices to show that $D^2J_\varepsilon(u)$ is negative definite along any direction $v$ of the form $\mathds{1}_{C}u$ with $C$ an arbitrary nodal region of $u$.

\medskip
Notice that since $u$ is critical point, using the definition of $v$
	$$DJ_\varepsilon(u)(v)=\int_{\Omega} \left(\ep^2 + |\nabla v|^{p-2}\right)\nabla v\cdot \nabla v dx - \int_{\Omega} f(v)v dx=0.$$
	Using the latter equation and (f4) we obtain (see also the proof of Lemma \ref{Mnonempty})
	\begin{align*}
	D^2J_\varepsilon(u)(v,v)&= \int_{\Omega} \left(\ep^2 + |\nabla v|^{p-2}\right)\nabla v\cdot \nabla v dx 
	+ \int_{\Omega}(p-2)|\nabla v|^{p} dx - \int_{\Omega} f'(v)v^2 dx\\
	&=(2-p)\int_{\Omega} \ep^2|\nabla v|^2
	+ \int_{\{v\neq 0\}}\left((p-2)\frac{f(v)}{v} - f'(v)\right)v^2 dx<0.
	\end{align*}
	Proceeding inductively on the number of nodal regions the result follows.   $\blacksquare$
\end{proof}
%
%
%
%


\section{Regularity}
In this part we obtain regularity results for solutions of \eqref{eq mean problem}. This results will play a crucial role when computing the Morse index of least energy nodal solutions in Section \ref{computMorseIndexNondeg}. The first result states the uniform boundedness of $u$. 

\begin{lemma}\label{lemmaregul1}
Let $\ep \in \R$ and $f$ satisfy hypothesis (f1). Any weak solution $u\in W^{1,p}_0(\Omega)$ of \eqref{eq mean problem}, in the sense of the Definition \ref{solution}, belongs to $L^{\infty}(\Omega)$.
\end{lemma}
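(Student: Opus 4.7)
The plan is to run a standard Moser iteration that exploits the subcritical growth in (f1) and uses only the $p$-Laplacian part of the operator (dropping the $\varepsilon^2$ contribution, which is nonnegative for our test functions). First, integrating (f1) yields $|f(t)|\leq A'(1+|t|^{q-1})$ for some $A'>0$ and the exponent $q\in (p,p^*)$ from (f1). For parameters $\beta>0$ and $M>0$, I set $u_M:=\operatorname{sgn}(u)\min\{|u|,M\}$ and use the admissible test function $\varphi:=u\,|u_M|^\beta\in W_0^{1,p}(\Omega)\cap L^\infty(\Omega)$, so that
$$\nabla\varphi = |u_M|^\beta\,\nabla u + \beta\, u\,|u_M|^{\beta-1}\operatorname{sgn}(u_M)\,\chi_{\{|u|\leq M\}}\,\nabla u.$$
Both resulting contributions on the left-hand side of \eqref{eq weak mean problem} are nonnegative, so I discard the $\varepsilon^2$ term and retain only the $p$-Laplacian piece, which after the usual chain-rule manipulation gives a lower bound by a constant multiple of $\int_\Omega |\nabla(u|u_M|^{\beta/p})|^p\,dx$.

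Next, I would apply the Sobolev embedding $W_0^{1,p}(\Omega)\hookrightarrow L^{p^*}(\Omega)$ to the left-hand side and send $M\to\infty$ by monotone convergence, obtaining the recursion
$$\|u\|_{L^{p^*(\beta+p)/p}(\Omega)}^{\beta+p}\;\leq\; C\,(\beta+1)^{p}\Bigl(\|u\|_{L^{\beta+1}(\Omega)}^{\beta+1}+\|u\|_{L^{\beta+q}(\Omega)}^{\beta+q}\Bigr).$$
Because $q<p^*$, one has $p^*(\beta+p)/p-(\beta+q)=\beta(p^*-p)/p+p^*-q>0$ for every $\beta\geq 0$, so each step produces a geometric gain by a factor bounded below by $p^*/p>1$. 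Starting from $u\in L^{p^*}(\Omega)$, which holds by Sobolev since $u\in W_0^{1,p}(\Omega)$, and iterating $\beta_k$ via $\beta_{k+1}+q = p^*(\beta_k+p)/p$, one concludes that $u\in L^r(\Omega)$ for every $r\in [1,\infty)$.

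A final Moser-type step then promotes this to an $L^\infty$-bound: taking $\beta_k\to\infty$ geometrically, the product $\prod_k \bigl(C(\beta_k+1)^p\bigr)^{1/\beta_k}$ is convergent, which yields $\|u\|_{L^\infty(\Omega)}\leq \limsup_k \|u\|_{L^{r_k}(\Omega)}<\infty$. The main obstacle is the bookkeeping of constants in the iteration (ensuring the $\beta$-dependent factors do not blow up) and justifying the truncation/limit passage $M\to\infty$ at each step, both of which are standard but tedious. One may alternatively cite the classical Serrin/Ladyzhenskaya–Ural'tseva regularity theory for quasilinear elliptic equations with subcritical sources, which applies directly to $-\varepsilon^2\Delta-\Delta_p$ and the nonlinearity $f(u)$ governed by (f1).
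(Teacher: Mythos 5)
Your Moser-iteration argument is a correct and standard alternative to the paper's proof, which instead uses a De Giorgi-type truncation: the paper tests \eqref{eq weak mean problem} against the ``shifted'' truncation $\chi_j(u)=\operatorname{sgn}(u)(|u|-j)^+$, derives the single inequality \eqref{THEO5.1URALTSEVA} relating $\int_{\{|u|>j\}}|\nabla u|^p$ to integrals of $(|u|-j)^l$ over the level sets, and then invokes Theorem 5.1 of Ladyzhenskaya--Ural'tseva to conclude $u\in L^\infty$. Your route instead tests against truncated power weights $u|u_M|^\beta$, uses Sobolev, and runs the bootstrap by hand. Both approaches rest on the same structural facts --- the $\varepsilon^2$ term has the right sign and can be discarded, and (f1) gives subcritical growth of $f$ --- so neither buys more generality; the paper's version is shorter because it outsources the iteration to a classical theorem, whereas yours is self-contained but requires the constant-tracking you flag. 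Two small points to tighten: (a) the case $p\geq N$ (where $p^*=\infty$ and your ``start from $u\in L^{p^*}$'' step is vacuous) should be dispatched first, as the paper does, via the Sobolev/Morrey embedding for $p>N$ and a one-line variant for $p=N$; (b) in the final promotion to $L^\infty$ the two right-hand terms $\|u\|_{L^{\beta+1}}^{\beta+1}$ and $\|u\|_{L^{\beta+q}}^{\beta+q}$ carry different exponents, so the clean geometric recursion $\|u\|_{r_{k+1}}\leq C_k^{1/r_k}\|u\|_{r_k}$ needs a normalization (or a final appeal to quasilinear $L^\infty$ estimates with right-hand side in $L^s$, $s>N/p$, once you know $u\in L^r$ for all $r<\infty$) --- your alternative of simply citing Serrin/Ladyzhenskaya--Ural'tseva is in fact exactly what the paper does.
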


\begin{proof}
The reader is referred to Lemma 3.1 in  \cite{CingolaniVannella2003}, but for the sake of completeness and clarity we present a brief sketch of it. 

\medskip
Let $u \in W^{1,p}_0(\Omega)$ satisfy the integral identity \eqref{eq weak mean problem}. If $p>N$, the Sobolev embedding $W^{1,p}_0(\Omega) \hookrightarrow C^{0,1-\frac{N}{p}}(\overline{\Omega})$ yields the conclusion. 

\medskip
Assume now that $p\in (1,N]$ and that $q \in (p,p^*)$. Let $j\in \mathbb{N}$ be arbitrary, but fixed and consider the function
$$
\chi_j(\zeta):= \left\{
\begin{aligned}
\zeta + j \quad &\hbox{for} \quad \zeta < -j,\\
0 \quad &\hbox{for} \quad |\zeta| \leq j,\\
\zeta - j \quad &\hbox{for} \quad \zeta > j.
\end{aligned}
\right.
$$

Since $\chi_j \in W^{1,\infty}_{loc}(\overline{\R})$, $\chi'_j \in L^{\infty}(\R)$ and $\chi_j(0)=0$,
$$
|\chi_j(u)| \leq |u| 
\qquad \hbox{and} 
\qquad \nabla \chi_j(u):= \mathds{1}_{\Omega_j}\nabla u \quad \hbox{a.e. in} \quad \Omega,
$$ 
where $\Omega_j:=\{|u|>j\}$ and $\mathds{1}_{\Omega_j}$ is the characteristic function of $\Omega_j$.

\medskip 
Proceeding in the same fashion as in the proof of the Proposition 9.5 in \cite{BREZIS2011}, we conclude that $\chi_j(u) \in W^{1,p}_0(\Omega)$ and using \eqref{eq weak mean problem} with $\varphi =\chi_j(u)$, 
$$
\begin{aligned}
\int_{\Omega_j}|\nabla u|^pdx \leq & \int_{\Omega}\left(\ep^2 + |\nabla u|^{p-2}\right)\nabla u \cdot \nabla \chi_j(u)dx \\
=& \int_{\Omega}f(u)\chi_j(u)dx.
\end{aligned}
$$

Hypothesis (f1) yields the existence of $C>0$, depending only on $q$, such that  
$$
\int_{\Omega_j}|\nabla u|^pdx \leq 
 C\int_{\Omega_j} ( |u|^2 + |u|^{q} )dx
$$
and since $j\geq 1$ and $2<p<q<p^*$, we can select $C>0$ larger if necessary, but yet independent of $j$, so that 
$$
\begin{aligned}
\int_{\Omega_j}|\nabla u|^pdx \leq & C\int_{\Omega_j} |u|^{q}dx\\
\leq & 2^{q-1}C\left(\int_{\Omega_j} \left(|u| - j\right)^{q} dx + {\rm meas}(\Omega_j)j^q\right).
\end{aligned}
$$

Next, let $r\in (\frac{N}{p},\infty)$ such that $l:=\frac{rq}{r-1}\in (q,p^*)$. Using H\"{o}lder 's inequality and the fact $u\in L^l(\Omega)$, we estimate 
$$
\begin{aligned}
\int_{\Omega_j} \left(|u|-j\right)^q dx \leq & {\rm meas}(\Omega)^\frac{q}{r-1}\left(\int_{\Omega_j} \left(|u| - j\right)^l dx \right)^{\frac{q}{l}}\\
=& {\rm meas}(\Omega)^{\frac{q}{r-1}}\|u\|^{q-p}_{L^l(\Omega)} \left(\int_{\Omega_j}\left(|u| - j\right)^l dx \right)^{\frac{p}{l}}.   
\end{aligned}
$$

Therefore, 
\begin{equation}\label{THEO5.1URALTSEVA}
\int_{\Omega_j} |\nabla u|^pdx \leq C_1\left(\int_{\Omega_j}\left(|u| - j\right)^l dx \right)^{\frac{p}{l}} + {\rm meas}(\Omega_j)j^q,
\end{equation}
where $C_1:=2^{q-1} C
{\rm meas}(\Omega)^{\frac{q}{r-1}}\|u\|^{q-p}_{L^l(\Omega)}>0$. 

\medskip
Since \eqref{THEO5.1URALTSEVA} holds for any $j \in \mathbb{N}$, Theorem 5.1 from Chapter 2 and the footnote in page 71 both from \cite{LADYZHENSKAYAURALTSEVA} imply that $u\in L^{\infty}(\Omega)$. This concludes the proof of the lemma.   $ \blacksquare$
\end{proof}

\bigskip
For any $\ep\in \R$ arbitrary, consider the function $A_{\ep}:\R^N \to \R^N$ defined by 
$$
A_{\ep}(z):= \left[\ep^2+|z|^{p-2}\right]z \quad  \hbox{for every} \quad z\in \R^N.
$$

Observe that $A_{\ep}(0)=0$. Also $A_{0}\in C^0(\R^N)\cap C^{\infty}(\R^n-\{0\})$ and  $A_{\ep}\in C^{1}(\R^N)$.

\medskip
A direct computation yields that 
\begin{equation}\label{derivoperatorelliptic}
DA_{\ep}(z)=
\left\{
\begin{aligned}
 \left[\ep^2+|z|^{p-2}\right]I_{N\times N} + (p-2)|z|^{p-4}z\otimes z,& \quad \hbox{if} \quad z\neq 0\\
 \varepsilon^2 I_{N\times N},& \quad \hbox{if} \quad z=0,
\end{aligned}
\right.
\end{equation}
where $I_{N\times N}$ is the identity matrix and the symbol $\otimes$ represents the tensor product between two vectors of $\R^N$.

\medskip
\begin{lemma}\label{ellipticityAmu}
For any $\ep\in \R$ and any $z\in\R^N$,  the symmetric matrix $DA_{\ep}(z)$ has no negative eigenvalues and if $\varepsilon \neq 0$ they are strictly positive.
\end{lemma}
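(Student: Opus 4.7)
The plan is to diagonalize the symmetric matrix $DA_{\ep}(z)$ explicitly in the orthogonal decomposition $\R^N = \mathrm{span}(z) \oplus z^{\perp}$, and then read off its spectrum directly.

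First, I would dispose of the degenerate case $z=0$. Here formula \eqref{derivoperatorelliptic} gives $DA_{\ep}(0)=\varepsilon^{2}I_{N\times N}$, whose only eigenvalue is $\varepsilon^{2}\geq 0$, strictly positive when $\varepsilon\neq 0$. Likewise the case $\varepsilon\neq 0$, $z=0$ is immediately covered.

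For the main case $z\neq 0$, I would observe that the matrix $DA_{\ep}(z) = \bigl[\ep^{2}+|z|^{p-2}\bigr]I_{N\times N}+(p-2)|z|^{p-4}\,z\otimes z$ is a rank-one perturbation of a multiple of the identity, so that the orthogonal splitting $\R^N=\mathrm{span}(z)\oplus z^{\perp}$ is invariant. Then:
\begin{itemize}
\item On $\mathrm{span}(z)$, applying $DA_\ep(z)$ to $z$ gives $\bigl[\ep^{2}+|z|^{p-2}\bigr]z+(p-2)|z|^{p-4}(z\cdot z)z=\bigl[\ep^{2}+(p-1)|z|^{p-2}\bigr]z$, so the eigenvalue is $\ep^{2}+(p-1)|z|^{p-2}$.
\item On $z^{\perp}$, for any $w\perp z$ we have $(z\otimes z)w=(z\cdot w)z=0$, so $DA_\ep(z)w=\bigl[\ep^{2}+|z|^{p-2}\bigr]w$, giving the eigenvalue $\ep^{2}+|z|^{p-2}$ with multiplicity $N-1$.
\end{itemize}
Since $p>2$, both eigenvalues are nonnegative, and a direct inspection shows that both are strictly positive as soon as $\ep\neq 0$ (and in fact also whenever $z\neq 0$, regardless of $\ep$). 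This gives the two conclusions of the lemma.

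There is no real obstacle here; the only thing to be careful about is matching the $z=0$ and $z\neq 0$ formulas in \eqref{derivoperatorelliptic} and noting that the rank-one tensor $z\otimes z$ is positive semidefinite, which together with the factor $(p-2)>0$ ensures that the perturbation does not decrease the spectrum of $\bigl[\ep^{2}+|z|^{p-2}\bigr]I_{N\times N}$.
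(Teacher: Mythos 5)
Your proof is correct. The paper takes a slightly different route to the same spectrum: rather than exhibiting the invariant decomposition $\R^N=\mathrm{span}(z)\oplus z^{\perp}$ and reading off eigenvectors directly, it computes the characteristic polynomial $\det\bigl[DA_{\ep}(z)-\lambda I_{N\times N}\bigr]$ via the matrix determinant formula (for a rank-one update of a scalar matrix), obtaining $\bigl[\ep^2+|z|^{p-2}-\lambda\bigr]^{N-1}\bigl(\ep^2+(p-1)|z|^{p-2}-\lambda\bigr)$ and hence the same two eigenvalues $\ep^2+|z|^{p-2}$ (multiplicity $N-1$) and $\ep^2+(p-1)|z|^{p-2}$ (multiplicity $1$). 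Your direct diagonalization is arguably the more elementary and transparent version of the same computation --- it avoids the algebraic identity and makes the positive-semidefinite perturbation structure visible at a glance --- while the paper's determinant computation generalizes more smoothly to perturbations that are not so obviously diagonalizable by inspection. Both arguments close the $z=0$ case identically from \eqref{derivoperatorelliptic}, and both correctly observe that $p>2$ forces $(p-2)>0$ so the rank-one term cannot lower the spectrum.
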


\begin{proof}
From \eqref{derivoperatorelliptic}, if $z=0$ the result is immediate with eigenvalues $\lambda_1=\cdots=\lambda_N=\ep^2$, so we assume that $z\neq 0$. Let $\lambda \in \R$ be such that
$$
{\rm det} \left[D A_{\ep}(z) - \lambda I_{N\times N}\right] =0.
$$

If $\lambda = \ep^2 + |z|^{p-2}$, then $\lambda \geq 0$ and it is strictly positive for $\varepsilon>0$. Assume next that $\lambda \neq \ep^2 + |z|^{p-2}$.

\medskip
From the matrix determinant formula, see for instance \cite{HarvilleD},  it follows that
$$
\begin{aligned}
{\rm det} \left[D A_{\ep}(z) - \lambda I_{N\times N}\right] &= {\rm det}\left[(\ep^2+|z|^{p-2} - \lambda)I_{N\times N} + (p-2)|z|^{p-4}z \otimes z \right] \\
&= {\rm det}\left[(\ep^2+|z|^{p-2} - \lambda ) I_{N \times N}\right]\left(1 +\frac{(p-2)|z|^{p-4}}{\ep^2+|z|^{p-2} - \lambda }z^t I_{N\times N}z\right)\\
&= \left[\ep^2+|z|^{p-2} - \lambda   \right]^{N-1}\left(\ep^2 + \left(p-1\right)|z|^{p-2} -\la\right). 
\end{aligned}
$$ 

We conclude that $\la = \ep^2 + \left(p-1\right)|z|^{p-2}$ so that $\lambda \geq 0$ and it is strictly positive if $\varepsilon>0$ proving the lemma. $ \blacksquare$
\end{proof}

\medskip

\begin{lemma}\label{hypothesesTolkdLieber}
Let $p > 2$. There exist constants $\gamma,\Gamma>0$ such that for any $\ep$ belonging to a bounded interval of $\R$, any $z\in \R^N$ and any $\zeta \in \R^N$, 
\begin{equation}\label{condtTolksLieber}
\gamma \left[\kappa + |z|\right]^{p-2}|\zeta|^2 \leq D A_{\ep}(z)\zeta \cdot \zeta \leq \Gamma \left[\kappa + |z|\right]^{p-2}|\zeta|^2,
\end{equation}
where $\kappa= \ep^{\frac{2}{p-2}}$.
\end{lemma}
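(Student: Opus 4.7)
The plan is to exploit the explicit form of $DA_{\ep}(z)$ from \eqref{derivoperatorelliptic}. For $z=0$ the quadratic form is just $\ep^2 |\zeta|^2$, and for $z \neq 0$ a direct computation yields
$$
DA_{\ep}(z)\zeta \cdot \zeta = (\ep^2 + |z|^{p-2})|\zeta|^2 + (p-2)|z|^{p-4}(z\cdot \zeta)^2.
$$
Using the trivial pointwise bound $0 \leq (z\cdot \zeta)^2 \leq |z|^2 |\zeta|^2$ I would then obtain the two-sided ``eigenvalue estimate''
$$
(\ep^2 + |z|^{p-2})|\zeta|^2 \;\leq\; DA_{\ep}(z)\zeta \cdot \zeta \;\leq\; (\ep^2 + (p-1)|z|^{p-2})|\zeta|^2,
$$
which is already consistent with the spectrum identified in the proof of Lemma \ref{ellipticityAmu}. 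Note that this step also covers the case $z=0$ (both bounds collapse to $\ep^2|\zeta|^2$), so there is no issue in splitting into cases.

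Next I would reconcile $\ep^2 + |z|^{p-2}$ with the quantity $(\kappa + |z|)^{p-2}$ appearing in \eqref{condtTolksLieber}, where by definition $\kappa^{p-2} = \ep^2$. This amounts to the elementary two-sided inequality valid for $a,b \geq 0$ and any exponent $s = p-2 > 0$:
$$
\tfrac{1}{2}(a^s + b^s) \;\leq\; (a+b)^s \;\leq\; 2^{s}(a^s + b^s).
$$
The upper bound follows from $a+b \leq 2\max(a,b)$ together with $\max(a,b)^s \leq a^s + b^s$, while the lower bound follows from $\max(a,b)^s \geq \tfrac{1}{2}(a^s + b^s)$. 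Applied with $a=\kappa$, $b=|z|$, $s=p-2$, this reads
$$
\tfrac{1}{2}(\ep^2 + |z|^{p-2}) \;\leq\; (\kappa + |z|)^{p-2} \;\leq\; 2^{p-2}(\ep^2 + |z|^{p-2}).
$$

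Combining the two pairs of inequalities gives \eqref{condtTolksLieber} with explicit constants $\gamma = 2^{-(p-2)}$ and $\Gamma = 2(p-1)$, both depending only on $p$. There is no genuine obstacle here; the most delicate point is simply the verification of the elementary inequality for $(a+b)^{p-2}$ when $p>2$, and then bookkeeping of the constants. Observe that the resulting $\gamma, \Gamma$ are actually uniform in $\ep \in \R$; the assumption that $\ep$ lies in a bounded interval only enters later, through $\kappa = \ep^{2/(p-2)}$ being uniformly bounded, when one wants to translate \eqref{condtTolksLieber} into the structural hypotheses required to invoke the regularity theory of Tolksdorf and Lieberman.
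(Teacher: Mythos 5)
Your proof is correct and follows essentially the same route as the paper: both reduce $DA_{\ep}(z)\zeta\cdot\zeta$ to the two-sided eigenvalue bound $(\ep^2+|z|^{p-2})|\zeta|^2 \leq DA_{\ep}(z)\zeta\cdot\zeta \leq (\ep^2+(p-1)|z|^{p-2})|\zeta|^2$ (you via Cauchy--Schwarz on $(z\cdot\zeta)^2$, the paper via the extremal characterization of the eigenvalues computed in Lemma \ref{ellipticityAmu}), and both then compare $\ep^2+|z|^{p-2}$ with $(\kappa+|z|)^{p-2}$ through elementary two-sided inequalities between $a^s+b^s$ and $(a+b)^s$. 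One small point in your favor: the paper's displayed chain for the upper bound slides from $(p-1)[\ep^2+|z|^{p-2}]$ to $2[\kappa+|z|]^{p-2}$ and sets $\Gamma=2$, which drops the factor $p-1$; your bookkeeping yielding $\Gamma=2(p-1)$ is the correct one, and your closing remark that $\gamma,\Gamma$ are actually independent of $\ep$ (the boundedness of $\ep$ mattering only when $\kappa$ is used downstream) is an accurate reading of the situation.
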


\begin{proof}
Let $\ep$ belong to a fixed bounded interval of $\R$. Lemma \ref{ellipticityAmu} yields that $DA_{\ep}(z)$ is a semipositive definite matrix. Let $\la_1(z) \leq  \cdots \leq \la_N(z)$ denote the eigenvalues of $DA_{\ep}(z)$. 

\medskip
Lemma \ref{ellipticityAmu} implies that $\la_1, \la_N:\R^{N} \to \R$ are continuous in $z$. On the other hand, the characterization 

$$
\la_1(z)=\min \limits_{|\zeta| =1} DA_{\ep}(z)\zeta \cdot\zeta  \quad \hbox{and} \quad \la_N(z)=\max \limits_{|\zeta| =1} DA_{\ep}(z)\zeta\cdot\zeta,
$$
and Lemma \ref{ellipticityAmu} implies that for any $z\in \R^N$ and $\zeta \in \R^N$,
$$
(\ep^2 + |z|^{p-2})|\zeta|^2\,\leq \la_1(z)|\zeta|^2 \leq DA_{\ep}(z)\zeta\cdot\zeta \leq \la_N(z)|\zeta|^2 \,\leq (\ep^2 + (p-1)|z|^{p-2})|\zeta|^2.
$$

\medskip
Recall that for any $\beta>0$ and for any $a,b\geq 0$
\begin{equation}\label{reverseYoungIneq}
a^{\beta} + b^{\beta} \leq 2(a + b)^{\beta}.
\end{equation}

Using \eqref{reverseYoungIneq} and that $p > 2$, 
\begin{equation*}
\begin{aligned}
(\ep^2 + (p-1)|z|^{p-2}) 
&\leq (p-1)\left[\ep^2 + |z|^{p-2}\right]\\
&\leq 2\left[\red{|\ep|}^{\frac{2}{p-2}}+|z|\right]^{p-2}
\end{aligned}
\end{equation*}
for some constant $C_p>0$ depending only on $p>2$. Consequently,
$$
D_{\ep}A(z)\zeta\cdot \zeta \leq 2\left[\red{|\ep|}^{\frac{2}{p-2}} +|z|\right]^{p-2}|\zeta|^2.
$$

\medskip
On the other hand, defining
$$
c_{\beta}:=
\left\{
\begin{aligned}
1,& \quad \hbox{if} \quad \beta \in [0,1)\\
2^{1-\beta},& \quad \hbox{if} \quad \beta \in [1,\infty)
\end{aligned}
\right.
$$
we get for $a,b\geq0$ that
$$c_\beta(a+b)^\beta\leq a^\beta+b^\beta.$$

Therefore, taking $\beta=p-2$ we get
$$
\begin{aligned}
\ep^2 + |z|^{p-2} \geq c_{p-2} \left[\red{|\ep|}^{\frac{2}{p-2}}+|z|\right]^{p-2}
\end{aligned}
$$
so that 
$$
D_{\ep}A(z)\zeta\cdot \zeta \geq c_{p-2}\left[\red{|\ep|}^{\frac{2}{p-2}} +|z|\right]^{p-2}|\zeta|^2.
$$

Set
$$
\gamma:= c_{p-2} \quad \hbox{and} \quad \Gamma:=2
$$
to obtain the inequalities in \eqref{condtTolksLieber}. This completes the proof.   $\blacksquare$
\end{proof}

\medskip
{\bf Remark:} From Lemma \ref{lemmaregul1}, any solution $u\in W^{1,p}_0(\Omega)$ of \eqref{eq mean problem} is bounded. By choosing  $\kappa, \gamma>0$ as in Lemma \ref{hypothesesTolkdLieber} and choosing
$$
\tilde{\Gamma}:= \Gamma + \sup \limits_{x\in \Omega}|f(u(x))|,
$$
conditions (1.4)-(1.7) in \cite{Tolksdorf84} are satisfied. These hypotheses correspond also to the condition (0.3a)-(0.3d) in \cite{Lieberman88}. 

\medskip


%
%

\begin{lemma}\label{lemmaregul2}Assume the hypotheses in Lemma \ref{lemmaregul1} and let $\ep$ belong to any bounded interval of $\R$. Then, there exists $\beta=\beta(N,p,\Omega)\in (0,1)$ independent of $\ep>0$ such that any solution of $u\in W^{1,p}_0(\Omega)$ of \eqref{eq mean problem} belongs to $C^{1,\beta}(\overline{\Omega})\cap W^{2,2}_{loc}(\Omega)$.
\end{lemma}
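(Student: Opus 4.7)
The plan is to combine the $L^\infty$-bound from Lemma \ref{lemmaregul1} with the structural conditions verified in Lemma \ref{hypothesesTolkdLieber} (and the following Remark) to place the equation in the scope of the regularity theorems of Tolksdorf \cite{Tolksdorf84} and Lieberman \cite{Lieberman88}, and then to invoke a second-order regularity result (in the spirit of \cite{CASTORINAESPOSITOSCIUNZI2009}, \cite{CingolaniVannella2003}) to obtain $W^{2,2}_{loc}(\Omega)$.

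First, by Lemma \ref{lemmaregul1}, any weak solution $u$ of \eqref{eq mean problem} lies in $L^{\infty}(\Omega)$, so that hypothesis (f1) implies $f(u)\in L^{\infty}(\Omega)$ with a bound depending only on $\|u\|_{L^\infty(\Omega)}$, $A$, and $q$. Writing \eqref{eq mean problem} in the divergence form
\begin{equation*}
-\div\bigl(A_{\ep}(\nabla u)\bigr)=f(u) \quad \text{in } \Omega, \qquad u=0 \text{ on } \partial\Omega,
\end{equation*}
Lemma \ref{hypothesesTolkdLieber} guarantees that $A_{\ep}$ satisfies the ellipticity and growth conditions (1.4)--(1.7) of \cite{Tolksdorf84} and (0.3a)--(0.3d) of \cite{Lieberman88}, with structural constants $\gamma,\Gamma$ that do \emph{not} depend on $\ep$ provided $\ep$ ranges over a fixed bounded interval. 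Together with the uniform bound on $f(u)$, this places the problem in the exact framework of Lieberman's boundary regularity theorem.

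Applying \cite[Thm.~1]{Lieberman88} (and Tolksdorf's interior estimates from \cite{Tolksdorf84} away from $\partial\Omega$) then yields the existence of $\beta=\beta(N,p,\Omega)\in(0,1)$, independent of $\ep$ in the bounded interval, together with a constant $C$ (also independent of $\ep$) such that
\begin{equation*}
\|u\|_{C^{1,\beta}(\overline{\Omega})}\le C.
\end{equation*}
This takes care of the first half of the conclusion.

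For the inclusion $u\in W^{2,2}_{loc}(\Omega)$, I would argue via the Nirenberg difference-quotient method applied to the equation, exploiting that, once $u\in C^{1,\beta}(\overline{\Omega})$, the coefficient $\ep^2+|\nabla u|^{p-2}$ is bounded and, by \eqref{derivoperatorelliptic} and Lemma \ref{ellipticityAmu}, the symbol $DA_{\ep}(\nabla u)$ is a nonnegative symmetric matrix whose smallest eigenvalue is $\ep^2+|\nabla u|^{p-2}$. Since $p>2$, the weight $|\nabla u|^{p-2}$ is locally bounded and, crucially, the power $p-2>0$ allows one to control the second difference quotients of $u$ by the $L^2_{loc}$ norm of $\nabla u$ (see the argument in \cite{CASTORINAESPOSITOSCIUNZI2009} and \cite{CingolaniVannella2003}, which deal exactly with the $(p,2)$-structure and yield $W^{2,2}_{loc}$-regularity in the superquadratic regime). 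Alternatively one may differentiate the equation in the sense of distributions and test with suitably cut-off derivatives of $u$ to obtain the required $W^{2,2}_{loc}$-estimate.

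\textbf{Main obstacle.} The delicate step is the second-order regularity. When $\ep=0$ the operator is genuinely degenerate at critical points of $u$, and the $W^{2,2}_{loc}$ bound cannot come from uniform ellipticity. The argument has to exploit the positivity of $p-2$ to absorb the degeneracy through the difference-quotient method, which is precisely where the results of \cite{CASTORINAESPOSITOSCIUNZI2009} are invoked; for $\ep\neq 0$ in a bounded interval, the problem is uniformly elliptic and standard Nirenberg-type arguments suffice, so the $\ep$-uniformity of the $W^{2,2}_{loc}$-bound is inherited from the $\ep$-uniform $C^{1,\beta}$-estimate obtained above.
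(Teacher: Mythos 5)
Your proposal is correct and follows essentially the same route as the paper: first $u\in L^\infty(\Omega)$ from Lemma \ref{lemmaregul1}, then the structural conditions verified in Lemma \ref{hypothesesTolkdLieber} (and the subsequent Remark) put the divergence-form equation $-\div A_\ep(\nabla u)=f(u)$ in the framework of \cite{Tolksdorf84} and \cite{Lieberman88}, giving a $C^{1,\beta}(\overline\Omega)$ bound with $\beta$ independent of $\ep$ in a fixed bounded interval. The only cosmetic difference is in the $W^{2,2}_{loc}$ step: the paper simply cites Proposition 1 of \cite{Tolksdorf84}, whereas you sketch the underlying Nirenberg difference-quotient argument (with pointers to \cite{CASTORINAESPOSITOSCIUNZI2009}, \cite{CingolaniVannella2003}); since Tolksdorf's Proposition 1 is precisely such a difference-quotient estimate in the superquadratic regime $p>2$, this is the same idea packaged differently.
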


\begin{proof}
From Lemma \ref{lemmaregul1}, $u\in L^{\infty}(\Omega)$. Let $\ep\in \R$ be fixed and set $h:=f(u(\cdot))$. The previous remark states that the operator ${\rm div}A_{\ep}(\nabla u)$ and the right-hand side $h$ satisfy the hypotheses in the [Theorem 1,\cite{Tolksdorf84}] and in [Theorem 1, \cite{Lieberman88}].

\medskip
Since, any bounded solution $u$ of \eqref{eq mean problem} solves weakly the equation 
$$
- \ep^2 \Delta u - \Delta_p u = h \quad \hbox{in} \quad \Omega, \qquad u=0 \quad \hbox{on} \quad \partial \Omega,
$$
there exists $\beta=\beta(N,p,\Omega)>0$ such that $u\in C^{1,\beta}(\overline{\Omega})$ and there exists
 $$C=C(N,p,\|u\|_{L^{\infty}(\Omega)},\Omega)>0$$
  such that
\begin{equation*}
\|u\|_{C^{1,\beta}(\overline{\Omega})} \leq C.
\end{equation*}

From Proposition 1 in \cite{Tolksdorf84} it follows that $u\in W^{2,2}_{loc}(\Omega)$ and this completes the proof.
 \medskip
$\blacksquare$
\end{proof}







\bigskip
\begin{proposition}\label{regularityoftheNehariequator}
Let $p> 2$ and let $\ep \neq 0$ belong to a bounded interval of $\R$. Assume hypotheses from Lemma \ref{lemmaregul1} and let $u\in W^{1,p}_0(\Omega)$ a solution of \eqref{eq mean problem}. Then, for any $r>1$, $u\in W^{2,r}(\Omega)$.
\end{proposition}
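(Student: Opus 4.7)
The plan is to leverage the $C^{1,\beta}(\overline{\Omega})$-regularity of $u$ furnished by Lemma \ref{lemmaregul2} to recast \eqref{eq mean problem} as a \emph{uniformly elliptic linear} equation in non-divergence form with continuous coefficients and bounded right-hand side, and then invoke the classical Calder\'on--Zygmund $L^r$-theory for such equations.

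First, I would combine Lemmas \ref{lemmaregul1} and \ref{lemmaregul2} to obtain $u \in C^{1,\beta}(\overline{\Omega}) \cap W^{2,2}_{loc}(\Omega) \cap L^{\infty}(\Omega)$; in particular $\nabla u$ is bounded and H\"older continuous on $\overline{\Omega}$. Using the $W^{2,2}_{loc}$-regularity, the chain rule yields a.e.\ in $\Omega$
\begin{equation*}
\Delta_p u = |\nabla u|^{p-2}\Delta u + (p-2)|\nabla u|^{p-4}\sum_{i,j=1}^{N}\partial_i u\,\partial_j u\,\partial_{ij}u,
\end{equation*}
where, since $p>2$, the map $z\mapsto |z|^{p-4}z_iz_j$ extends continuously by $0$ at $z=0$ (it is controlled by $|z|^{p-2}$). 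Substituting into \eqref{eq mean problem} recasts it as the non-divergence linear PDE
\begin{equation*}
-\sum_{i,j=1}^{N}a_{ij}(x)\,\partial_{ij}u(x) = f(u(x)) \qquad \text{a.e. in } \Omega,
\end{equation*}
whose coefficients $a_{ij}(x)$ are precisely the entries of $DA_{\ep}(\nabla u(x))$ from \eqref{derivoperatorelliptic}.

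I would then verify the three hypotheses of the linear theory. By Lemma \ref{ellipticityAmu} the eigenvalues of $(a_{ij}(x))$ lie in the fixed interval $[\ep^2,\,\ep^2+(p-1)\|\nabla u\|_{L^{\infty}(\Omega)}^{p-2}]$, so, since $\ep\neq 0$, the equation is uniformly elliptic on $\Omega$. Second, $\nabla u \in C^{0,\beta}(\overline{\Omega})$ together with the continuity of $z\mapsto |z|^{p-2}$ and $z\mapsto |z|^{p-4}z_iz_j$ on all of $\R^N$ gives $a_{ij} \in C^0(\overline{\Omega})$. Third, $f(u) \in L^{\infty}(\Omega)$ thanks to $u\in L^{\infty}(\Omega)$ and hypothesis (f1). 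With uniformly elliptic, continuous coefficients, $L^r$ right-hand side for every $r\in (1,\infty)$, smooth boundary $\partial \Omega$ and the Dirichlet datum $u|_{\partial \Omega}=0$, the global $L^r$-estimate in non-divergence form (for instance Theorem 9.15 in Gilbarg--Trudinger) yields
\begin{equation*}
\|u\|_{W^{2,r}(\Omega)} \leq C\left(\|f(u)\|_{L^r(\Omega)} + \|u\|_{L^r(\Omega)}\right) < \infty
\end{equation*}
for every $r \in (1,\infty)$, which is the desired conclusion.

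The main technical subtlety is that the formal non-divergence expansion of $\Delta_p u$ carries a singular factor $|\nabla u|^{p-4}$ wherever $\nabla u$ vanishes; both the cancellation of this singularity by the factors $\partial_i u\,\partial_j u$ (making $a_{ij}$ continuous in $x$) and the preservation of uniform ellipticity rely crucially on $p>2$ \emph{and} on $\ep\neq 0$. The latter is exactly what breaks down in the degenerate case $\ep=0$ at critical points of $u$, where this linearization argument cannot deliver a global $W^{2,r}$-bound.
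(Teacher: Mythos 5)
Your argument follows essentially the same path as the paper: both exploit the $C^{1,\beta}\cap W^{2,2}_{loc}$ regularity from Lemma \ref{lemmaregul2} to rewrite the equation in linear non-divergence form with coefficients $a_{ij}=\big(DA_\ep(\nabla u)\big)_{ij}$, check continuity of the $a_{ij}$ (crucially via $p>2$) and strict ellipticity (crucially via $\ep\neq 0$, with the paper citing Lemma \ref{hypothesesTolkdLieber} and you citing Lemma \ref{ellipticityAmu}), and then invoke the $L^r$-theory of Gilbarg--Trudinger Theorem 9.15.

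There is, however, one logical gap in your final step. You write that Theorem 9.15 ``yields $\|u\|_{W^{2,r}(\Omega)}\le C(\|f(u)\|_{L^r}+\|u\|_{L^r})<\infty$,'' but such a global a priori estimate can only be applied to a function already known to lie in $W^{2,r}(\Omega)$; at this stage you only know $u\in W^{2,2}_{loc}(\Omega)$. Theorem 9.15 actually produces a \emph{new} strong solution $w\in W^{2,r}(\Omega)\cap W^{1,r}_0(\Omega)$ of the linear Dirichlet problem with that right-hand side, and one must then identify $w$ with $u$. The paper does this via a separate uniqueness argument (Theorem 8.9 of Gilbarg--Trudinger, uniqueness of strong $W^{2,2}_{loc}\cap W^{1,2}_0$ solutions for uniformly elliptic operators with continuous coefficients), which forces $w=u$ a.e. and hence $u\in W^{2,r}(\Omega)$. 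Once you insert this identification step, your proof is complete and agrees with the paper's.
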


\begin{proof}
From Lemma \ref{lemmaregul2}, $u\in C^{1,\beta}(\overline{\Omega})\cap W^{2,2}_{loc}(\Omega)$. Set $h:=f(u(\cdot))$.

\medskip
Using hypothesis (f1), for $x,y\in \Omega$,
$$
|h(x)-h(y)|=|f(u(x))-f(u(y))|\leq C|u(x)-u(y)|\leq \tilde{C}|x-y|^{\beta}
$$
so that $h\in C^{0,{\beta}}(\overline{\Omega})$.

\medskip
Consider the BVP for $w$,
\begin{equation}\label{quasilinearapprxmu}
\left\{
\begin{aligned}
-\left(\ep^2 + |\nabla u|^{p-2}\right)\Delta w- (p-2)|\nabla u|^{p-4}\partial_i u\partial_j u \partial_{ij} w &= h \quad \hbox{in}\quad \Omega\\
w&=0 \quad \hbox{on}\quad \partial \Omega,
\end{aligned}
\right.
\end{equation}
where summation over repeated indices is understood.

\medskip
Observe that $u\in W^{2,2}_{loc}(\Omega)\cap C^{1,\beta}(\overline{\Omega})$ is a strong solution of \eqref{quasilinearapprxmu}. Set 
$$
a_{ij,\ep}:= \left(\ep^2 + |\nabla u|^{p-2}\right)\delta_{ij} + (p-2)|\nabla u|^{p-4}\partial_iu\partial_j u
$$
so that equation \eqref{quasilinearapprxmu} reads as 
\begin{equation*}
-a_{ij,\ep}(x)\partial_{ij}w =h \quad \hbox{in} \quad \Omega,  \qquad w=0 \quad \hbox{on} \quad \partial \Omega.
\end{equation*}

\medskip
Since $p> 2$ and $\ep^2 > 0$, $a_{ij,\ep} \in C(\overline{\Omega})$.  Also, Lemma \ref{hypothesesTolkdLieber} implies that $a_{ij,\ep}$ is strictly elliptic. The fact that $h\in L^{\infty}(\Omega)$ and Theorem 9.15 in \cite{GilbargTrudinger} yield that given any $r\geq 2$, there exists a unique strong solution $w\in W^{2,r}(\Omega)$ of \eqref{quasilinearapprxmu}.

\medskip
From the Theorem 8.9 in \cite{GilbargTrudinger} and the remark after it, we conclude that $w$ is the unique strong solution of \eqref{quasilinearapprxmu} in $W^{2,2}_{loc}(\Omega)\cap W^{1,2}_0(\Omega)$. Therefore, $w=u$ a.e. in $\Omega$ and so it is also unique in $W^{2,r}(\Omega)$. Since $\Omega$ is a bounded domain, $u\in W^{2,r}(\Omega)$ for any $r>1$ and this proves the result.  $\blacksquare$
\end{proof}  

The following lemma provides a useful result for the computations presented in the next section (see the proof of Lemma \ref{dif} below).

\medskip

\begin{lemma}\label{lemma p-laplacian}
If $p\geq 2$, then for any $\phi \in W^{1,p}(\Omega)\cap W^{2,2}(\Omega)$,
\begin{equation*}
	\Delta_p \phi =\vert \nabla \phi\vert^{p-2}\Delta \phi+(p-2)\vert \nabla \phi \vert ^{p-4}\sum_{i,j=1}^{N}\partial_i \phi\partial_j \phi \partial_{ij}\phi
\end{equation*}
a.e. in $\Omega$. In particular, if $\phi \in W^{1,p}(\Omega)\cap W^{2,p}(\Omega)$ then  $\Delta_p \phi \in L^{p'}(\Omega)$, where $p'=\frac{p}{p-1}$. 
\end{lemma}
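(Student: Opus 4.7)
The plan is to recognize the right-hand side as the coordinate expansion of $\operatorname{div}(\Phi(\nabla \phi))$, where $\Phi(z) := |z|^{p-2}z$, and to justify the resulting chain-rule computation by an approximation argument. The starting point is that $\Phi \in C^1(\R^N, \R^N)$ for $p \geq 2$, with derivative
\[
D\Phi(z) = |z|^{p-2}\, I_{N \times N} + (p-2)|z|^{p-4}\, z \otimes z \qquad (z \neq 0),
\]
extending continuously to $D\Phi(0) = 0$ when $p > 2$, and satisfying the pointwise bound $|D\Phi(z)| \leq C_p |z|^{p-2}$ (this is essentially the computation already used in Lemmas \ref{ellipticityAmu} and \ref{hypothesesTolkdLieber} with $\varepsilon = 0$). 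For a smooth $\phi$, applying the product rule to $\operatorname{div}(\Phi(\nabla \phi))$ yields $\Delta_p \phi = \operatorname{tr}(D\Phi(\nabla \phi)\, D^2 \phi)$, which expands into exactly the claimed two-term right-hand side; both sides vanish at the critical points of $\phi$ when $p > 2$, so the identity is unambiguous.

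To pass from this classical identity to general $\phi \in W^{1,p}(\Omega) \cap W^{2,2}(\Omega)$, I would mollify: set $\phi_n := \phi * \rho_{1/n}$ on subdomains $\Omega' \Subset \Omega$, so that $\phi_n \to \phi$ in $W^{1,p}(\Omega') \cap W^{2,2}(\Omega')$, and extract a subsequence along which $\nabla \phi_n \to \nabla \phi$ and $D^2 \phi_n \to D^2 \phi$ pointwise a.e. Writing $g_n$ for the right-hand side evaluated at $\phi_n$ and $g$ for the one at $\phi$, the smooth identity gives $\Delta_p \phi_n = g_n$ a.e., and continuity yields $g_n \to g$ pointwise a.e. Meanwhile, since $|\Phi(\nabla \phi_n)| = |\nabla \phi_n|^{p-1}$ and $\nabla \phi_n \to \nabla \phi$ in $L^p(\Omega')$, dominated convergence gives $\Phi(\nabla \phi_n) \to \Phi(\nabla \phi)$ in $L^{p'}(\Omega')$, so $\Delta_p \phi_n \to \Delta_p \phi$ in $\mathcal{D}'(\Omega')$. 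Combining these two limits identifies the distribution $\Delta_p \phi$ with the a.e.\ defined function $g$, which is precisely the asserted identity.

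For the ``in particular'' statement, H\"{o}lder's inequality with conjugate exponents $\tfrac{p}{p-2}$ and $\tfrac{p}{2}$ applied to each summand closes the argument: when $\phi \in W^{2,p}(\Omega)$, we have $|\nabla \phi|^{p-2} \in L^{p/(p-2)}(\Omega)$ and $D^2 \phi \in L^p(\Omega)$, so $\Delta_p \phi \in L^{p'}(\Omega)$ with a quantitative bound of the form $C\,\|\nabla \phi\|_{L^p(\Omega)}^{p-2}\|D^2 \phi\|_{L^p(\Omega)}$.

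The main technical obstacle is the identification of the pointwise a.e.\ limit of $g_n$ with its distributional limit: a.e.\ convergence alone does not imply convergence in $\mathcal{D}'$. For $\phi \in W^{2,p}(\Omega)$ this is routine via Vitali's theorem, since the same H\"{o}lder estimate produces an equi-integrable majorant. For the weaker regularity $\phi \in W^{2,2}(\Omega)$ one has to exploit that $g_n$ inherits the $W^{-1,p'}$-compactness of the left-hand side and combine this with the a.e.\ pointwise limit to conclude $g_n \to g$ in $\mathcal{D}'(\Omega')$, thus locking in the identity $\Delta_p \phi = g$ a.e.
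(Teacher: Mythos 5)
Your strategy differs genuinely from the paper's. You mollify $\phi$ and keep the degenerate operator, so the classical product rule applies to each smooth $\phi_n$, and the burden falls on the limit $n\to\infty$. The paper instead keeps $\phi$ fixed and regularizes the coefficient, replacing $|\nabla\phi|^{p-2}$ by $\left[\mu^2+|\nabla\phi|^2\right]^{(p-2)/2}$: for each $\mu>0$ it integrates by parts against $\psi\in C^\infty_c(\Omega)$, computes the divergence by the chain rule for the nondegenerate smooth coefficient, bounds both integrands by $\mu$-independent $L^1$ majorants, and sends $\mu\to 0^+$ via dominated convergence. Since $\phi$ itself is never approximated, there is no ``a.e.\ limit versus distributional limit'' identification to perform -- the pointwise identity holds for the given $\phi$ at every $\mu>0$, and only the integrated identity is passed to the limit.

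That identification is exactly where your proposal, as written, has a real gap. Knowing $g_n\to g$ a.e.\ and $\Delta_p\phi_n\to\Delta_p\phi$ in $\mathcal{D}'(\Omega')$ does not by itself give $g=\Delta_p\phi$ without some uniform integrability control on $\{g_n\}$, and your suggested fix (``exploit the $W^{-1,p'}$-compactness of the left-hand side'') is circular: you already have the $W^{-1,p'}$ limit, and the issue is precisely whether that limit is represented by the a.e.\ function $g$. You can in fact close the gap directly when $2<p\le 4$, without Vitali: since $\nabla\phi_n\to\nabla\phi$ in $L^p_{\rm loc}$ and $D^2\phi_n\to D^2\phi$ in $L^2_{\rm loc}$, H\"older shows $|\nabla\phi_n|^{p-2}\partial_{ij}\phi_n\to |\nabla\phi|^{p-2}\partial_{ij}\phi$ in $L^1_{\rm loc}$ whenever $\frac{p-2}{p}+\frac12\le 1$, i.e.\ $p\le 4$, which pins down the distributional limit as $g$. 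For $p>4$ the right-hand side need not lie in $L^1_{\rm loc}$ for mere $\phi\in W^{1,p}\cap W^{2,2}$, and neither your route nor the paper's covers that range without extra regularity of $\nabla\phi$; in the paper this is harmless because the lemma is applied to solutions lying in $C^{1,\beta}(\overline{\Omega})\cap W^{2,2}_{\rm loc}(\Omega)$, so $\nabla\phi$ is locally bounded, but you should be aware that the acknowledged obstacle in your last paragraph is not resolved as stated.
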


\begin{proof}
To verify this claim, we first let $\mu>0$ be arbitrary, but fixed. Next, integrating by parts we find that for any $\psi \in C^{\infty}_{c}(\Omega)$,
\begin{equation}\label{distrpLap}
\int_{\Omega} \left[\mu^2 + |\nabla \phi|^2\right]^{\frac{p-2}{2}}\nabla \phi \cdot \nabla \psi dx = - \int_{\Omega} {\rm div}\left(\left[\mu^2 + |\nabla \phi|^2\right]^{\frac{p-2}{2}}\nabla \phi \right) \psi dx.
\end{equation}
 
We estimate pointwise the integrand in the left-hand side of \eqref{distrpLap},  to find that
$$
\begin{aligned}
\left[\mu^2 + |\nabla \phi|^2\right]^{\frac{p-2}{2}}|\nabla \phi| | \nabla \psi| &\leq C\left[\mu^{p-2}+ |\nabla \phi|^{p-2}\right]|\nabla \phi||\nabla \psi|\\
&= C \left[\mu^{p-2}|\nabla \phi| + |\nabla \phi|^{p-1}\right]\|\nabla \psi\|_{L^{\infty}(\Omega)}
\end{aligned}
$$
a.e. in $\Omega$.

\medskip
On the other hand, since $\phi \in W^{2,2}(\Omega)$, we compute
\begin{multline*}
 {\rm div}\left(\left[\mu^2 + |\nabla \phi|^2\right]^{\frac{p-2}{2}}\nabla \phi \right) = \left[\mu^2 + |\nabla \phi|^2\right]^{\frac{p-2}{2}}\Delta \phi \\
 + (p-2)\left[\mu^2 + |\nabla \phi|^2\right]^{\frac{p-4}{2}}\sum_{i,j=1}^N\partial_i \phi \partial_j\phi \partial_{ij} \phi 
\end{multline*}
a.e in $\Omega$. Using this computation and Young inequality, we estimate
$$
\begin{aligned}
\left|{\rm div}\left(\left[\mu^2 + |\nabla \phi|^2\right]^{\frac{p-2}{2}}\nabla \phi \right)\right|& \leq C[\mu^{p-2} + |\nabla \phi|^{p-2}]|D^2 \phi|\\
&\leq C\left[\mu^{p-2}|D^2 \phi| + \frac{p-2}{p}|\nabla \phi|^p + \frac{2}{p}|D^2 \phi|^{\frac{p}{2}}\right]
\end{aligned}
$$
a.e. in $\Omega$. Therefore, a direct application of the {\it Dominated Convervenge Theorem} taking $\mu \to 0^+$ yields that 
$$
\begin{small}
\int_{\Omega} |\nabla \phi|^{p-2}\nabla \phi \cdot \nabla \psi dx = \int_{\Omega}\left(|\nabla\phi|^{p-2}\Delta \phi + (p-2)|\nabla \phi|^{p-4}\sum_{i,j=1}^N \partial_{i} \phi\partial_{j}\phi \partial_{ij}\phi\right) \psi dx. 
\end{small}
$$

Since $\psi \in C^{\infty}_{c}(\Omega)$ is arbitrary, we conclude that
$$
\Delta_p \phi = |\nabla\phi|^{p-2}\Delta \phi + (p-2)|\nabla \phi|^{p-4}\sum_{i,j=1}^N \partial_{i} \phi\partial_{j}\phi \partial_{ij}\phi
$$
a.e. in $\Omega$ and this proves the first part of the claim. Now, if $\phi \in W^{1,p}(\Omega)\cap W^{2,p}(\Omega)$ then  a direct application of H\"{o}der's inequality shows that $\Delta_p \phi \in L^{p'}(\Omega)$.   $ \blacksquare$
\end{proof}

\section{Morse index in the case $\ep \neq 0$}\label{computMorseIndexNondeg} 


Let $\ep \neq 0$ and let $u_{\ep} \in \MM_{\ep}$ be a local minimizer for $J_{\varepsilon}|_{\MM_\varepsilon}$. Since $\mathcal{M}_{\ep}$ is not a smooth manifold of $W_0^{1,p}(\Omega)$, we cannot infer directly the local behavior of $J_{\varepsilon}|_{\MM_\varepsilon}$ around $u_{\ep}$ to estimate its Morse index. 
We overcome this issue by adapting the approach in \cite{BartschWeth2003} to our setting and by working on a suitable dense subspace of $W^{1,p}_0 (\Omega)$ containing $u_{\ep}$.

\medskip

Consider the functionals $I_{\ep},H:W_0^{1,p}(\Omega)\to \R$ defined by 
\begin{equation*}
I_{\ep}(v):= \int_{\Omega}\left(\frac{\ep^2}{2}|\nabla v|^2 + \frac{1}{p}|\nabla v|^p \right)dx , \qquad  H(v):=\int_{\Omega}F(v)dx
\end{equation*}
for every $v\in W^{1,p}_0(\Omega)$.

\medskip
Observe that $J_{\ep}=I_{\ep}-H$ and $I_{\ep},H\in C^2(W_0^{1,p}(\Omega))$
with
$$
DI_{\ep}(v)\varphi = \int_{\Omega}\left(\ep^2 + |\nabla v|^{p-2}\right)\nabla v \cdot \nabla \varphi dx \quad \hbox{and} \quad DH(v)\varphi =\int_{\Omega} f(v)\varphi dx
$$  
for every $v,\varphi \in W^{1,p}_0(\Omega)$.

\medskip
Consider also the Banach space $W:=W_0^{1,p}(\Omega)\cap W^{2,p}(\Omega)$ endowed with the norm of $W^{2,p}(\Omega)$.

\medskip

Our first lemma is a technical result concerned with the regularity of $I_{\ep}$ and $H$ in $W$. This lemma will be used to prove that $\MM_{\ep} \cap W$ is a $C^1$-manifold embedded in $W$.

\begin{lemma}\label{dif}
Let $f$ satisfy {\it (f1)} and let $\ep \neq 0$. Consider the functions $P^\pm_{\ep}, Q^\pm:W_0^{1,p}(\Omega)\to \R$ defined by:
$$
P^\pm_{\ep}(v):=DI_{\ep}(v) (v^\pm) ,\qquad Q^\pm(v):=DH(v)(v^\pm)
$$
for every $v\in W^{1,p}_0(\Omega)$. Then,
\begin{enumerate}[label=(\alph*)]	
\item $P_{\ep}^\pm|_W\in C^1(W)$. Moreover, for every $v\in W$, $DP^\pm_{\ep}(v) \in (W_0^{1,p}(\Omega))^*$ with 
\begin{equation}\label{DPep}
\begin{aligned}
DP^\pm_{\ep}(v)\varphi &=\int_{\{\pm v>0\}}(- \ep^2 \Delta v -\Delta_p v)\varphi dx\\
& + \int_{\{\pm v>0\}} ( \ep^2 \nabla v + (p-1)|\nabla v|^{p-2}\nabla v) \cdot \nabla \varphi dx, 
\end{aligned}
\end{equation}
for every $\varphi \in W_0^{1,p}(\Omega)$.

\medskip		
\item  $Q^\pm\in C^1(W_0^{1,p}(\Omega))$ and
		\begin{equation*}
DQ^\pm(v)\varphi=\int_{\Omega}f'(v^\pm)v^\pm \varphi+\int_{\Omega} f(v^\pm)\varphi \qquad \forall  v,\varphi \in W_0^{1,p}(\Omega). 
\end{equation*}
\end{enumerate}
\end{lemma}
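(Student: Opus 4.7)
The lemma splits into two parts of rather different nature: part (b) is a straightforward differentiation under the integral sign, while part (a) requires the extra $W^{2,p}$-regularity available in $W$ in order to circumvent the non-smoothness of $v\mapsto v^\pm$. In both cases the plan is to compute the G\^ateaux derivative directly by splitting $P^\pm_\ep(v+t\varphi)-P^\pm_\ep(v)$ or the analogous expression for $Q^\pm$, pass to the limit $t\to 0$ via dominated convergence, and finally upgrade to Fr\'echet differentiability together with continuity of the derivative map.

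For part (b), I will split
\begin{equation*}
Q^+(v+t\varphi)-Q^+(v)=\int_\Omega[f(v+t\varphi)-f(v)](v+t\varphi)^+\,dx+\int_\Omega f(v)\bigl[(v+t\varphi)^+-v^+\bigr]\,dx.
\end{equation*}
Dividing by $t$, the mean value theorem combined with (f1) and the Sobolev embedding $W^{1,p}_0(\Omega)\hookrightarrow L^q(\Omega)$ sends the first term to $\int f'(v)\varphi\,v^+\,dx$; using the a.e. identity $t^{-1}[(v+t\varphi)^+-v^+]\to\chi_{\{v>0\}}\varphi$ and dominated convergence the second term tends to $\int_{\{v>0\}}f(v)\varphi\,dx$. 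The stated formula then follows by rewriting with $v=v^+$ on $\{v>0\}$, $v^+=0$ on $\{v\leq 0\}$, and $f(0)=0$ (implicit in the intended use of the lemma). Continuity of $DQ^\pm\colon W^{1,p}_0(\Omega)\to (W^{1,p}_0(\Omega))^\ast$ then follows from continuity of the Nemytskii operators generated by $f$ and $f'$ under (f1).

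Part (a) is the technical heart. For $v\in W$, Lemma \ref{lemma p-laplacian} gives $\Delta_p v\in L^{p'}(\Omega)$, and since $v^\pm\in W^{1,p}_0(\Omega)$ an integration by parts recasts $P^\pm_\ep$ as
\begin{equation*}
P^\pm_\ep(v)=\int_\Omega(-\ep^2\Delta v-\Delta_p v)\,v^\pm\,dx,
\end{equation*}
a pairing of a smooth-in-$v$ factor in $L^{p'}$ against the merely Lipschitz factor $v^\pm\in L^p$. Differentiating at $t=0$ along $\varphi\in W$, the chain rule on the smooth factor yields $-\ep^2\Delta\varphi-L_v\varphi$, where $L_v\varphi:=\mathrm{div}\bigl(|\nabla v|^{p-2}\nabla\varphi+(p-2)|\nabla v|^{p-4}(\nabla v\cdot\nabla\varphi)\nabla v\bigr)$ is the linearization of $\Delta_p$ at $v$, while the a.e. derivative of $v^+$ is $\chi_{\{v>0\}}\varphi$. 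Integrating by parts back in the resulting first contribution and using $\nabla v^+=\chi_{\{v>0\}}\nabla v$, a short computation that combines $|\nabla v|^{p-2}$ with $(p-2)|\nabla v|^{p-2}$ to produce the factor $(p-1)$ gives
\begin{equation*}
\int_\Omega(-\ep^2\Delta\varphi-L_v\varphi)\,v^+\,dx=\int_{\{v>0\}}(\ep^2+(p-1)|\nabla v|^{p-2})\nabla v\cdot\nabla\varphi\,dx,
\end{equation*}
which, together with $\int_{\{v>0\}}(-\ep^2\Delta v-\Delta_p v)\varphi\,dx$, is precisely the formula (\ref{DPep}). The right-hand side is continuous in $\varphi\in W^{1,p}_0(\Omega)$, so the formula extends by density, placing $DP^\pm_\ep(v)\in (W^{1,p}_0(\Omega))^\ast$; continuity of $DP^\pm_\ep\colon W\to (W^{1,p}_0(\Omega))^\ast$ then follows from $W^{2,p}$-convergence $v_n\to v$ yielding $|\nabla v_n|^{p-2}\nabla v_n\to|\nabla v|^{p-2}\nabla v$ and $\Delta_p v_n\to\Delta_p v$ in $L^{p'}$ (possible because $p>2$ makes $|\cdot|^{p-2}$ continuous at $0$), together with $\chi_{\{v_n>0\}}\to\chi_{\{v>0\}}$ in $L^r$ for any $r<\infty$ once $|\{v=0\}|=0$.

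The main obstacle is making the formal G\^ateaux computation of part (a) rigorous: the map $v\mapsto v^\pm$ is not Fr\'echet differentiable in any natural setting, so the integration-by-parts reformulation combined with dominated convergence on the symmetric differences $\{\pm(v+t\varphi)>0\}\triangle\{\pm v>0\}$ is essential. The $C^{1,\beta}(\overline{\Omega})$-regularity of elements of $W$ supplied by Lemma \ref{lemmaregul2} ensures $|\{v=0\}|=0$ in the intended applications and keeps these set-convergence arguments clean.
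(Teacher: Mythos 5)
Your overall route is the same as the paper's: rewrite $P^\pm_\ep(v)=\int_\Omega(-\ep^2\Delta v-\Delta_p v)v^\pm\,dx$ using $W^{2,p}$-regularity and Lemma \ref{lemma p-laplacian}, compute the G\^ateaux derivative by splitting the difference quotient, and upgrade to continuity via dominated convergence over the symmetric differences of sign sets. Your observation that $f(0)=0$ is needed for part (b) is correct (and holds under (f4), which governs all of the paper's applications of the lemma).

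However, your treatment of the set $\{v=0\}$ has a genuine gap. Twice you appeal, implicitly or explicitly, to $|\{v=0\}|=0$: you claim the ``a.e.\ identity'' $t^{-1}[(v+t\varphi)^+-v^+]\to\chi_{\{v>0\}}\varphi$ (which fails on $\{v=0\}$, where the one-sided limits are $\varphi^+$ and $\varphi^-$ respectively), and you say continuity follows from $\chi_{\{v_n>0\}}\to\chi_{\{v>0\}}$ in $L^r$ ``once $|\{v=0\}|=0$''. But the lemma must hold for every $v\in W$, and neither (f1) nor Lemma \ref{lemmaregul2} gives $|\{v=0\}|=0$. The device the paper uses (and the one your argument actually needs) is that, for $v\in W^{2,p}$, $\nabla v$, $\Delta v$ and $\Delta_p v$ vanish a.e.\ on $\{v=0\}$ by Lemma 7.7 in \cite{GilbargTrudinger} together with Lemma \ref{lemma p-laplacian}. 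This makes every integrand appearing in $DP^\pm_\ep(v)\varphi$ vanish a.e.\ on $\{v=0\}$, which both kills the nonlinear contribution of $v\mapsto v^\pm$ on that set in the G\^ateaux limit and closes the continuity estimate (the integrals over $\{v\le 0<v_n\}$ and $\{v_n\le 0<v\}$ are handled by dominated convergence after replacing $\Delta v_n,\nabla v_n$ by $\Delta v,\nabla v$ plus an $o(1)$ remainder and using that $\Delta v,\nabla v=0$ a.e.\ on $\{v=0\}$). Without this step, what you have written does not establish linearity in $\varphi$ of the G\^ateaux limit nor continuity of $DP^\pm_\ep$ for general $v\in W$.
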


\begin{proof} We proceed as in the proof of the Lemma 3.1 in \cite{BartschWeth2003}.
First, we prove {\it (a)}. H\"{o}lder's inequality implies that for each $v\in W$, $P_{\ep}^{\pm}(v)$ given by \eqref{DPep} is well defined and continuous in $W_0^{1,p}(\Omega)$. 

\medskip
Let $v,\varphi \in W$ and $t\in \R-\{0\}$ be arbitrary. Consider the sets
$$
\begin{aligned}
C_1^t:= \{v + t \varphi \geq 0, \, \, v>0\},& \qquad C_2^t:= \{v + t \varphi \geq 0, \, \, v<0\},\\
C_3^t:= \{v + t \varphi <0, \, \, v>0\},& \qquad C_4:= \{v =0\},
\end{aligned}
$$
with corresponding characteristic functions $\mathds{1}_{C^t_1}, \mathds{1}_{C^t_2},\mathds{1}_{C^t_3},\mathds{1}_{C_4}: \Omega \to \R$.

\medskip
Observe that
$$
P^+_{\ep}(v+t\varphi)-P^+_{\ep}(v)
$$
$$
\begin{aligned}
= & \int_{\Omega}\left([\ep^2+|\nabla (v+t\varphi)|^{p-2}]\nabla (v+t\varphi)\cdot \nabla (v+t\varphi)^+ -[\ep^2+|\nabla v|^{p-2}]\nabla v\cdot \nabla v^+ \right) dx\\
= & \int_{\Omega} \ep^2 \left(\nabla (v+ t\varphi) \cdot \nabla (v + t\varphi)^+ -\nabla v\cdot \nabla v^+ \right) dx + \\
& + \int_{\Omega} \left(|\nabla (v+ t\varphi)|^{p-2} \nabla (v+ t\varphi) \cdot \nabla (v+ t\varphi)^+ - |\nabla v|^{p-2}\nabla v \cdot \nabla v^+\right)dx.
\end{aligned}
$$

After lenghty, but straightforward computations involving integration by parts, the use of Lemma \ref{lemma p-laplacian} and rearranging of the terms, we find that
$$
\begin{aligned}
P^+_{\ep}(v+t\varphi)-P^+_{\ep}(v)= & \int_{\Omega} \left(- \ep^2 \Delta v - \Delta_p v \right)[ (v + t\varphi)^+ - v^+ ] dx\\
&+ \int_{\Omega} \left( |\nabla (v + t \varphi)|^{p-2} - |\nabla v|^{p-2}\right)\nabla v \cdot \nabla ( v+ t\varphi)^+ dx  \\
&+ t \int_{\Omega} \left(\ep^2 \nabla (v + t \varphi)^+  + |\nabla v|^{p-2} \nabla (v+ t \varphi)^+\right)\cdot \nabla \varphi dx \\
&+ t\int_{\Omega} \left( |\nabla (v + t \varphi)|^{p-2} - |\nabla v|^{p-2}\right)\nabla \varphi \cdot \nabla ( v+ t\varphi)^+ dx\\
=:& I_{t} + II_{t} + III_{t} + IV_{t}.
\end{aligned}
$$

From Lemma 7.7 of \cite{GilbargTrudinger} and Lemma \ref{lemma p-laplacian} in Section 3, $\nabla v$, $\Delta v$ and $\Delta_p v$ are zero a.e. on $C_4$. 

First we estimate $I_t$. Observe that
$$
\begin{aligned}
I_{t}:= & t \int_{C_1^t}(-\ep^2 \Delta v - \Delta_p v) \varphi dx + \int_{C_2^t} (-\ep^2 \Delta v - \Delta_p v) (v + t \varphi) dx+\int_{C_3^t} (\ep^2 \Delta v + \Delta_p v) v  dx\\
=& t \int_{\Omega}\left(\mathds{1}_{C_1^t}+\mathds{1}_{C_2^t}\right)(-\ep^2 \Delta v - \Delta_p v) \varphi dx+\int_{\Omega}\left(\mathds{1}_{C_3^t}-\mathds{1}_{C_2^t}\right)(\ep^2 \Delta v + \Delta_p v) v dx.
\end{aligned}
$$

Notice that either in $C_2^t$ or in $C_3^t$ we have that $|v|\leq |t||\varphi|$ implying that 
$$\left|\left(\mathds{1}_{C_3^t}-\mathds{1}_{C_2^t}\right)(\ep^2 \Delta v + \Delta_p v) v\right|\leq |t|\left(\mathds{1}_{C_3^t}+\mathds{1}_{C_2^t}\right)(\ep^2 |\Delta v|+ |\Delta_p v)|) |\varphi | $$
Also, since for $i=1,2,3$
$$
|\mathds{1}_{C_i^t}(-\ep^2 \Delta v - \Delta_p v) \varphi | \leq (\ep^2 |\Delta v|+ |\Delta_p v)|) |\varphi | 
$$
and as $t \to 0$, $\mathds{1}_{C_1^t} \to \mathds{1}_{\{v>0\}}$ and for $i=2,3$ $\mathds{1}_{C_i^t} \to 0$ a.e. in $\Omega$, the {\it Dominated Convergence Theorem} yields
\begin{equation}\label{I}
\lim \limits_{t\to 0} \frac{1}{t}I_{t} = \int_{\{v>0\}} (- \ep^2 \Delta v - \Delta_p v)\varphi dx.  
\end{equation}

Next, we estimate $II_{t}$. Since $p>2$,
$$
\begin{aligned}
II_{t} =&t \int_{\Omega} \left((p-2)\int_0^1 |\nabla v + \zeta t \varphi|^{p-4}\nabla (v + \zeta t \varphi)\cdot \nabla \varphi d\zeta \right)\nabla v\cdot \nabla (v+ t\varphi)^+dx.
\end{aligned}
$$

Thus,
$$
\begin{aligned}
\frac{1}{t}II_{t} =& \int_{\Omega} \left((p-2)\int_0^1 |\nabla\left( v + \zeta t \varphi\right)|^{p-4}\nabla (v + \zeta t \varphi)\cdot \nabla \varphi d\zeta \right)\nabla v\cdot \nabla (v+ t\varphi)^+dx \\ 
=& \int_{\Omega} \mathds{1}_{C_1^t}\left((p-2)\int_0^1 |\nabla \left( v + \zeta t \varphi\right)|^{p-4}\nabla (v + \zeta t \varphi)\cdot \nabla \varphi d\zeta \right)\nabla v\cdot \nabla (v+ t\varphi)dx\\
&+ \int_{\Omega} \mathds{1}_{C_2^t}\left((p-2)\int_0^1 |\nabla \left( v + \zeta t \varphi\right)|^{p-4}\nabla (v + \zeta t \varphi)\cdot \nabla \varphi d\zeta \right)\nabla v\cdot \nabla (v+ t\varphi)dx.
\end{aligned}
$$

Notice that there exists $C_p>0$ such that for $i=1,2$
$$
\left| \mathds{1}_{C_i^t}\left((p-2)\int_0^1 |\nabla \left( v + \zeta t \varphi\right)|^{p-4}\nabla (v + \zeta t \varphi)\cdot \nabla \varphi d\zeta \right)\nabla v\cdot \nabla (v+ t\varphi)\right| 
$$
$$
\leq  C_p(|\nabla v|^{p-2} + |t|^{p-2}|\nabla \varphi|^{p-2}) |\nabla v||\nabla \varphi|.
$$

Also, since
$$
 \mathds{1}_{C_1^t}\left((p-2)\int_0^1 |\nabla \left( v + \zeta t \varphi\right)|^{p-4}\nabla (v + \zeta t \varphi)\cdot \nabla \varphi d\zeta \right)\nabla v\cdot \nabla (v+ t\varphi) \to \mathds{1}_{\{v>0\}}|\nabla v|^{p-2}\nabla v \cdot \nabla \varphi
$$
and
$$
 \mathds{1}_{C_2^t}\left((p-2)\int_0^1 |\nabla \left( v + \zeta t \varphi\right)|^{p-4}\nabla (v + \zeta t \varphi)\cdot \nabla \varphi d\zeta \right)\nabla v\cdot \nabla (v+ t\varphi)^+ \to 0
$$
a.e. in $\Omega$, as $t\to 0$, we conclude that 
\begin{equation}\label{II}
\lim \limits_{t\to 0} \frac{1}{t}II_{t} = \int_{\{v>0\}}(p-2)|\nabla v|^{p-2}\nabla v\cdot \nabla \varphi dx.
\end{equation}

As for $III_{t}$, observe that
$$
\begin{aligned}
\frac{1}{t}III_{t}=& \int_{\Omega} \mathds{1}_{C_1^t}\left(\ep^2 \nabla (v + t \varphi)  + |\nabla v|^{p-2} \nabla (v+ t \varphi)\right)\cdot \nabla \varphi dx \\
&+ \int_{\Omega} \mathds{1}_{C_2^t}\left(\ep^2 \nabla (v + t \varphi)  + |\nabla v|^{p-2} \nabla (v+ t \varphi)\right)\cdot \nabla \varphi dx\\
&+ \ep^2\int_{\Omega} \mathds{1}_{C_4^t} \nabla (v + t \varphi)^+ \cdot \nabla \varphi dx  
\end{aligned}
$$
and proceeding in a similar fashion as above, 
\begin{equation}\label{III}
\lim \limits_{t \to 0} \frac{1}{t}III_{t} = \int_{\{v>0\}} \left(\ep^2 \nabla v  + |\nabla v|^{p-2}\nabla v\right)\cdot \nabla \varphi dx.
\end{equation}

Finally, a direct application of the {\it Dominated Convergence Theorem} proves that
\begin{equation}\label{IV}
\lim \limits_{t \to 0} \frac{1}{t}IV_{t} = 0.
\end{equation}

From \eqref{I},\eqref{II}, \eqref{III} and \eqref{IV}, and using that $v, \varphi$ are arbitrary, we conclude that $DP_{\ep}^+$ is Gateaux differentiable in $W$ and \eqref{DPep} is satisfied in the Gateaux sense.

\medskip
Next, we prove that $P^+_{\ep}|_W\in C^1(W)$. Let $v_n,v\in W$ be such that $v_n\to v$ strongly in $W$ and let $\varphi \in W$ be such that $\|\varphi\|_{W}=1$. With no loss of generality assume that $v_n(x) \to v(x)$ and $\nabla v_n(x) \to \nabla v(x)$ as $n\to \infty$ for a.e. $x\in \Omega$. Then, 
$$
\begin{aligned}
|(D P^+_{\ep}(v_n)-DP^+_{\ep}(v))\varphi| \leq& \ep^2\int_{\Omega}\left(|\Delta v_n-\Delta v||\varphi| + |\nabla (v_n -v)||\nabla \varphi|\right) dx\\
&+\int_{\Omega} \left(|\Delta_p v_n-\Delta_p v|\right)|\varphi| dx \\
&+(p-1)\int_{\Omega} \left||\nabla v_n|^{p-2}\nabla v_{n}-|\nabla v|^{p-2}\nabla v|\right| |\nabla \varphi|dx \\
&+ \int_{\{v\leq 0 < v_n\}} \ep^2\left(|\Delta v_n||\varphi| + |\nabla v_n||\nabla \varphi|\right)dx \\
&+ \int_{\{v_n\leq 0 < v\}} \ep^2\left(|\Delta v||\varphi| + |\nabla v||\nabla \varphi|\right)dx\\
&+ \int_{\{v\leq 0 < v_n\}} \left(|\Delta_p v_n||\varphi| + (p-1)|\nabla v_n|^{p-1}|\nabla \varphi|\right)dx \\
&+ \int_{\{v_n\leq 0 < v\}} \left(|\Delta_p v||\varphi| + (p-1)|\nabla v|^{p-1}|\nabla \varphi|\right)dx.  
\end{aligned}
$$

Recall that $v_n, v \in W$. Using H\"{o}der's inequality, we find a constant $C_p>0$ independent of $n$ such that
\begin{equation}\label{A1}
\ep^2\int_{\Omega}\left(|\Delta v_n-\Delta v||\varphi| + |\nabla (v_n -v)||\nabla \varphi|\right) dx \leq \ep^2 C_p\|v_n -v\|_{W} \|\varphi\|_{W}.
\end{equation}

Proceeding in the same fashion as above, using Lemma \ref{lemma p-laplacian} and taking $C_p>0$ larger if necessary,
\begin{equation}\label{A2}
\int_{\Omega} \left(|\Delta_p v_n-\Delta_p v|\right)|\varphi| dx \leq C_p \|\varphi\|_{L^{p}(\Omega)} \|\Delta_p v_n-\Delta_p v\|_{L^{\frac{p}{p-1}}(\Omega)} \longrightarrow 0,
\end{equation}
 as $n \to \infty$.\\

Arguing in the same fashion as in \eqref{A2}, we conclude also that as $n \to \infty$,
\begin{equation*}
\int_{\Omega} \left||\nabla v_n|^{p-2}\nabla v_{n}-|\nabla v|^{p-2}\nabla v|\right| |\nabla \varphi|dx \leq o(1)\|\varphi\|_{W}.
\end{equation*}

We estimate the last four integrals as follows. Cauchy-Schwarz, H\"{o}lder's inequality and the fact that $p>2$ imply that 
\begin{multline*}
\int_{\{v\leq 0 < v_n\}} \ep^2\left(|\Delta v_n||\varphi| + |\nabla v_n||\nabla \varphi|\right)dx \\
\leq \left(\int_{\{v\leq 0 < v_n\}} \ep^2\left(|\Delta v_n|^2 + |\nabla v_n|^2\right)dx\right)^{\frac{1}{2}}C_p\|\varphi\|_{W}.
\end{multline*}

Since $v_n(x) \to v(x)$ for a.e. $x \in \Omega$ and $\nabla v, \Delta v =0$ a.e. in the set $\{v=0\}$, the Dominated Convergence Theorem yields
\begin{equation}\label{A4}
\int_{\{v\leq 0 < v_n\}} \ep^2\left(|\Delta v_n||\varphi| + |\nabla v_n||\nabla \varphi|\right)dx \leq o(1)\|\varphi\|_{W}.
\end{equation}

Similarly we conclude that 
\begin{equation}\label{A5}
\int_{\{v_n\leq 0 < v\}} \ep^2\left(|\Delta v||\varphi| + |\nabla v||\nabla \varphi|\right)dx \leq o(1)\|\varphi\|_{W}.
\end{equation}

On the other hand, using H\"{o}lder's inequality and the fact that $\Delta_p v \in L^{\frac{p}{p-1}}(\Omega)$ we get
\begin{multline*}
\int_{\{v\leq 0 < v_n\}} \left(|\Delta_p v_n||\varphi| + (p-1)|\nabla v_n|^{p-1}|\nabla \varphi|\right)dx \\
\leq  C_p \left(\int_{\{v\leq 0 < v_n\}} \left(|\Delta_p v_n|^{\frac{p}{p-1}}+ |\nabla v_n|^{p}\right)dx\right)^{\frac{p-1}{p}}\|\varphi\|_{W}
\end{multline*}
and arguing as above, as $n \to \infty$
\begin{equation}\label{A6}
\int_{\{v\leq 0 < v_n\}} \left(|\Delta_p v_n||\varphi| + (p-1)|\nabla v_n|^{p-1}|\nabla \varphi|\right)dx \leq o(1)\|\varphi\|_{W}.
\end{equation}

The same argument yields 
\begin{equation}\label{A7}
\int_{\{v_n\leq 0 < v\}} \left(|\Delta_p v||\varphi| + (p-1)|\nabla v|^{p-1}|\nabla \varphi|\right)dx \leq o(1)\|\varphi\|_{W}.
\end{equation} 

Putting together \eqref{A1}-\eqref{A7},
$$
|(D P^+_{\ep}(v_n)-DP^+_{\ep}(v))\varphi|  \leq o(1)\|\varphi\|_{W}
$$
as $n \to \infty$ and since $\varphi \in W$ with $\|\varphi\|_{W} =1$ is arbitrary, we conclude that $DP_{\ep}^+$ is continuous.

\medskip
The same argument with obvious changes yields also that $P_{\ep}^-$ belongs to $C^1(W)$ and \eqref{DPep} holds true. 

\medskip
The proof of {\it (b)} goes along the same lines as the proof of {\it (a)}. This concludes the proof.  $\blacksquare$
\end{proof}


\medskip

\textbf{Remark:} We point out that all the computations in the previous proof hold true taking $\varphi \in W_0^{1,p}(\Omega)$ with $\|\varphi\|_{W^{1,p}_0(\Omega)}=1$.

\begin{lemma}\label{lemma manifold}
Let $f$ satisfy {\it (f1)} and {\it(f4)} and let $\ep \neq 0$. Assume that $\mathcal{M}_{\ep}\cap W\neq \emptyset$. Then, $\MM_{\ep}\cap W$ is a $C^1-$manifold in $W$ of codimension two.
\end{lemma}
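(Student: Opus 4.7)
The plan is to realize $\MM_{\ep} \cap W$ as the zero locus, on an open subset, of a $C^{1}$-submersion with values in $\R^{2}$. Define $\Phi : W \to \R^{2}$ by
$$\Phi(v) := \bigl( (P^{+}_{\ep} - Q^{+})(v), \ (P^{-}_{\ep} - Q^{-})(v) \bigr),$$
so that Lemma \ref{dif} yields $\Phi \in C^{1}(W, \R^{2})$. Setting $U := \{v \in W : v^{+} \neq 0 \text{ and } v^{-} \neq 0\}$, one has $\MM_{\ep} \cap W = \Phi^{-1}(0,0) \cap U$. The set $U$ is open in $W$, since $v \mapsto v^{\pm}$ is continuous (indeed $1$-Lipschitz) from $W^{1,p}_{0}(\Omega)$ to itself and $W \hookrightarrow W^{1,p}_{0}(\Omega)$ continuously. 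The task then reduces to showing that $(0,0)$ is a regular value of $\Phi|_{U}$, i.e.\ that $D\Phi(v): W \to \R^{2}$ is surjective for every $v \in \MM_{\ep} \cap W$.

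To exhibit two directions making $D\Phi(v)$ surjective, I would first compute $D\Phi(v)(v^{\pm})$. By the Remark after Lemma \ref{dif}, $DP^{\pm}_{\ep}(v)$ and $DQ^{\pm}(v)$ extend to continuous linear functionals on $W^{1,p}_{0}(\Omega)$, so these evaluations make sense despite $v^{\pm}$ possibly not belonging to $W$. Taking $\varphi = v^{+}$ in \eqref{DPep}, using $\nabla v^{+} = \mathds{1}_{\{v > 0\}} \nabla v$ and integrating by parts, gives $DP^{+}_{\ep}(v)(v^{+}) = \int_{\{v > 0\}} (2 \ep^{2} |\nabla v|^{2} + p |\nabla v|^{p}) \, dx$. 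Combining this with $DQ^{+}(v)(v^{+}) = \int_{\{v > 0\}} (f'(v) v^{2} + f(v)v)\, dx$, the Nehari identity $v^{+} \in \NN_{\ep}$, and hypothesis (f4) exactly as in the derivation of \eqref{zeroregvaluegamma}, yields
$$(DP^{+}_{\ep} - DQ^{+})(v)(v^{+}) < -(p-2)\ep^{2}\int_{\{v > 0\}} |\nabla v|^{2}\, dx < 0,$$
where the final strict inequality uses $\ep \neq 0$ and $v^{+} \not\equiv 0$ (by Lemma \ref{QualitLemma}, $\|v^{+}\|_{W^{1,p}_{0}}\geq \rho$). On the other hand, since $v^{+} = 0$ on $\{v \leq 0\}$, the formula for $DP^{-}_{\ep}$ gives $DP^{-}_{\ep}(v)(v^{+}) = 0$; and since $v^{+} v^{-} = 0$ together with $f(0) = 0$ (which is forced by (f4) and continuity of $f'$), also $DQ^{-}(v)(v^{+}) = 0$. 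Thus $D\Phi(v)(v^{+}) = (\alpha, 0)$ with $\alpha < 0$, and symmetrically $D\Phi(v)(v^{-}) = (0, \beta)$ with $\beta < 0$. The resulting diagonal $2 \times 2$ matrix is nonsingular.

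The main obstacle is that $v^{\pm} \in W^{1,p}_{0}(\Omega)$ need not lie in $W$, and so they cannot themselves serve as tangent vectors at $v$ in $W$. This is resolved by density: $C^{\infty}_{c}(\Omega) \subset W$ is dense in $W^{1,p}_{0}(\Omega)$, so one can choose sequences $\varphi^{\pm}_{n} \in C^{\infty}_{c}(\Omega) \subset W$ with $\varphi^{\pm}_{n} \to v^{\pm}$ in $W^{1,p}_{0}(\Omega)$. The continuity of $D\Phi(v)$ on $W^{1,p}_{0}(\Omega)$ gives $D\Phi(v)(\varphi^{\pm}_{n}) \to D\Phi(v)(v^{\pm})$, and since invertibility of $2 \times 2$ matrices is an open condition, the matrix with rows $D\Phi(v)(\varphi^{+}_{n})$ and $D\Phi(v)(\varphi^{-}_{n})$ is nonsingular for $n$ large. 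Therefore $D\Phi(v): W \to \R^{2}$ is surjective, and the Implicit Function Theorem exhibits $\MM_{\ep} \cap W$ as an embedded $C^{1}$-submanifold of $W$ of codimension two.
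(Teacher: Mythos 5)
Your proposal is correct and follows essentially the same route as the paper: identify $\MM_\ep\cap W$ as the zero set of $\Phi=(P^+_\ep-Q^+,P^-_\ep-Q^-)$ on the open set where $v^\pm\neq 0$, compute the diagonal $2\times 2$ matrix $D\Phi(v)(v^\pm)$ with strictly negative entries using the Nehari identity and (f4), and then circumvent $v^\pm\notin W$ by approximating in the $W^{1,p}_0$-norm from within $W$, invoking the extension of $D\Phi(v)$ to $(W^{1,p}_0)^*$ and openness of invertibility (the paper phrases the last step via continuity of an explicit determinant functional $D_v$ on $W^{1,p}_0\times W^{1,p}_0$, which is the same argument). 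One small improvement in your write-up is that you explicitly note the off-diagonal vanishing requires $f(0)=0$ and derive it from (f4), whereas the paper takes the diagonal structure of \eqref{eq mat} for granted.
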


\begin{proof}
Following the notations from Lemma \ref{dif}, recall that 
$$\mathcal{M}_{\ep}\cap W=\{v\in W\,:\,v^{\pm}\neq 0 \quad \hbox{and} \quad P^{\pm}_{\ep}(v)- Q^{\pm}(v)=0\}.
$$ 

Since $\MM_{\ep}\cap W$ is contained in an open subset of $W$, in view of the Lemma \ref{dif} and the {\it Implicit Function Theorem}, it suffices to prove that given any $v\in \MM_{\ep}\cap W$, the function $(D\left(P^{+}_{\ep}- Q^{+}\right)(v),D\left(P^{-}_{\ep}- Q^{-}\right)(v))$, which belongs to $B(W,\R^2)$, is a surjective function.

\medskip
Fix $v\in \MM_{\ep}\cap W$ and let $\lambda,\eta \in \R$. Observe that
 $$(D\left(P^{+}_{\ep}- Q^{+}\right)(v)(\lambda v^++\eta v^-),D\left(P^{-}_{\ep}- Q^{-}\right)(v)(\lambda v^++\eta v^-))$$
  can be written in matrix form as
\begin{equation}\label{eq mat}
\begin{pmatrix}
D(P_{\ep}^+ - Q^+)(v) v^+  & 0         \\[0.2em]
	0 & D(P_{\ep}^- - Q^-)(v) v^-          \\[0.2em]
	\end{pmatrix}
	\begin{pmatrix}
	\lambda         \\[0.3em]
	\eta           \\[0.3em]
	\end{pmatrix}.
	\end{equation}
Using that $v\in W$ and integrating by parts, 
$$
\begin{aligned}
D(P_{\ep}^+ - Q^+)(v)v^+= & \int_{\Omega}(2 \ep^2 |\nabla v^+|^{2}+ p|\nabla v^+|^{p}-f'(v^+)(v^+)^2 - f(v^+)v^+)dx\\
D(P_{\ep}^- - Q^-)(v) v^-= & \int_{\Omega}(2 \ep^2 |\nabla v^-|^{2}+ p|\nabla v^-|^{p}-f'(v^-)(v^-)^2 - f(v^-)v^-)dx.
\end{aligned}
$$

Using that $v\in \mathcal{M}_{\ep}$ and (f4), 
\begin{equation*}
\begin{aligned}
D(P^{\pm}_{\ep} - Q^{\pm})(v)v^\pm = &-(p-2)\ep^2\int_{\Omega}|\nabla v^\pm|^{2}dx\\
& \hspace{1.5cm}+ \int_{\{v^\pm\neq 0\}}\left((p-1)\frac{f(v^\pm)}{v^\pm}-f'(v^\pm)\right)(v^\pm)^2 dx\\
<&0.
\end{aligned}
\end{equation*}

Therefore, the matrix in \eqref{eq mat} is invertible. From this, we can not directly conclude the desired surjectivity since the functions $v^+,v^-$ do not necessarily belong to $W$. We overcome this obstacle by considering the continuous function $D_v: W_0^{1,p}(\Omega) \times W_0^{1,p}(\Omega)\longrightarrow \R $ defined as
\begin{equation*}
D_v (\varphi ,\psi ):=\det \begin{pmatrix}
  D(P_{\ep}^+ - Q^+)(v) \varphi \quad & D(P_{\ep}^+ - Q^+)(v) \psi        \\[0.2em]
D(P_{\ep}^- - Q^-)(v)\varphi \quad & D(P_{\ep}^- - Q^-)(v)\psi          \\[0.2em]
\end{pmatrix}.
\end{equation*}

The previous considerations say $D_v (v^+ ,v^-)>0$. Aproximating $v^+$ and $v^-$ by functions  $\varphi$ and $\psi$ in $W$,  it follows that $(D(P_{\ep}^{+} - Q^+)(v), D(P_{\ep}^- - Q^-)(v)) |_{span\{\varphi,\psi \} }$ is invertible. Hence $(D\left(P^{+}_{\ep}- Q^{+}\right)(v),D\left(P^{-}_{\ep}- Q^{-}\right)(v)) \in B(W,\R^2)$ is surjective. This concludes the proof. $ \blacksquare$ 
\end{proof}

\medskip
Before proving the main theorem of this section we state a technical lemma that will help us to get an upper bound for the Morse Index of $u_{\ep}$.

%
%
%

\medskip
\begin{lemma}\label{lemma codimension}
	Let $\left(X, \| \cdot\|\right)$ be a Banach space and let $Y$ and $Z$ subspaces of $X$ such that $Y \leq Z$ and the codimension of $Y$ in $Z$ is $T\in \N$. Then $\overline{Y}$, the closure of $Y$ in $X$, has codimension at most $T$ in $\overline{Z}$.
\end{lemma}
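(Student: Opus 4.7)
The plan is to reduce the statement to the classical fact that in a Banach space, the algebraic sum of a closed subspace and a finite-dimensional subspace is again closed. This is the only nontrivial ingredient I will invoke.

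First, since $Y$ has codimension $T$ in $Z$, choose vectors $z_1,\ldots,z_T\in Z$ whose images form a basis of $Z/Y$, and set $F:=\mathrm{span}\{z_1,\ldots,z_T\}$. By construction $Z=Y\oplus F$ as an algebraic direct sum.

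Next, I would form the subspace $W:=\overline{Y}+F\subseteq X$. Since $\overline{Y}$ is closed and $F$ is finite-dimensional, $W$ is closed in $X$ (this is the standard lemma: if $V\subseteq X$ is closed and $F\subseteq X$ has finite dimension, then $V+F$ is closed; the quickest proof is to pass to $X/V$, where $F$ maps to a finite-dimensional, hence closed, subspace, and pull back via the continuous quotient map). Clearly $Z=Y+F\subseteq W$, so the closedness of $W$ forces $\overline{Z}\subseteq W=\overline{Y}+F$. The reverse inclusion $\overline{Y}+F\subseteq\overline{Z}$ is immediate because $\overline{Y}\subseteq\overline{Z}$ and each $z_i\in Z\subseteq\overline{Z}$. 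Therefore $\overline{Z}=\overline{Y}+F$.

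Finally, this equality yields a surjection $F\to\overline{Z}/\overline{Y}$ given by $z\mapsto z+\overline{Y}$, so
\begin{equation*}
\dim\bigl(\overline{Z}/\overline{Y}\bigr)\leq\dim F=T,
\end{equation*}
which is the desired codimension bound. The only step requiring care is the closedness of $\overline{Y}+F$; I do not expect it to be a true obstacle, but it is the place where the finite-dimensionality of $F$ is essentially used (without it the codimension can jump after taking closures).
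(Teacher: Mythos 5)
Your proof is correct, and it takes a genuinely different route from the paper's. The paper first treats the codimension-one case by an explicit sequential argument: writing $Z = Y \oplus \R z_0$, taking a convergent sequence $t_n z_0 + y_n \to z$ in $\overline{Z}$, showing $\{t_n\}$ is bounded (otherwise $z_0 + y_n/t_n \to 0$, forcing $z_0 \in \overline{Y}$), extracting limits, and concluding $\overline{Z} = \R z_0 + \overline{Y}$; the general case then follows by induction on $T$. You instead pick a $T$-dimensional complement $F$ all at once and invoke the standard fact that the sum of a closed subspace and a finite-dimensional subspace is closed, which immediately gives $\overline{Z} \subseteq \overline{Y} + F$ and hence the codimension bound in a single step, with no induction. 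The two arguments are of comparable depth — the lemma you cite is itself proved by essentially the same quotient/sequential reasoning the paper makes explicit — but your version is more modular and shorter, at the cost of outsourcing the real work to a (correctly identified and correctly stated) black box. One minor stylistic point: you could state explicitly that $\overline{Y} + F = \overline{Z}$ and hence the codimension equals $\dim\bigl(F/(F\cap\overline{Y})\bigr) \leq T$, which makes it transparent why the inequality can be strict (when part of $F$ falls into $\overline{Y}$), exactly the case the paper handles separately with ``$z_0 \in \overline{Y}$''.
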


\begin{proof}
Assume first that $Y$ has codimension one  in $Z$ so that there exists $z_0\in Z$, $z_0 \notin Y$, for which $Z=Y\oplus \R z_0$.

\medskip
If $z_0 \in \overline{Y}$, then $\overline{Z} = \overline{Y}$ and the result follows. Assume then that $z_0 \notin \overline{Y}$. Given $z\in \overline{Z}$, there exist a sequence $\{t_n z_0+y_n\}_{n\in\N}$, with $t_n\in \R$ and $y_n\in Y$ such that $t_nz_0 + y_n \to z$ and in particular $\{t_n z_0+y_n\}_{n\in\N}$ is bounded. 

\medskip
We claim that the sequence $\{t_n\}_{n\in\N}$ is bounded. If it were not the case, we could find a subsequence $\{t_{n_k}\}_{k\in\N}$ such that $\lim\limits_{k\to\infty} t_{n_k}=\infty$.

\medskip
Thus, there would exist $M>0$ such that for every  $k\in \N$,	\begin{equation*}
	\vert t_{n_k}\vert \left\Vert z_0 +\frac{1}{t_{n_k}}y_n\right\Vert_X \leq M
\end{equation*}
	
Implying that $\Vert z_0+\frac{1}{t_{n_k}}y_{n_k}\Vert_X\to 0$ as $k\to \infty$. This is a contradiction with the fact that we have assumed $z_0\notin \overline{Y}$ and so $\{t_n\}_{n\in \N}$ is bounded.

\medskip
Hence, up to subsequences, we may assume that $t_n \to t_0 \in \R$ and $y_n \to y_0 \in \overline{Y}$ as $n\to \infty$, so that $z= t_0 z_0+y_0$. 

\medskip
Since $z\in \overline{Z}$ is arbitrary, we conclude that $\overline{Z}=\R z_0 \oplus\overline{Y}$. The general case follows from the above discussion and an induction argument. This completes the proof of the lemma.  $ \blacksquare$
\end{proof}

\bigskip
	
\begin{proof}{\it Proof of Theorem \ref{Theorem2}.}	
First we prove {\it (i)}. Since $u\in \MM_{\ep}$,  hypothesis (f4), (specifically the properties of the functions proof $g_{u^+}(t)$ and $g_{u^-}(s)$ introduced in Lemma \ref{Mnonempty}) imply that for any $(\lambda,\eta)\in\R^2-(0,0)$,
\begin{equation*}
\begin{aligned}
D^2J_{\ep}(u)(\lambda u^+ +\eta u^-,\lambda u^+ +\eta u^- )&=\lambda^2D^2J_{\ep}(u)(u^+,u^+) +\eta^2D^2J_{\ep}(u)(u^-,u^-)\\
&<0.
\end{aligned}
\end{equation*}
 Therefore, $m(u_{\ep})\geq 2$.
\medskip
In virtue of Proposition \ref{regularityoftheNehariequator}, $u \in \mathcal{M}_{\ep}\cap W$, therefore Lemma \ref{lemma manifold} implies that $\mathcal{M}_{\ep}\cap W$ is a non-empty $C^1-$manifold of $W$. 
\medskip
Then,  for every $\varphi \in T_{u}\left(\mathcal{M}_{\ep}\cap W\right)$ there exists $\delta>0$ and a $C^1$ curve $c:(-\delta, \delta) \to W$ with $c(t)\in \MM_\varepsilon$ for every $t\in (-\delta, \delta)$ such that $c(0)=u$ and $c'(0)=\varphi$. Since $u$ is a local minimizer of $J_{\ep}$ on the $C^1$-manifold $\mathcal{M}_{\ep}\cap W$, then $\beta(t):=J_\varepsilon(c(t))$ has a local minimum at $0$. By the chain rule $\beta$ is a $C^1$ function and notice that since $u$ is a critical point of $J_\varepsilon$ then
\begin{equation*}
0 \leq \lim\limits_{t\to 0^+} \frac{\beta'(t)-\beta'(0)}{t}= \lim\limits_{t\to 0^+} \frac{\left(DJ(c(t))-DJ(u)\right)}{t}(c'(t))=D^2J(u)(\varphi,\varphi).
\end{equation*}

This implies that $D^2J(u)(\varphi,\varphi)\geq 0$ for every $\varphi \in T_{u}\left(\mathcal{M}_{\ep}\cap W\right)$.
\medskip
From Lemma \ref{lemma manifold}, $T_{u}\left(\mathcal{M}_{\ep}\cap W\right)$ has codimension 2 in $W$. Finally, using that $W$ is dense in $W_0^{1,p}(\Omega)$ and applying Lemma \ref{lemma codimension} to $T_{u}\left(\mathcal{M}_{\ep}\cap W\right)\leq W$ we get that the closure of $T_{u}\left(\mathcal{M}_{\ep}\cap W\right)$ has codimension at most two in $W^{1,p}_0(\Omega)$ and so $m_{\ep}(u)\leq 2$. 

\medskip

Next, we prove {\it (ii)}. Since $u\in \MM_{\ep}$ is a local minimizers, we conclude from part {\it (i)} in Lemma \ref{Nehari compactness} that $u$ has at least two nodal regions. Using part {\it (i)} from this result, $m_{\ep}(u)=2$ and directly from Lemma \ref{nodal regions} we conclude that $u$ has exactly two nodal regions. This completes the proof of the theorem. $\blacksquare$
\end{proof}

\medskip

\section{Proof of Theorem \ref{Theorem3}}\label{pass to limit}
Throughout the rest of the developments, we assume that $f$ satisfy (f1)-(f4) and we remind the reader that for $\ep \in \R$,
$$
J_{\ep}(v)= \int_{\Omega}\left(\frac{\ep^2}{2}|\nabla v|^2 + \frac{1}{p}|\nabla v|^p - F(v)\right)dx \quad \hbox{for} \quad  v\in W^{1,p}_0(\Omega).
$$

For any $\ep \in \R$, let $u_{\ep}\in \MM_{\ep}$ be the least energy nodal solution predicted by Theorem \ref{Theorem1}. From Lemma \ref{alpha=beta},
$$
\alpha_{\ep}= \min \limits_{v\in \MM_{\ep}} J_{\ep}(v) =\min \limits_{\substack{v\in W^{1,p}_0(\Omega),\\
v^+ ,\, v^-\neq 0}} \left(\max \limits_{t,\,s \geq 0} J_{\ep}(tv^+ + sv^-)\right).
$$

Also, a consequence of the proof of Lemma \ref{alpha=beta} is that 
\begin{equation}\label{alpha=betacorol}
J_{\ep}(u_{\ep}) = \max \limits_{t,s \geq 0} J_{\ep} (t u_{\ep}^+ + s u_{\ep}^-).
\end{equation}

Next two lemmas are concerned with the convergence of the sequences $\{\alpha_{\ep}\}_{\ep \geq 0}$ and $\{u_{\ep}\}_{\ep> 0}$

\begin{lemma}\label{energyuniformbound}
 The following assertions hold true:
\begin{itemize}
\item[(i)] the family $\{\alpha_{\ep}\}_{\ep}$ is strictly increasing in $\ep>0$, $\alpha_{\ep} \to \alpha_0$, as $\ep \to 0^+$ and

\item[(ii)] there exist $C>0$ and $\ep_0>0$ such that for any $\ep \in (0,\ep_0)$,
$$
\left( \frac{1}{p}- \frac{1}{m}\right)\|u_{\ep}\|_{W^{1,p}_0(\Omega)}^p\leq \alpha_{\ep}  \leq C.
$$
\end{itemize}
\end{lemma}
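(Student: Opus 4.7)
The plan is to exploit the min–max characterization \eqref{nodalcharacteerization} together with the elementary identity $J_\ep(w)-J_0(w)=\tfrac{\ep^2}{2}\int_\Omega|\nabla w|^2\,dx$, which shows that $\ep\mapsto J_\ep(w)$ is strictly increasing in $\ep^2$ for every $w\in W^{1,p}_0(\Omega)\setminus\{0\}$. Combined with the continuous (indeed $C^1$) dependence of the scaling factors $\tau_{\ep,w^\pm}$ on $\ep$ provided by the Remark after Lemma \ref{NNdiffeomSphere}, this immediately yields both the strict monotonicity and the convergence $\alpha_\ep\to\alpha_0$. The bound in (ii) will come from the uniform coercivity in Lemma \ref{QualitLemma}(i) applied to $u_\ep\in\MM_\ep\subset\NN_\ep$.

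For the strict monotonicity in (i), I would fix $0<\ep_1<\ep_2$ and use $v:=u_{\ep_2}$ as a competitor in \eqref{nodalcharacteerization} for $\alpha_{\ep_1}$. By Lemma \ref{alpha=beta}, the unique maximizer $(t_1,s_1)$ of $(t,s)\mapsto J_{\ep_1}(tv^{+}+sv^{-})$ on $[0,\infty)^2$ lies in $(0,\infty)^2$, so $t_1 v^{+}+s_1 v^{-}\neq 0$ and
$$
J_{\ep_1}(t_1v^{+}+s_1v^{-})=J_{\ep_2}(t_1v^{+}+s_1v^{-})-\frac{\ep_2^2-\ep_1^2}{2}\int_\Omega|\nabla(t_1v^{+}+s_1v^{-})|^2\,dx<J_{\ep_2}(t_1v^{+}+s_1v^{-}).
$$
Using \eqref{nodalcharacteerization} on the left and \eqref{alpha=betacorol} on the right yields $\alpha_{\ep_1}<\alpha_{\ep_2}$. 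For the limit, I would apply the same trick in the other direction. Plugging $v=u_0$ into \eqref{nodalcharacteerization} for $\alpha_\ep$ gives $\alpha_\ep\leq\max_{t,s\geq 0}J_\ep(tu_0^{+}+su_0^{-})=J_\ep(\tau_{\ep,u_0^{+}}u_0^{+}+\tau_{\ep,u_0^{-}}u_0^{-})$; the Remark after Lemma \ref{NNdiffeomSphere} guarantees $\tau_{\ep,u_0^{\pm}}\to\tau_{0,u_0^{\pm}}=1$ as $\ep\to 0^+$, so by continuity of $J_\ep$ in $(\ep,v)$ one concludes $\limsup_{\ep\to 0^+}\alpha_\ep\leq J_0(u_0)=\alpha_0$. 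Symmetrically, using $v=u_\ep$ in the min–max formula for $\alpha_0$ together with $J_0\leq J_\ep$ gives $\alpha_0\leq\alpha_\ep$ for every $\ep>0$, so the monotone sequence $\alpha_\ep$ converges to $\alpha_0$.

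For (ii), I would pick any $\ep_0>0$; the strict monotonicity just proved gives $\alpha_\ep<\alpha_{\ep_0}=:C$ for all $\ep\in(0,\ep_0)$, which is the upper bound. For the lower bound I would first check that $\MM_\ep\subset\NN_\ep$ (since $u_\ep^{+}$ and $u_\ep^{-}$ have disjoint supports, $DJ_\ep(u_\ep)u_\ep=DJ_\ep(u_\ep)u_\ep^{+}+DJ_\ep(u_\ep)u_\ep^{-}=0$), and then apply Lemma \ref{QualitLemma}(i) to $u_\ep$ to produce the estimate $\bigl(\tfrac{1}{p}-\tfrac{1}{m}\bigr)\|u_\ep\|_{W^{1,p}_0(\Omega)}^p\leq\alpha_\ep-C_\ast$, where $C_\ast\in\R$ is the constant of Lemma \ref{QualitLemma}(i); after adjusting $C$ upward to absorb $|C_\ast|$, this gives the statement. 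The step I expect to be the most delicate is the passage to the limit $\ep\to 0^+$ in $\max_{t,s\geq 0}J_\ep(tu_0^{+}+su_0^{-})$: one needs to rule out escape to infinity of the maximizers and to identify the limit, and this is exactly what the $C^1$-dependence of $(\ep,w)\mapsto\tau_{\ep,w}$ from the Remark after Lemma \ref{NNdiffeomSphere} provides.
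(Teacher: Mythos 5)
Your proof is correct and follows essentially the same route as the paper: for strict monotonicity you project the two halves of $u_{\ep_2}$ onto $\NN_{\ep_1}$ (your $(t_1,s_1)$ are exactly the paper's $\tau_{\ep_1,u_{\ep_2}^\pm}$), use $J_{\ep_1}<J_{\ep_2}$ pointwise, then compare with $\max_{t,s}J_{\ep_2}(tu_{\ep_2}^++su_{\ep_2}^-)=\alpha_{\ep_2}$ via \eqref{alpha=betacorol}; for the limit you project $u_0^\pm$ onto $\NN_\ep$ and invoke the $C^1$-dependence of $\tau_{\ep,v}$ from the Remark after Lemma \ref{NNdiffeomSphere}, which is precisely the paper's argument. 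Your handling of (ii), including the observation that the constant $C_\ast$ from Lemma \ref{QualitLemma}(i) may be negative, is at least as careful as the paper's rather terse "from Lemma \ref{QualitLemma} the conclusion follows."
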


\begin{proof}
First, we prove {\it (i)}. Let $\ep_1,\ep_2\in [0,\infty)$ be arbitrary with $\ep_1<\ep_2$. Let also $\tau_{\varepsilon,v}>0$ denote the projection scalar defined in Lemma \ref{NNdiffeomSphere}, associated with the energy $J_{\ep}$ for the function $v$. Clearly $\tau_{\varepsilon_1,u_{\varepsilon_2}^+}u_{\varepsilon_2}^++\tau_{\varepsilon_1,u_{\varepsilon_2}^-} u_{\varepsilon_2}^- \in \mathcal{M}_{\varepsilon_1}$. Then,
\begin{equation*}
\alpha_{\ep_1} = J_{\varepsilon_1}(u_{\varepsilon_1})
\leq  J_{\varepsilon_1}(\tau_{\varepsilon_1,u_{\varepsilon_2}^+}u_{\varepsilon_2}^++\tau_{\varepsilon_1,u_{\varepsilon_2}^-} u_{\varepsilon_2}^-)
<  J_{\varepsilon_2}(\tau_{\varepsilon_1,u_{\varepsilon_2}^+}u_{\varepsilon_2}^++\tau_{\varepsilon_1,u_{\varepsilon_2}^-} u_{\varepsilon_2}^-).
\end{equation*}

From \eqref{alpha=betacorol},
$$
 J_{\varepsilon_2}(\tau_{\varepsilon_1,u_{\varepsilon_2}^+}u_{\varepsilon_2}^++\tau_{\varepsilon_1,u_{\varepsilon_2}^-} u_{\varepsilon_2}^-) \leq J_{\varepsilon_2}(u_{\varepsilon_2})
= \alpha_{\ep_2}
$$
and hence $\alpha_{\ep_1}< \alpha_{\ep_2}$.

\medskip

Next, we prove that $\alpha_\varepsilon \to \alpha_0$, as $\varepsilon\to 0^+$. The proof of Lemma \ref{NNdiffeomSphere} and the remark after it imply that  $\tau_{\ep,u_0^+}\to \tau_{0,u_0^+}$ and $\tau_{\ep,u_0^-}\to \tau_{0,u_0^-}$ as $\varepsilon\to 0^+$. Since $u_0\in\MM_0$, $\tau_{0,u_0^+}= \tau_{0,u_0^-} =1$ and hence the definition of $J_{\ep}$ yields that
\begin{equation*}
\alpha_0 <\alpha_{\ep}= J_\varepsilon(u_\varepsilon)\leq J_\varepsilon (t_{\ep,u_0}u_0^+ + s_{\ep,u_0}u_0^-)\longrightarrow J_{0}(u_0) = \alpha_0
\end{equation*}
as $\varepsilon\to 0^+$. This proves {\it (i)}.

\medskip
To prove {\it (ii)}, we notice that for any $\varepsilon_0>0$ fixed, from the previous discussion $\{\alpha_\varepsilon\,:\, \varepsilon\in (0,\varepsilon_0)\}$ is bounded and from Lemma \ref{QualitLemma} the conclusion follows. This completes the proof of the lemma.   $ \blacksquare$
\end{proof}

\medskip

{\bf Remark:} More precise information about the asymptotics of $\alpha_{\ep}$ as $\ep \to 0^+$ can be obtain as follows. 
\medskip
Following the notations in the previous proof and using Lemma \ref{alpha=beta},
$$
\begin{aligned}
\alpha_{\ep} &\leq J_{\ep}(t_{\ep,u_0}u_0^+ + s_{\ep,u_0}u_0^-)\\
& =  J_{0}(t_{\ep,u_0}u_0^++ s_{\ep,u_0}u_0^-) + \frac{\ep^2 t_{\ep,u_0}^2}{2} \int_{\Omega}|\nabla u_0^+|^2 dx + \frac{\ep^2 s_{\ep,u_0}^2}{2} \int_{\Omega}|\nabla u_0^-|^2dx.
\end{aligned}
$$
Since $u_0 \in \MM_0$, from \eqref{alpha=betacorol} for $\ep=0$,
$$
J_{0}(t_{\ep,u_0}u_0^++ s_{\ep,u_0}u_0^-) \leq  J_0 (u_0) = \alpha_0.
$$
Therefore,
\begin{equation}\label{crucialestimateI}
\alpha_{\ep} \leq \alpha_{0} +  \frac{\ep^2 t_{\ep,u_0}^2}{2} \int_{\Omega}|\nabla u_0^+|^2 dx + \frac{\ep^2 s_{\ep,u_0}^2}{2} \int_{\Omega}|\nabla u_0^-|^2dx.
\end{equation}
The convergence of $\{t_{\ep,u_0}\}_{\ep>0}$ and $\{s_{\ep,u_0}\}_{\ep>0}$, as $\ep\to 0^+$, implies that for some $\ep_0 >0$ small enough we have that $\{t_{\ep,u_0}\}_{\ep\in (0,\ep_0)}$ and $\{s_{\ep,u_0}\}_{\ep\in (0,\ep_0)}$ are bounded. 
\medskip
Thus, fixing $\ep_0>0$ as above and using \eqref{crucialestimateI}, we find a constant $C>0$ such that for any $\ep \in (0,\ep_0)$,
\begin{equation*}
\alpha_0 \leq \alpha_{\ep} \leq \alpha_0 + C \ep^2. 
\end{equation*}

\begin{lemma}\label{convergence least energies}
	There is a sequence $\{u_{\varepsilon_n}\}_{n\in \N}$ converging strongly to some $\red{{\rm u}_0} \in \MM_0$ which is a least energy nodal solution for \eqref{eq mean problem} with $\varepsilon=0$.
\end{lemma}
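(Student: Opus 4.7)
The plan is to combine the uniform $W^{1,p}_0$ bound of the family $\{u_{\varepsilon}\}_{\varepsilon>0}$ from Lemma \ref{energyuniformbound}(ii) with the classical strong-convergence argument for equations driven by the $p$-Laplacian. Concretely, I would first extract a weakly convergent subsequence and then upgrade the convergence to the strong topology of $W^{1,p}_0(\Omega)$ via the monotonicity of the map $\xi\mapsto|\xi|^{p-2}\xi$.

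Fix any sequence $\varepsilon_n\downarrow 0$. By Lemma \ref{energyuniformbound}(ii), $\{u_{\varepsilon_n}\}_{n\in\N}$ is bounded in $W^{1,p}_0(\Omega)$, so along a subsequence $u_{\varepsilon_n}\rightharpoonup {\rm u}_0$ weakly in $W^{1,p}_0(\Omega)$, strongly in $L^r(\Omega)$ for every $r\in[1,p^*)$, and a.e. in $\Omega$. In particular $u_{\varepsilon_n}^{\pm}\to {\rm u}_0^{\pm}$ in $L^q(\Omega)$, and the uniform lower bound \eqref{boundedawayfromzerolq} forces $\|{\rm u}_0^{\pm}\|_{L^q(\Omega)}\geq \rho_0>0$, so both sign parts of ${\rm u}_0$ are non-trivial.

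Next, I would upgrade to strong convergence in $W^{1,p}_0(\Omega)$ by testing the weak formulation \eqref{eq weak mean problem} against $\varphi:=u_{\varepsilon_n}-{\rm u}_0$. The three resulting terms should be handled separately: the $\varepsilon_n^2$ contribution is $O(\varepsilon_n^2)$ because, on the bounded domain $\Omega$, $W^{1,p}_0(\Omega)\hookrightarrow W^{1,2}_0(\Omega)$ for $p>2$ and $\{u_{\varepsilon_n}\}$ is $W^{1,p}_0$-bounded; the source term $\int_\Omega f(u_{\varepsilon_n})(u_{\varepsilon_n}-{\rm u}_0)\,dx$ vanishes by (f1) and strong $L^q$-convergence; and the cross term $\int_\Omega|\nabla {\rm u}_0|^{p-2}\nabla {\rm u}_0\cdot\nabla(u_{\varepsilon_n}-{\rm u}_0)\,dx\to 0$ by weak convergence, since $|\nabla {\rm u}_0|^{p-2}\nabla {\rm u}_0\in L^{p/(p-1)}(\Omega,\R^N)$. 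This leaves
\begin{equation*}
\int_{\Omega}\bigl(|\nabla u_{\varepsilon_n}|^{p-2}\nabla u_{\varepsilon_n}-|\nabla {\rm u}_0|^{p-2}\nabla {\rm u}_0\bigr)\cdot\nabla(u_{\varepsilon_n}-{\rm u}_0)\,dx\longrightarrow 0,
\end{equation*}
and the classical inequality $(|\xi|^{p-2}\xi-|\eta|^{p-2}\eta)\cdot(\xi-\eta)\geq c_p|\xi-\eta|^p$, valid for $p\geq 2$, yields $\nabla u_{\varepsilon_n}\to\nabla {\rm u}_0$ in $L^p(\Omega,\R^N)$.

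With strong convergence established, I would close the argument by passing to the limit in $\gamma_{\varepsilon_n}(u_{\varepsilon_n}^{\pm})=0$ (the $\varepsilon_n^2$ term vanishes, $\nabla u_{\varepsilon_n}^{\pm}\to\nabla {\rm u}_0^{\pm}$ strongly in $L^p$, and Vainberg's lemma handles $f(u_{\varepsilon_n}^{\pm})u_{\varepsilon_n}^{\pm}$), obtaining ${\rm u}_0^{\pm}\in\NN_0$ and hence ${\rm u}_0\in\MM_0$. By continuity of $J_0$ under strong $W^{1,p}_0$-convergence together with Lemma \ref{energyuniformbound}(i),
\begin{equation*}
J_0({\rm u}_0)=\lim_{n\to\infty}J_{\varepsilon_n}(u_{\varepsilon_n})=\lim_{n\to\infty}\alpha_{\varepsilon_n}=\alpha_0,
\end{equation*}
so ${\rm u}_0$ is a least energy nodal solution of $(\mathcal{P}_0)$. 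The main obstacle is the strong-convergence step: without it, weak lower semicontinuity of $\|\nabla\cdot\|_p^p$ only yields the one-sided inequality $DJ_0({\rm u}_0^{\pm}){\rm u}_0^{\pm}\leq 0$, forcing a rescaling by $\tau_{0,{\rm u}_0^{\pm}}\leq 1$ whose identification with the identity would be awkward; the $p$-Laplacian monotonicity trick bypasses this issue cleanly.
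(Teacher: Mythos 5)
Your argument is correct, but it takes a genuinely different route from the paper's. The paper mirrors Lemma \ref{Nehari compactness} (following \cite{CASTROCOSSIONEUBERGER1997}): it argues by contradiction, assuming one of the limits in \eqref{subsequenceeps} is a strict inequality, then the competitor $\tau_{0,v_0^+}v_0^++\tau_{0,v_0^-}v_0^-\in\MM_0$ would have $J_0$-energy strictly below $\lim_n\alpha_{\varepsilon_n}=\alpha_0$, a contradiction; strong convergence is then extracted via the uniform convexity of $W_0^{1,p}(\Omega)$. You instead exploit the fact that each $u_{\varepsilon_n}$ solves the Euler--Lagrange equation, test the weak formulation against $u_{\varepsilon_n}-{\rm u}_0$, and upgrade weak to strong convergence via the $(S_+)$/monotonicity inequality $(|\xi|^{p-2}\xi-|\eta|^{p-2}\eta)\cdot(\xi-\eta)\geq c_p|\xi-\eta|^p$ of the $p$-Laplacian. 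Both are valid. The variational route uses only that the $u_{\varepsilon_n}$ minimize over $\MM_{\varepsilon_n}$ (not that they solve the PDE), so it keeps the section within the Nehari framework and reuses the machinery of Lemma \ref{Nehari compactness}; your monotonicity route is more direct, avoids the contradiction argument entirely, and, as you rightly observe, sidesteps the weak-lower-semicontinuity gap that would otherwise only give $DJ_0({\rm u}_0^{\pm}){\rm u}_0^{\pm}\leq 0$. One small addition worth making explicit: after establishing $u_{\varepsilon_n}\to{\rm u}_0$ strongly in $W_0^{1,p}(\Omega)$, you also need $u_{\varepsilon_n}^{\pm}\to{\rm u}_0^{\pm}$ strongly in $W_0^{1,p}(\Omega)$ in order to pass to the limit in $\gamma_{\varepsilon_n}(u_{\varepsilon_n}^{\pm})=0$; this is true because $v\mapsto v^{\pm}$ is continuous on $W_0^{1,p}(\Omega)$, but it deserves a sentence.
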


\begin{proof}
This proof follows essentially the arguments of the proof of Lemma \ref{Nehari compactness}. From Lemma \ref{energyuniformbound}, we find that $\{u_{\ep}\}_{\ep>0}$ is bounded in $W^{1,p}_0(\Omega)$. The reflexivity of $W^{1,p}_0(\Omega)$ and the compactness of the Sobolev embedding yield the existence of ${\rm u}_0 \in W^{1,p}_0(\Omega)$ such that, up to a subsequence $u_{\varepsilon_n} \rightharpoonup {\rm u}_0$ weakly in $W^{1,p}_0(\Omega)$ and $u_{\varepsilon_n}^+\to {\rm u}_0^+$, $u_{\varepsilon_n}^- \to {\rm u}_0^-$ strongly in $L^r(\Omega)$ for any $r\in [1,p^*)$.

\medskip
Vainberg's lemma (see \cite{VAINBERG64}) and condition (f1) implies that as $n \to \infty$,
\begin{equation*}
\int_{\Omega}F(u_{\varepsilon_n}^\pm) dx\to \int_{\Omega} F({\rm u}_0^\pm) dx,
\end{equation*}
\begin{equation*}
\int_{\Omega}f(u_{\varepsilon_n}^\pm)u_{\varepsilon_n}^\pm dx \to \int_{\Omega} f(\rm u_0^\pm){\rm u}_0^\pm dx.
\end{equation*}

For $n\in \N$, denote $v_n:=u_{\varepsilon_n}$ and set $v_0= {\rm u}_0$. By taking further subsequences if necessary, we may assume that $\{ \|v_n^+ \| \} _n$ and $\{ \|v_n^- \| \} _n$ converge in $\Bbb R$ so that
\begin{equation}\label{subsequenceeps}
\|v_0^{+} \| _{W^{1,p}_0(\Omega)} \leq \lim\limits_{n\rightarrow \infty} \|v_n^+ \| _{W^{1,p}_0(\Omega)} \ \text{ and } \ \|v_0^{-} \| _{W^{1,p}_0(\Omega)} \leq \lim\limits_{n\rightarrow \infty} \|v_n^- \| _{W^{1,p}_0(\Omega)} .
\end{equation}

Also, observe that $v_0^+ \geq 0$ and $v_0^- \leq 0$ a.e. in $\Omega$. 
Since $v_{n}\in \MM_{\varepsilon_n}$, 
\begin{equation*}
DJ_{\varepsilon_n}(v_n^{\pm})v_n^{\pm} =0 \quad \hbox{for every} \quad n\in \mathbb{N}
\end{equation*}
and using Lemma \ref{QualitLemma} we find that
\begin{equation*}
\begin{aligned}
\int_{\Omega}f(v_0^{\pm})v_0^{\pm} dx=&\lim \limits_{n\to \infty}\int_{\Omega}f(v_n^{\pm})v_n^{\pm}dx \\ 
\geq& \lim \limits_{n\to \infty}\int_{\Omega}|\nabla v_n^{\pm}|^p dx\\\geq&  \rho^p
\end{aligned}
\end{equation*}
and hence $v_0^{\pm} \neq 0$ in $W^{1,p}_0(\Omega)$.\\

Next we prove that $v_n \to v_0$ strongly in $W^{1,p}_0(\Omega)$. It suffices to prove both equalities in \eqref{subsequenceeps} hold. To this end, let us argue by contradiction: assume either 
\begin{equation}\label{subsequence4ep}
\|v_0^{+} \| _{W^{1,p}_0(\Omega)} < \lim\limits_{n\rightarrow \infty} \|v^{+}_n \| _{W^{1,p}_0(\Omega)} \ \text{ or } \ \|v_0^{-} \| _{W^{1,p}_0(\Omega)} < \lim\limits_{n\rightarrow \infty} \|v^{-}_n \| _{W^{1,p}_0(\Omega)} .
\end{equation}

Set $a:=\tau_{0,v_0^+}$ and $b:=\tau_{0,v_0^-}$ the unique positive numbers such that $a v_0^+, bv_0^- \in \NN_{0}$. Consequently, $av_0^+ + bv_0^- \in \MM_{0}$ and then
$$
\begin{aligned}
\alpha_0 
\leq & J_{0}(av_0^+ + bv_0^-)=J_{0}(av_0^+ ) + J_{0} (bv_0^-)\\
< & \lim_{n\to \infty} J_{\ep_n}( a v_n^+)+\lim \limits_{n\to \infty}J_{\ep_n}( bv_n^-) \ \ \text{ (from }  \eqref{subsequence4ep}) \\
\leq & \lim_{n\to \infty} J_{\ep_n}(  v_n^+)+\lim \limits_{n\to \infty}J_{\ep_n}( v_n^-)  \ \ \text{ (since } v_n^+ , v_n^- \in \NN_{\ep_n}) \\
=& \lim_{n\to \infty} J_{\ep_n}(  v_n^+ + v_n^-) =\lim_{n\to \infty} \alpha_{\varepsilon_n}=\alpha_0 \quad (\text{from Lemma \ref{energyuniformbound}}).
\end{aligned}
$$

This contradiction implies that both equalities hold in \eqref{subsequenceeps}. Since $W_0^{1,p}(\Omega)$ is uniformly convex (see Theorem 2.6 in \cite{AdamsFournier}) the convergence of $\{v_n\}_{n\in \N}$ to $v_0$ is strong.

\medskip
Finally, using that $v_n \to v_0$ strongly in $W^{1,p}_0(\Omega)$, we conclude that $DJ_{0}(v_0^{\pm})v_0^{\pm}=0$ and $J_0(v_0)=\alpha _{0}$. This completes the proof. $\blacksquare$
\end{proof}  


\begin{proof}{\it Proof of Theorem \ref{Theorem3}.} 
Let $\{u_{\ep_n}\}_{n\in \N}$ and ${\rm u}_0$ be as in Lemma \ref{convergence least energies} so that $u_{\ep_n} \in \MM_{\ep_n}\cap W^{2,p}(\Omega)$ with $J_{\ep_n}=\alpha_{\ep_n}$, ${\rm u}_0 \in \MM_0$ with $J_0({\rm u}_0)=\alpha_0$ and $u_{\ep_n} \to {\rm u}_0$ strongly in $W^{1,p}_0(\Omega)$.

\medskip
Recall that 
\begin{multline}\label{2ndDerivJ0}
DJ^2_{0}({\rm u}_0)(\varphi, \varphi) = \int_{\Omega} |\nabla {\rm u}_0|^{p-2}|\nabla \varphi|^2dx + \int_{\Omega}(p-2)|\nabla {\rm u}_0|^{p-4}(\nabla {\rm u}_0 \cdot \nabla \varphi)^2 dx \\
- \int_{\Omega} f'({\rm u}_0)\varphi^2 dx
\end{multline}
for every $\varphi \in W^{1,p}_0(\Omega)$.

\medskip
Since ${\rm u}_0^+, {\rm u}_0^+ \neq 0$, we test \eqref{2ndDerivJ0} against $\varphi={\rm u}_0^{\pm}$ and use hypothesis (f4) to conclude that $m_0({\rm u}_0)\geq 2$. Thus, it suffices to show that for some subspace $V$ of $W^{1,p}_0(\Omega)$ with codimension two,
\begin{equation}\label{MorseIndexgreater2}
DJ^2_{0}({\rm u}_0)(\varphi, \varphi) \geq 0 \quad \hbox{for every}\quad \varphi \in  V.
\end{equation}

\medskip
We proceed as follows using the same notations as in the proof of Theorem \ref{Theorem2} in Section 4. 

\medskip
For every $n\in \N$, consider the functions $\Phi_{\ep_n}^{\pm}:W^{1,p}_0(\Omega) \to \R$ defined by
 \begin{align*}
 \Phi^\pm_{\varepsilon_n}(\varphi)= \int_{\Omega} ( \varepsilon_n^2 \nabla u_{\varepsilon_n}^\pm + (p-1)|\nabla u_{\varepsilon_n}|^{p-2}\nabla u_{\varepsilon_n}^\pm) \cdot \nabla \varphi dx -\int_{\Omega}f'(u_{\varepsilon_n}^\pm)u_{\varepsilon_n}^\pm \varphi dx
 \end{align*}
for $\varphi \in W^{1,p}_0(\Omega)$.

\medskip
From Lemma \ref{dif}, for every $\varphi \in W$, 
 \begin{multline*}
\Phi^+_{\varepsilon_n}(\varphi):=\int_{\{ u_{\varepsilon_n}>0\}}(- \varepsilon_n^2 \Delta u_{\varepsilon_n} -\Delta_p u_{\varepsilon_n})\varphi dx \\
+ \int_{\Omega} ( \varepsilon_n^2 \nabla u_{\varepsilon_n}^+ + (p-1)|\nabla u_{\varepsilon_n}|^{p-2}\nabla u_{\varepsilon_n}^+) \cdot \nabla \varphi dx\\
-\int_{\Omega}f'(u_{\varepsilon_n}^+)u_{\varepsilon_n}^+ \varphi dx -\int_{\Omega} f(u_{\varepsilon_n}^+)\varphi dx
\end{multline*}
and
\begin{multline*}
\Phi^-_{\varepsilon_n}(\varphi):=\int_{\{ u_{\varepsilon_n}<0\}}(- \varepsilon_n^2 \Delta u_{\varepsilon_n} -\Delta_p u_{\varepsilon_n})\varphi dx \\
+ \int_{\Omega} ( \varepsilon_n^2 \nabla u_{\varepsilon_n}^- + (p-1)|\nabla u_{\varepsilon_n}|^{p-2}\nabla u_{\varepsilon_n}^-) \cdot \nabla \varphi dx\\
-\int_{\Omega}f'(u_{\varepsilon_n}^-)u_{\varepsilon_n}^- \varphi dx -\int_{\Omega} f(u_{\varepsilon_n}^-)\varphi dx.
\end{multline*}

We conclude then that the tangent space $T_{\ep_n}(\MM_{\ep_n}\cap W)$ in $W$, consists on functions $\varphi \in W$ such that 
$$
\Phi^{+}_{\ep_n}(\varphi)=\Phi^{+}_{\ep_n}(\varphi)=0.
$$ and from the proof of part {\it (i)} in Theorem \ref{Theorem2}, for every $\varphi \in T_{u_{\varepsilon_n}}\left(\MM_{\varepsilon_n}\cap W\right)$, $D^2J_{\varepsilon_n}(u_{\varepsilon_n})(\varphi,\varphi)\geq 0$. 

\medskip
Let $n\in \N$ be arbitrary, but fixed and set
$$
V_{n}:=\overline{\{\varphi \in W\,:\, \Phi^{\pm}_{\varepsilon_n}(\varphi)=0\}}^{W_0^{1,p}(\Omega)}.
$$

We claim that $V_n = E_n$, where  
$$
E_n:={\{\varphi \in W_0^{1,p}(\Omega)\,:\, \Phi^{\pm}_{\varepsilon_n}(\varphi)=0\}}.
$$

To prove the claim, we notice first that $V_n \subseteq E_n$. Also, from Lemma \ref{lemma codimension}, ${\rm codim}V_n  \leq 2$ in $W^{1,p}_0(\Omega)$.  

\medskip
In order to prove the reverse inclusion, we prove that ${\rm codim}E_n =2$. Arguing in the same fashion as in the proof of {\it (ii)} and {\it (iii)} in Lemma \ref{furtherpropofNN}, we find that $$
\Phi^{+}_{\ep_n}(u^+_{\ep_n}),\Phi^{-}_{\ep_n}(u^-_{\ep_n}) <0 \quad \hbox{and} \quad \Phi^{+}_{\ep_n}(u_{\ep_n}^{-})= \Phi^{-}_{\ep_n}(u_{\ep_n}^{+})=0.
$$  

Therefore, given any $\varphi \in W^{1,p}_0(\Omega)$, we can set 
\begin{equation}\label{choiceab}
a:= -\frac{\Phi^+_{\ep_n}(\varphi)}{\Phi^+_{\ep_n}(u_{\ep_n}^+)} \quad \hbox{and} \quad b:= -\frac{\Phi^-_{\ep_n}(\varphi)}{\Phi^-_{\ep_n}(u_{\ep_n}^-)}
\end{equation}
so that $w_{n}:= \varphi - au^{+}_{\ep_n} - b u^-_{\ep_n}$ satisfies $\Phi^+_{\ep_n}(w_{n})= \Phi^-_{\ep_n}(w_{n})=0$. This proves that given any $\varphi \in W^{1,p}_0(\Omega)$, there exist unique $a,b \in \R$ and $w_{\ep_n}\in E_n$ such that $\varphi = a u^+_{\ep_n} + b u_{\ep_n}^- + w_{\ep_n}$, i.e. $$
W^{1,p}_0(\Omega)= \R{u_{\ep_n}^+}\oplus \R {u_{\ep_n}^-} \oplus E_n.
$$

Even more, from \eqref{choiceab} it follows that for any $\varphi \in E_n$, $a=b=0$.

\medskip
Since $V_n, E_n$ are closed subspaces of $W^{1,p}_0(\Omega)$, $V_n \subset E_n$ and
$$
{\rm codim} E_n= 2 \geq {\rm codim} V_n,
$$ 
we conclude that $V_n=E_n$ and this proves the claim.

\medskip
We finish the proof of Theorem \ref{Theorem3} by a limiting process. Define 
 \begin{align*}
\Phi^\pm_{0}(\varphi):= \int_{\Omega}  (p-1)|\nabla u_{0}|^{p-2}\nabla u_{0}^\pm \cdot \nabla \varphi dx -\int_{\Omega}f'(u_{0}^\pm)u_{0}^+ \varphi dx
\end{align*}
for $\varphi \in W^{1,p}_0(\Omega)$ and set
$$ 
V:=\{\varphi\in W_0^{1,p}(\Omega)\,:\, \Phi^{\pm}_0(\varphi)=0\}.
$$

Proceeding as above, it follows that ${\rm codim}V =2$ in $W^{1,p}_0(\Omega)$. We prove next that inequality \eqref{MorseIndexgreater2} holds true for this choice of $V$. Let $\varphi \in V$ be arbitrary, but fixed. Using \eqref{choiceab}, we set
$$
a_n:=-\dfrac{\Phi^+_{\varepsilon_n}(\varphi)}{\Phi^+_{\varepsilon_n}(u_{\varepsilon_n}^+)}
\quad \hbox{and} \quad b_n:=-\dfrac{\Phi^-_{\varepsilon_n}(\varphi)}{\Phi^-_{\varepsilon_n}(u_{\varepsilon_n}^-)}
$$
so that $w_n:=\varphi - a_{n}u_{\ep_n}^+ - b_{n}u_{\ep_n}^- \in V_{n}$.

\medskip
Since $u_{{\varepsilon_n}} \to {\rm u}_0$ strongly  in $W^{1,p}_0(\Omega)$, as $n \to \infty$ and $\Phi^\pm_0({\rm u}_0^\pm)<0$, 
$$
a_{n}\to -\dfrac{\Phi^+_0(\varphi)}{\Phi^+_0({\rm u}_0^+)}=0 \quad  \hbox{and} \quad b_{n}\to -\dfrac{\Phi^-_0(\varphi)}{\Phi^-_0({\rm u}_0^-)}=0.
$$

Consequently, as $n \to \infty$, $\varphi + a_{n}u_{\ep_n}^+ + b_{n}u_{\ep_n}^- \to \varphi$ strongly in $W_0^{1,p}(\Omega)$. On the other hand, notice that
\begin{align*}
\left|D^2J_{\varepsilon_n}(u_{\ep_n})(w_n,w_n)-D^2J_{0}({\rm u}_0)(\varphi,\varphi)\right|\leq &\\
 &\left|\left(D^2J_{0}(u_{\ep_n})-D^2J_{0}({\rm u}_0)\right)(w_n,w_n)\right|\\
 &+\left|D^2J_{0}({\rm u}_0)(w_n,w_n)-D^2J_{0}({\rm u}_0)(\varphi,\varphi)\right|\\
 &+\ep_n^2\int_{\Omega}|\nabla w_n|^2dx
\end{align*}

Since $u_{\ep_n}\to {\rm u}_0$ strongly in $W^{1,p}_0(\Omega)$ and $a_n,b_n \to 0$, as $n\to \infty$, we conclude that $D^2J_{\varepsilon_n}(u_{\ep_n})(w_n,w_n)\to D^2J_{0}({\rm u}_0)(\varphi,\varphi)$ as $n\to \infty$. Using that $w_n \in V_n$ and that $D^2 J_{\ep_n}(u_{\varepsilon_n})(\cdot,\cdot)|_{V_{n}} \geq 0$, we conclude that $D^2J_{0}({\rm u}_{0})(\varphi,\varphi) \geq 0$. Since $\varphi \in V$ is arbitrary and $V$ has codimension two in $W^{1,p}_0(\Omega)$, we find that $m_0(u_0) \geq 2$.

\medskip
Therefore, $m_0({\rm u}_0)=2$. Finally, Lemma  \ref{nodal regions} implies that ${\rm u}_0$ has exactly two nodal domains. This completes the proof of the theorem.  $\blacksquare$
\end{proof}

\section{Proof of Theorem \ref{Theorem4}.}
We begin this section with some comments that are crucial for subsequent developments.

\begin{itemize}
\item[(a)] From Lemma \ref{Deformation}, for every $\varepsilon \geq 0$, any local minimizer of $J_\varepsilon |_{\MM_\varepsilon}$ is a critical point of $J_\varepsilon$.

		\item[(b)]  In particular, for $\varepsilon=0$, if $u \in \MM_0$ is a local minimizers of $J_0|_{\MM_0}$, which is in addition isolated critical point of $J_0$,  then it must be a strict local minimizer of $J_0|_{\MM _0}$.
		
		\item [(c)] Since $p >2$ and $f \in C^1(\R)$, the energy $J_0:W^{1,p}_0(\Omega) \to \R$ and it derivative $DJ_0:W^{1,p}_0(\Omega) \to (W^{1,p}_0(\Omega))^*$ are uniformly continuous on bounded subsets of $W^{1,p}_0(\Omega)$. 
\end{itemize}

\medskip
	
\begin{proof}{\it Proof of Theorem \ref{Theorem4}.} 
For the sake of clarity, we present this proof splited in five steps. Let $f$ satisfy (f1)-(f4). 

\bigskip
%

\textbf{Claim 1.} Let $R>0$ and $\varepsilon'>0$ be arbitrary, but fixed. There exists $K>0$, depending only on $R$ such that for every $\varepsilon \in [0,\varepsilon']$ and every $v \in B_R(0)\cap \NN_{\varepsilon'}$, 
$$
|\tau_{\varepsilon, v}-\tau_{0,v}|\leq K\varepsilon^2,
$$
where $\tau_{\ep,v},\tau_{0,v}>0$ are such that $\tau_{\ep,v}v\in \NN_{\ep}$ and $\tau_{0,v}v\in \NN_0$ are the scalars described in Lemma \ref{Mnonempty}.

\medskip		
\textit{Proof Claim 1.} First we remark that the implicit function theorem applied to the function $\xi$, introduced in the proof of Lemma \ref{NNdiffeomSphere}, yields that
the function 
$$
\R \times W^{1,p}_0(\Omega)-\{0\}\ni (\ep,v) \mapsto \tau_{\varepsilon, v} \in (0,\infty)$$ 
is continuously differentiable with 
\begin{equation}\label{derivativetau}
\begin{aligned}
\frac{\partial \tau_{\ep,v}}{\partial \ep} = &  - \frac{\partial \xi}{\partial \ep}(\ep, \tau_{\ep,v},v)\left[\frac{\partial \xi}{\partial \tau}(\ep, \tau_{\ep,v},v)\right]^{-1}\\
= & -\dfrac{2\ep \int_{\Omega}|\nabla \tau_{\ep,v}v|^2dx}{ (p-2)\ep^2 \tau_{\ep,v}\int_{\Omega} |\nabla v|^2 dx  +  \int_{\Omega}\left(f'(\tau_{\ep,v}v)\tau_{\ep,v} v^2- (p-1)f(\tau_{\ep,v} v)v \right)dx}.
\end{aligned}
\end{equation}

Using hypothesis (f4), we conclude that both integrals on the denominator of \eqref{derivative t} are positive and therefore for any fixed $v\in W^{1,p}_0(\Omega)$, $v\neq 0$, the function $[0,\infty)\ni \ep \mapsto \tau_{\ep,v}>0$ is a strictly decreasing function in $\ep$.

\medskip
Let $\ep \in [0,\ep']$ and $v\in \NN_{\ep'}\cap B_R(0)$. Part {\it (ii)} in Lemma \ref{Mnonempty} implies that $DJ_\varepsilon(v)(v)\leq 0$ and part {\it (iii)} from the same Lemma yields $\tau_{\varepsilon, v}\in (0,1]$.

\medskip
Using the Mean Value Theorem and \eqref{derivativetau}, we find $\eta \in (0,\varepsilon)$ such that
\begin{equation}\label{derivative t}
		|\tau_{\varepsilon,v}-\tau_{0,v}|=\dfrac{2\eta\ep \int_{\Omega}|\nabla \tau_{\eta,v}v|^2dx}{ (p-2)\ep^2 \tau_{\eta,v}\int_{\Omega} |\nabla v|^2 dx  +  \int_{\Omega}\left(f'(\tau_{\eta,v}v)\tau_{\eta,v} v^2- (p-1)f(\tau_{\eta,v} v)v \right)dx}
\end{equation}
	
Observe that 
\begin{equation}\label{estimatelate}
2\eta\ep \int_{\Omega}|\nabla \tau_{\eta,v}v|^2 \leq 2 \ep^2 R^2.
\end{equation}

 Also, We rewrite the second term of the denominator in \eqref{derivative t} as
	\begin{equation*}
 \frac{1}{\tau_{\eta,v}}\int_{\Omega}\left(f'(\tau_{\eta,v}v)(\tau_{\eta,v} v)^2- (p-1)f(\tau_{\eta,v} v)\tau_{\eta,v}v, \right)dx. 
\end{equation*}

From part {\it (iii)} in Lemma \ref{QualitLemma}, 
$$
\bigcup_{\hat{\ep}\geq 0} \NN_{\hat{\ep}} \subset \{w\in W^{1,p}_0(\Omega)\,:\, \|w\|_{L^q(\Omega)} \geq \rho\}
$$
and since $0<\tau_{\eta,v}\leq \tau_{\ep,v}\leq 1$ and $\tau_{\eta,v}v\in \bigcup_{\hat{\ep}\geq 0} \NN_{\hat{\ep}}$,  
\begin{equation}\label{problem min}
\frac{1}{\tau_{\eta,v}}\int_{\Omega}\left(f'(\tau_{\eta,v}v)(\tau_{\eta,v} v)^2- (p-1)f(\tau_{\eta,v} v)\tau_{\eta,v}v, \right)dx \geq M,
\end{equation}
where 	
\begin{equation}\label{min problem}
M=\inf\left\{I(w):= \int_{\Omega}f'(w)w^2- (p-1)f(w)wdx\Big|\,\, w\in B_R(0), \|w\|_{L^q(\Omega)} \geq \rho \right\}.
	\end{equation}

	
Next, we prove that $M>0$. Arguing by contradiction, assume that $\{w_n\}_{n\in \N} \subset B_R(0)$ with $\|w_n\|_{L^q(\Omega)} \geq \rho$ is a minimizing sequence for \eqref{min problem} such that $\lim\limits_{n\to \infty} I(w_n)=0$.

\medskip
The compactness of the Sobolev embedding $W^{1,p}_0(\Omega) \hookrightarrow L^q(\Omega)$ implies that there exists $w\in W_0^{1,p}(\Omega)$ such that, up to a subsequence, $w_n \to w$ strongly in $L^q(\Omega)$. In particular, $\|w\|_{L^q(\Omega)} \geq \rho >0$

\medskip
On the other hand, Vainberg's Lemma implies that
$$
I(w)= \int_{\Omega}f'(w)w^2- (p-1)f(w)wdx=0.
$$

Using again hypothesis (f4), 
$$
f'(w)w^2- (p-1)f(w)w=0 \quad \hbox{a.e. in} \quad \Omega 
$$
and consequently $w=0$ a.e. in $\Omega$. This is clearly a contradiction and hence $M>0$. 

\medskip
From \eqref{derivative t},\eqref{estimatelate} and \eqref{problem min}, we conclude that 
$$
|\tau_{\ep,v}- \tau_{0,v}| \leq \frac{2R}{M}\ep^2
$$
and this proves the claim.

\bigskip
Next, let $u\in \MM_0$ be a strict local minimizer of $J_{0}|_{\MM_0}$ so that there exists $r>0$ with 
$$
J_{0}(u) < J_0(v) \quad \hbox{for every} \quad v\in \MM_0\cap B_r(u), \quad v\neq u.
$$

\textbf{Claim 2.} For every $s_1,s_2\in(0,r)$ with $s_1<s_2$,
\begin{equation*}
J_0(u)<\inf \limits_{\substack{v \in \MM_0,\\ s_1 \leq \Vert v-u\Vert_{W^{1,p}_0(\Omega)} \leq s_2}} J_0(v).	
\end{equation*}

\textit{Proof of Claim 2.} 

Proceeding by contradiction, assume the existence of a sequence $\{v_n\}_{n\in \N}\subset\MM_0$ such that $\Vert v_n-u\Vert_{W_0^{1,p}(\Omega)}\in[s_1,s_2]$ and $J_0(v_n)\to J_0(u)$, as $n\to \infty$. 

\medskip
Since $\{v_n\}_{n\in \N}$ is bounded in $W^{1,p}_0(\Omega)$, there exists a subsequence, which we denote the same, and there exists $v\in \overline{B_{s_2}(u)}$ such that $v_n \rightharpoonup v$ weakly in $W^{1,p}_0(\Omega)$. 

\medskip
Using that $u\in \MM_0$ is a least energy nodal solution for $(\mathcal{P}_0)$ and part {\it (ii)} in  Lemma \ref{Nehari compactness},  we conclude that $v_n\to v$ strongly in $W_0^{1,p}(\Omega)$. Therefore, $v \in \MM_0$, $\Vert v-u\Vert_{W_0^{1,p}(\Omega)}\in [s_1,s_2]$ and $J_0(v)=J_0(u)$. This contradicts the fact $u$ is a strict minimizer in $B_r(u)$. This proves the claim.

\bigskip

\textbf{Claim 3.} For every $s\in (0,\frac{r}{2})$, there exists $\ep_0>0$ such that for every $\varepsilon\in (0,\ep_0)$,  
		\begin{equation}\label{approximating inf}
		\inf \limits_{v\in B_s(u)\cap \MM_\varepsilon} J_\varepsilon(v)
		\end{equation}		
is well defined and it is attained.

\medskip
\textit{Proof Claim 3.} Let $s\in (0,\frac{r}{2})$ be arbitrary, but fixed. Since the function 
\begin{equation}\label{functiontau}
\R \times W^{1,p}_0(\Omega)-\{0\}\ni (\ep,v) \mapsto \tau_{\varepsilon, v} \in (0,\infty)
\end{equation} 
is continuously differentiable, there exists $\hat{\ep}_0>0$ and $s'\in (0,s)$ such that for every $\varepsilon\in [0,\hat{\ep}_0)$ and every $v\in B_{s'}(u)$,
\begin{equation}\label{projection ball}
\Vert \tau_{\varepsilon, v^+}v^++ \tau_{\varepsilon, v^-}v^--u\Vert_{W^{1,p}_0(\Omega)}<s'.
\end{equation}

The continuity of the mappings 
$$
W^{1,p}_0(\Omega)\ni v\to v^{\pm} \in W^{1,p}_0(\Omega)
$$
allows to assume further that $s'\in (0,s)$ is such that  for every $v\in B_{s'}(u)$, $v^{\pm}\neq 0$. In particular, $B_{s'}(u)\cap \MM_{\ep}$ is non-empty and hence the infimum in \eqref{approximating inf} is well defined.

\medskip
Next, we prove that the infimum in \eqref{approximating inf} is attained. Fix $s_1 \in (0,s')$ and set $s_2=\frac{r}{2}$. Claim 1, guarantees the existence of $C_{s_1}>0$ such that 
\begin{equation}\label{Cs1}
J_0(u)=\inf \limits_{\substack{v \in \MM_0,\\ s_1 \leq \Vert v-u\Vert_{W^{1,p}_0(\Omega)} \leq s_2}} J_0(v) \,-\, C_{s_1}.	
\end{equation}

For any given $\varepsilon>0$, set $w_\varepsilon:=\tau_{\varepsilon,u^+}  u^++\tau_{\varepsilon, u^-} u^-$. Observe that $w_{\ep}\in \MM_{\ep}$. The continuity in $\ep \in \R$ of the function in \eqref{functiontau}, fixing $v= u^+$ and $v= u^-$, yields the existence of $\hat{\ep}_1>0$ such that for $\varepsilon\in (0,\hat{\ep}_1)$, $w_\varepsilon \in B_{s_1}(u)$ and since and $J_\varepsilon (w_\varepsilon)\to J_0(u)$ as $\varepsilon \to 0^+$, we may choose $\hat{\ep}_1$ smaller if necessary, so that
\begin{equation}\label{first ineq}
J_\varepsilon(w_\varepsilon)<J_0(u)+\frac{C_{s_1}}{4}.
\end{equation}

Finally, we claim that there exists $\hat{\ep}_2>0$ such that for every $\varepsilon \in (0,\hat{\ep}_2)$ and for every $v \in \left(B_{s}(u)\setminus\overline{B_{s'}(u)}\right)\cap \MM_\ep$,
\begin{equation}\label{second ineq}
 \hbox{and} \quad 
	J_0(\tau_{0,v^+}v^++\tau_{0,v^-}v^-)<J_\varepsilon(v)+\frac{C_{s_1}}{4}.	  
\end{equation}

Let us assume for the moment that this last claim holds true. We finish the proof of Claim 3 proceeding as follows. 

\medskip
Set $\ep_0:=\min\{\hat{\ep}_0, \hat{\ep}_1,\hat{\ep}_2\}$. Let $\varepsilon \in (0,\ep_0)$ and $v\in \left(B_{s}(u)\setminus\overline{B_{s'}(u)}\right)\cap \MM_\ep$ be arbitrary, but fixed.

\medskip
Observe that $\tau_{0, v^+}v^++\tau_{0,v^-}v^- \in B_{s_2}(u)\setminus\overline{B_{s_1}(u)}$. Combining \eqref{first ineq}, \eqref{second ineq} and \eqref{Cs1}, the choice of $C_{s_1}$, 
$$
\begin{aligned}
J_{\ep}(w_{\ep}) < &  J_0(u) + \frac{C_{s_1}}{4} \qquad (\hbox{from} \, \eqref{first ineq}) \\
= & \inf \limits_{\substack{v \in \MM_0,\\ s_1 \leq \Vert v-u\Vert_{W^{1,p}_0(\Omega)} \leq s_2}} J_0(v) \,-\, \frac{3C_{s_1}}{4} \qquad (\hbox{from} \, \eqref{Cs1})\\
\leq & J_{0}(\tau_{0,v^+}v^+ + \tau_{0,v^-}v^-) - \frac{3C_{s_1}}{4} \\
\leq & J_{\ep}(v) - \frac{C_{s_1}}{2} \qquad (\hbox{from} \, \eqref{second ineq}).
\end{aligned}
$$

\medskip
We conclude that for every $\varepsilon \in (0,\ep_0)$ and every $v\in \left(B_{s}(u)\setminus\overline{B_{s'}(u)}\right)\cap \MM_\ep$, 
\begin{equation}\label{cruciale}
J_\varepsilon(w_\varepsilon)<J_\varepsilon(v)-\frac{C_{s_1}}{2}.
\end{equation}

Let $\varepsilon\in (0,\ep_0)$ and let $\{v_n\}_{n\in \N} \subset \MM_{\ep}\cap \overline{B_s(u)}$ be a minimizing sequence for \eqref{approximating inf}. We may assume with no loss of generality that $v_n \rightharpoonup v$ weakly  in $W^{1,p}_0(\Omega)$ for some $v$ satisfying that $\Vert v-u\Vert_{W^{1,p}_0(\Omega)} \leq s$. 

\medskip
Therefore, from \eqref{projection ball} and the part {\it (ii)} in Lemma \ref{Nehari compactness}, the infimum \eqref{approximating inf} is attained at $v\in \overline{B_{s}(u)}\cap \MM_\varepsilon$. Finally, from \eqref{cruciale} we conclude that $v \in B_{s'}(u)$ and this completes the proof of Claim 3.

\medskip
Next, we prove \eqref{second ineq}. Notice first that for any $\varepsilon>0$ and any $v \in \MM_\ep$, $\tau_{0,v^+} \leq \tau_{\ep,v^+}= 1$ and $\tau_{0,v^-} \leq \tau_{\ep,v^-}=1$. Thus, the Mean Value Theorem and Claim 1, with $R=r$, imply that for any $\ep>0$ and any $v\in B_r(0)\cap \MM_{\ep}$,  
$$
\begin{aligned}
| J_0(\tau_{\varepsilon,v^+}v^+)-J_0(\tau_{0,v^+}v^+)| \leq & \max \limits_{t\in [\tau_{0,v^+},\tau_{\ep,v^+}]} \|DJ_0(t v^+)v^+\|_{(W^{1,p}_0(\Omega))^*}|\tau_{\ep,v^+} - \tau_{0,v^+}|\\
\leq & \max \limits_{t\in [\tau_{0,v^+},\tau_{\ep,v^+}]} K\ep^2
\end{aligned}
$$  
and  
$$
\begin{aligned}
| J_0(\tau_{\varepsilon,v^-}v^-)-J_0(\tau_{0,v^-}v^-)| \leq & \max \limits_{t\in [\tau_{0,v^-},\tau_{\ep,v^-}]} \|DJ_0(t v^+)v^+\|_{(W^{1,p}_0(\Omega))^*}|\tau_{\ep,v^-} - \tau_{0,v^-}|\\
\leq & \max \limits_{t\in [\tau_{0,v^-},\tau_{\ep,v^-}]} \|DJ_0(t v^+)v^+\|_{(W^{1,p}_0(\Omega))^*} K\ep^2.
\end{aligned}
$$  

Consequently, using part {\it (c)} in the last remark, we find $C_r>0$ such that for any $\ep>0$ and any $v \in \left(B_{s}(u)\setminus\overline{B_{s'}(u)}\right)\cap \MM_{\ep}$
$$
\begin{aligned}
|J_\varepsilon(v)-J_0(\tau_{0,v^+}v^++\tau_{0,v^-}v^-)|\leq &|J_\varepsilon(v)-J_0(v)|+|J_0(v)- J_0(\tau_{0,v^+}v^++\tau_{0,v^-}v^-)|
\\
\leq  & \varepsilon^2\int_{\Omega}|\nabla v|^2 dx \\
& \hspace{0.5cm}+| J_0(\tau_{\varepsilon,v^+}v^++\tau_{\varepsilon,v^-}v^-)-J_0(\tau_{0,v^+}v^++\tau_{0,v^-}v^-)|\\
\leq & C_r \ep^2.
\end{aligned}
$$	

Therefore, there exists $\hat{\ep}_2>0$ such that for $\varepsilon\in (0,\hat{\ep}_2)$ small enough such that, \eqref{second ineq} is satisfied.
		
\medskip

\bigskip	
{\bf Proof of {\it (i)}-{\it (iii)}.} Let $\ep_0 >0$ small be as in Claim 3. For any $\ep \in (0,\ep_0)$, Claim 3 and Lemma \ref{Deformation} implies that the infimum in \eqref{approximating inf} is actually attained by a critical point $u_{\ep}\in \MM_{\ep}\cap B_s(u)$ of $J_\varepsilon$.  

\medskip
On the other hand, from Theorem \ref{Theorem2}, $m_{\ep}(u_{\ep})=2$, $u_{\ep}$ is sign changing and $u_{\ep}$ has exactly two nodal domains. To finish the proof of this step, take $s=s_n$ in \eqref{approximating inf}, with $s_n \to 0^+$ as $n\to \infty$. Applying Claim 3 and the previous argument to the sequence $\{s_n\}_{n\in \N}$, we find the desired sequence $\{u_{\varepsilon_n}\}_{n\in \N}$. This concludes the proof of this step.

\bigskip
{\bf Proof of {\it (iv).}} Using parts {\it (i)}-{\it (iii)} from this theorem, we can select an approximating sequence $\{u_{\ep_n}\}_{n\in \N}\subset \MM_{\ep_n}$ with $u_{\ep_n} \to u_0$ and $\ep_n \to 0^+$, as $n\to \infty$. The rest of the proof follows the same lines as in the proof of Theorem \ref{Theorem3} with only slight and obvious changes. This completes the proof of {\it (iv)} and thus the proof of the theorem.
$ \blacksquare$
\end{proof}  

{\bf Remark:} The ideas of the proof of Theorem \ref{Theorem3} can be easily addapted to show that limit points of least energy nodal solutions in which the energy functional behaves like in the case $\ep \neq 0$ also have Morse index 2. More precisely, if $ w \in \MM_{0}$ with $J_0(w)=\alpha_0$ and there is a sequence of $w_n \in \MM_{\ep_n}$ such that:
\begin{itemize}
	\item[(i)] $\ep_n \to 0$ (with $\ep_n$ possibly 0).
	\item[(ii)] $J_{\ep_n}(w_n)=\alpha_n$.
	\item[(iii)] $w_n \to w$ strongly in $W_0^{1,p}(\Omega)$.
	\item[(iv)] $D^2J(w_n)|_{V_n}\geq 0$ with $V_n$ defined as in Theorem \ref{Theorem3} (including the case $\ep_n=0$).
\end{itemize}
Then we have that $m_0(w)=2$. In particular, combining this idea with Theorem \ref{Theorem4} we conclude that least energy nodal solutions in $\MM_0$ which are limit points of isolated least energy nodal solutions in $\MM_0$ also have Morse index 2.
\bigskip


{\bf Acknowledgments.}

\end{document}